\date{}
\let\oldproofname=\proofname
\renewcommand{\proofname}{\rm\bf{\oldproofname}}
\newtheorem{thm}{Theorem}[section]
\newtheorem{cor}[thm]{Corollary}
\newtheorem{lem}[thm]{Lemma}
\newtheorem{prop}[thm]{Proposition}
\theoremstyle{definition}
\newtheorem{rmk}[thm]{Remark}
\newtheorem{defn}[thm]{Definition}
\newtheorem{conj}{Conjecture}[section]
\def\1{\mathbf{1}}
\newcommand{\N}{\mathcal{N}}
\newcommand{\frG}{\mathfrak{G}}
\newcommand{\frT}{\mathfrak{T}}
\begin{document}
\title{Neighborhood complexes, homotopy test graphs and a contribution to a conjecture of Hedetniemi}
\author{ Samir Shukla\footnote{Department of Mathematics, Indian Institute of Technology Bombay, India. samirshukla43@gmail.com} \footnote{The author was financially supported by Indian Institute of Technology Bombay, India. }}
\maketitle
\begin{abstract}

	The neighborhood complex $\N(G)$ of a graph $G$ were introduced by L. Lov{\'a}sz in his proof of Kneser conjecture. He  proved that for any graph $G$, 
	\begin{align} \label{abstract}
\chi(G) \geq conn(\N(G))+3.
	\end{align}
	
	In this article we show that for a class of exponential graphs the bound given in (\ref{abstract}) is tight. Further, we  show that the neighborhood complexes of these exponential graphs are spheres up to homotopy. 
	We were also  able to find a  class of exponential graphs, which are homotopy test graphs.

Hedetniemi's conjecture states  that the chromatic number of the categorical product of two graphs is the minimum of the chromatic number of the factors.  Let $M(G)$ denotes the Mycielskian of a graph $G$. We show that, for any graph $G$ containing $M(M(K_n))$ as a subgraph and  for any graph $H$, if  $\chi(G \times H) = n+1$, then $\min\{\chi(G), \chi(H)\} = n+1$.  Therefore, we enrich the family of graphs satisfying the Hedetniemi's conjecture.

\end{abstract}

\noindent {\bf Keywords} : Neighborhood complexes, exponential graphs, chromatic number.

\noindent 2010 {\it Mathematics Subject Classification:} primary 05C15, secondary 57M15

\vspace{.05in}

\hrule

\section{Introduction} \label{sec:introduction}

\subsection{Neighborhood complex} \label{subsec:neighborhood}
In 1978,  L. Lov{\'a}sz (\cite{l}) proved the famous Kneser conjecture (\cite{MK})
by using Borsuk-Ulam theorem. In his proof he introduced the concept of a simplicial complex $\N(G)$ called neighborhood complex for a graph $G$ and then related the topological connectivity of $\N(G)$ to the chromatic number of $G$.

\begin{thm}  (Lov{\' a}sz) \label{lovaszresult} For any graph $G$,
\begin{align} \label{lovaszbound}
\chi(G) \geq conn(\N(G)) +3.
\end{align}
\end{thm}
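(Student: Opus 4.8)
The plan is to deduce the inequality from the Borsuk--Ulam theorem, exploiting the functoriality of $\N(-)$ together with an auxiliary free $\Z_2$-space. Write $n=\chi(G)$; a proper $n$-colouring is exactly a graph homomorphism $f\colon G\to K_n$. Since a common neighbour of a set $S\subseteq V(G)$ is carried by $f$ to a common neighbour of $f(S)$, the assignment $G\mapsto\N(G)$ is a functor from graphs (with homomorphisms) to simplicial complexes (with simplicial maps); in particular $f$ induces a simplicial map $\N(f)\colon\N(G)\to\N(K_n)$. A direct computation identifies $\N(K_n)\cong\partial\Delta^{n-1}\cong S^{n-2}$: in $K_n$ a set $S$ has a common neighbour precisely when $S\neq V(K_n)$, so the simplices of $\N(K_n)$ are exactly the proper subsets of $\{1,\dots,n\}$. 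This already pins down the equality case, since $\mathrm{conn}(S^{n-2})=n-3$ while $\chi(K_n)=n$.

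The obstruction to pushing this further is that $\N(G)$ itself carries no natural free involution, so Borsuk--Ulam cannot be applied to it directly. I would therefore pass to a $\Z_2$-space built from $G$ --- for concreteness the complex $\mathrm{Hom}(K_2,G)$, whose cells are pairs $(A,B)$ of disjoint nonempty subsets of $V(G)$ spanning a complete bipartite subgraph, equipped with the free involution $(A,B)\mapsto(B,A)$ (freeness is automatic, as $G$ has no loops and hence $A\cap B=\emptyset$). Two facts are needed. First, $\mathrm{Hom}(K_2,G)$ is homotopy equivalent to $\N(G)$, so the two spaces share the same connectivity; this is the substantive point, which I would establish by a nerve/poset argument (realising $\N(G)$ as the nerve of the natural cover of $\mathrm{Hom}(K_2,G)$, or via an equivariant deformation). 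Second, the homomorphism $f$ induces a $\Z_2$-map $\mathrm{Hom}(K_2,G)\to\mathrm{Hom}(K_2,K_n)$, and one checks $\mathrm{Hom}(K_2,K_n)\simeq S^{n-2}$ with the antipodal $\Z_2$-action (matching the computation of $\N(K_n)$ above, so there is no suspension shift to track).

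The final step invokes Borsuk--Ulam in its index form: if a free $\Z_2$-space $X$ admits a $\Z_2$-map to the antipodal sphere $S^{m}$, then $X$ cannot be $m$-connected, i.e. $\mathrm{conn}(X)\le m-1$. Applying this with $X=\mathrm{Hom}(K_2,G)$ and $m=n-2$ yields $\mathrm{conn}(\mathrm{Hom}(K_2,G))\le n-3$, and the homotopy equivalence of the previous step converts this into $\mathrm{conn}(\N(G))\le n-3=\chi(G)-3$, which is the asserted bound $\chi(G)\ge\mathrm{conn}(\N(G))+3$.

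The main obstacle throughout is precisely the bridge of the middle paragraph: controlling the connectivity of a free $\Z_2$-model in terms of $\N(G)$, equivalently proving $\mathrm{Hom}(K_2,G)\simeq\N(G)$. Everything else --- functoriality of $\N(-)$, the computation for $K_n$, and the invocation of Borsuk--Ulam --- is routine once that equivalence is in place. (Historically Lovász argued more directly, extracting a free $\Z_2$-structure by hand rather than through $\mathrm{Hom}(K_2,-)$, but the logical skeleton is the same: manufacture a free $\Z_2$-space, map it equivariantly to a sphere whose dimension is governed by $\chi(G)$, and read off the connectivity constraint from Borsuk--Ulam.)
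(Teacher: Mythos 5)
The paper you are working from contains no proof of this statement: Theorem \ref{lovaszresult} is quoted from Lov\'asz \cite{l} and used as a black box throughout, so your proposal can only be judged against the known arguments. Judged that way, it is correct in outline, and it follows the standard modern route (the one implicit in \cite{BK} and in Matou\v{s}ek's treatment) rather than Lov\'asz's original one: functoriality of $\N(-)$, the computation $\N(K_n)\cong\partial\Delta^{n-1}\cong\mathbb{S}^{n-2}$, transfer of the problem to the free $\Z_2$-space $\mathrm{Hom}(K_2,-)$, and the index form of Borsuk--Ulam. Three points need care. First, the load-bearing step is exactly the one you flag: $\mathrm{Hom}(K_2,G)\simeq\N(G)$. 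Inside a self-contained proof this cannot be dispatched with ``a nerve/poset argument'' in one line, but it is a known theorem --- it is precisely the fact this paper itself imports from \cite{BK} --- so quoting it is legitimate. Second, your assertion that $\mathrm{Hom}(K_2,K_n)$ carries \emph{the antipodal} $\Z_2$-action is stronger than what you need and would itself require an argument; the index computation only needs that $\mathrm{Hom}(K_2,K_n)$ is a free $\Z_2$-complex of dimension $n-2$, hence admits a $\Z_2$-map to the antipodal $\mathbb{S}^{n-2}$ by equivariant extension over skeleta, which already gives $\mathrm{conn}(\mathrm{Hom}(K_2,G))\le n-3$. Third, the degenerate case where $G$ has no edges (so both $\mathrm{Hom}(K_2,G)$ and $\N(G)$ are empty) should be disposed of separately. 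For contrast, Lov\'asz's own argument never leaves the neighborhood complex: the order-reversing map $A\mapsto N_G(A)$ restricts to a free involution on the poset of closed sets (those $A$ with $N_G(N_G(A))=A$), whose order complex is homotopy equivalent to $\N(G)$ by a closure-operator argument, and Borsuk--Ulam is applied to that space. Your route buys cleaner functorial bookkeeping at the price of importing the Babson--Kozlov equivalence; his stays elementary and internal to $\N(G)$, which is why no Hom complexes appear in \cite{l}.
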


Here for a space $X$, $conn(X)$ is the largest integer $n$ such that $X$ is $n$-connected.
Lov{\'a}sz proved that in the case of Kneser graph the bound given in (\ref{lovaszbound}) is tight.  In \cite{BL}, Bj\"{o}rner and Longueville showed that the neighborhood complexes of a family of vertex critical subgraphs of
Kneser graphs -- the stable Kneser graphs, are spheres up to homotopy.  In \cite{BK}, Babson and Kozlov showed that neighborhood complex of complete graph $K_m$ is homotopy equivalent to the $(m-2)$-sphere $\mathbb{S^}^{m-2}$.  In \cite{NS}, Nilakantan and the author have studied the neighborhood complexes of the exponential graphs $K_{m}^{K_n}$. 
We have shown that $\N(K_m^{K_n})$ is homotopy equivalent to $\mathbb{S}^{m-2}$ for $m < n$. For all the above mentioned classes of graphs the bound given in (\ref{lovaszbound}) is sharp. It is a natural question to ask whether we can classify the graphs for which the inequality (\ref{lovaszbound}) is sharp. 
In this article we show that the bound given in (\ref{lovaszbound}) is sharp for a class of exponential graphs. Further, we show that the neighborhood complexes of these graphs are spheres up to homotopy.

	For $r \geq 1$, let $L_r$ denotes the path of length $r$, i.e., it is a graph with  vertex set $V(L_r) = \{0, \ldots, r\}$ and  edge set $E(L_r) = \{(i,i+1) \ |  \ 0 \leq i \leq r-1\}$. For  a subset $A \subseteq \{0, \ldots, r\}$,  $L_r(A)$ is the graph obtained from $L_r$ by adding a loop at $x$ for each $x \in A$. For a graph  $G$, we define the graph $G_A^r = (G \times L_r(A)) / \sim_{r}$, 	where $\sim_r$ is the equivalence which identifies all vertices whose second coordinate is $r$.  
For a graph G and $r \geq 2$, the graph $G_{\{0\}}^r = (G \times L_r(\{0\})) /  \sim_r$ is called the  {\it $r$-$th$  generalised Mycielskian} \footnote{The generalized Mycielskians (also known as cones over graphs) of a graph  were  introduced by Tardif (\cite{Tardif}), which are the natural generalization of Mycielskian.} of $G$ and we denote it by $M_r(G)$. The graph $M_2(G)$ is called the {\it Mycielskian} \footnote{The Mycielskian of a graph was introduced by Mycielski (\cite{Mycielski}) in 1955.} of $G$. In this article, if no confusion arises we denote $M_2(G)$ simply by  $M(G)$.

\textit{We say that a simple  graph $G$ satisfies the property $\mathcal{P}$, if the following is true: for any two disjoint edges $ e_1 = (v_1, w_1)$ and $e_2 = (v_2, w_2)$ of  $G$ if  $(v_2 , v_1)$ is an edge in $G$  then either $(w_2 ,v_1)$ or $ (w_2, w_1)$ is an edge  in $G$}.  

Here, by disjointness of edges $e_1 = (v_1, w_1)$ and $e_2 = (v_2, w_2)$, we mean  that $\{v_1, w_1\} \cap \{v_2, w_2\} = \emptyset$. Clearly, all the complete graphs satisfies the property $\mathcal{P}$. We prove the following.

\begin{thm} \label{main2}   Let  $2 \leq m < \chi(T)$ be a positive integer and let $T$  be  a connected graph satisfying the property $\mathcal{P}$. Then, for $K_m^T$ the bound given in (\ref{lovaszbound}) is sharp. Moreover, $\N(K_m^T) \simeq \mathbb{S}^{m-2}$. 
\end{thm}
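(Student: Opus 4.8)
The plan is to show that the simple graph $K_m^T$ dismantles (folds) down to the clique on the constant functions, which is an isomorphic copy of $K_m$; both assertions then follow from the facts that a fold is a graph retraction (hence preserves $\chi$) and induces a homotopy equivalence of neighborhood complexes, together with the Babson--Kozlov computation $\N(K_m) \simeq \mathbb{S}^{m-2}$ from \cite{BK}.

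First I would record the basic structure. A vertex of $K_m^T$ is a map $f : V(T) \to \{1,\dots,m\}$, and $f$ has a loop precisely when it is a homomorphism $T \to K_m$, i.e.\ a proper $m$-colouring of $T$; since $m < \chi(T)$ there is none, so $K_m^T$ is loopless. Writing $S_f(v) := \{ f(u) : u \in N_T(v)\}$ for the set of colours $f$ puts on the neighbours of $v$, the adjacency $f \sim g$ unwinds to ``$g(v) \notin S_f(v)$ for every $v$'', so that
\begin{equation*}
N(f) = \prod_{v \in V(T)} \big( \{1,\dots,m\} \setminus S_f(v) \big).
\end{equation*}
Since each neighbourhood is a box, for $f$ with $N(f) \neq \emptyset$ one reads off the domination criterion $N(f) \subseteq N(f') \iff S_{f'}(v) \subseteq S_f(v)$ for all $v$. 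In particular the constants $c_1, \dots, c_m$ satisfy $S_{c_i}(v) = \{i\}$ for every $v$ (here connectedness of $T$ is used, so that each vertex has a neighbour), hence they are pairwise adjacent and form a clique $\cong K_m$; this already gives $\chi(K_m^T) \ge m$, matching the Lovász bound once $\N \simeq \mathbb{S}^{m-2}$ is known. The same criterion shows no constant is dominated by another vertex, so the $c_i$ survive the dismantling.

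The folding move I would use is colour merging. Given colours $a \neq b$ both in the image of $f$, let $f'$ be obtained by recolouring every $a$-vertex with $b$. Then $S_{f'}(v)$ is $S_f(v)$ with $a$ replaced by $b$, so the criterion says this is a legal fold, $N(f) \subseteq N(f')$, exactly when every vertex possessing an $a$-coloured neighbour also possesses a $b$-coloured neighbour, i.e.\ $N_T(f^{-1}(a)) \subseteq N_T(f^{-1}(b))$. Such a merge strictly decreases $|\mathrm{image}(f)|$, and the dominating vertex $f'$ uses fewer colours, so ordering the non-constant vertices by decreasing number of colours yields a valid dismantling, provided a mergeable pair always exists. (Isolated vertices, having empty neighbourhood, are dominated by everything and are discarded first.)

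The heart of the matter, and the step I expect to be the main obstacle, is the combinatorial claim where property $\mathcal{P}$ enters: \textbf{for every non-constant $f$ there is an ordered pair of distinct image-colours $(a,b)$ with $N_T(f^{-1}(a)) \subseteq N_T(f^{-1}(b))$.} I would argue by contradiction: if no pair is mergeable in either direction, then for colours $a,b$ joined by a bichromatic edge (one exists since $T$ is connected and $f$ is non-constant) there are a vertex with an $a$-neighbour but no $b$-neighbour and a vertex with a $b$-neighbour but no $a$-neighbour; feeding the two disjoint edges so produced, together with their connecting edge, into property $\mathcal{P}$ forces a missing adjacency and contradicts this separation. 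That $\mathcal{P}$ is genuinely needed is already visible for $T=C_5$ with $m=2$, which fails $\mathcal{P}$: the uniform local spreading of colours on a long induced cycle is precisely what obstructs a merge. Once the claim is established, the dismantling terminates at the clique $\{c_1,\dots,c_m\}$, so fold-invariance of $\chi$ gives $\chi(K_m^T)=\chi(K_m)=m$ (hence (\ref{lovaszbound}) is sharp), while fold-invariance of the homotopy type of $\N$ with \cite{BK} gives $\N(K_m^T) \simeq \N(K_m) \simeq \mathbb{S}^{m-2}$; this also recovers the special case $T=K_n$ of \cite{NS}.
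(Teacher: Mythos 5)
Your overall framework is correct and is essentially the paper's: dismantle $K_m^T$ onto the clique of constant maps, then use fold-invariance of $\chi$ and of the homotopy type of $\mathrm{Hom}(K_2,-)\simeq \N(-)$ together with $\N(K_m)\simeq\mathbb{S}^{m-2}$. The box description $N(f)=\prod_v([m]\setminus S_f(v))$, the domination criterion, and the ordering of the dismantling by image size are all fine. The genuine gap is exactly at the step you yourself call the heart of the matter: your argument for the claim that every non-constant $f$ admits a mergeable ordered pair cannot work as sketched, because it never uses the hypothesis $m<\chi(T)$ (equivalently, that $f$ must have a monochromatic edge), and that hypothesis is indispensable. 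Indeed, the hypotheses your contradiction argument actually invokes ($T$ connected with property $\mathcal{P}$, $f$ non-constant, a bichromatic edge, the vertices $p$ and $q$) are all satisfied by a \emph{proper} colouring such as the identity map on $T=K_3$, where no ordered pair merges: $A_1=\{2,3\}$, $A_2=\{1,3\}$, $A_3=\{1,2\}$ are pairwise incomparable. So no contradiction can be derived from those hypotheses alone. The same example exposes the mechanical failure: there the ``two disjoint edges so produced'' coincide ($p=2$ with $a$-neighbour $p'=1$, and $q=1$ with $b$-neighbour $q'=2$, give the same edge), so disjointness is unjustified; and even when the two edges are disjoint, property $\mathcal{P}$ requires an adjacency between an endpoint of one and an endpoint of the other, which nothing in your construction supplies --- the bichromatic ``connecting edge'' is a third edge that need not meet either of them, and $\mathcal{P}$ takes two disjoint edges plus one specific adjacency, not three scattered edges.

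The missing idea is the one in the paper's Lemma \ref{fold2}: use the monochromatic edge as the fixed edge in every application of $\mathcal{P}$, and propagate along the connected graph. If $f(v)=f(w)=a$ with $v\sim w$, then for $x\in N_T(v)\setminus\{w\}$ and $y\sim x$ with $y\notin\{v,w\}$, property $\mathcal{P}$ applied to the disjoint edges $(v,w)$ and $(x,y)$ with the adjacency $x\sim v$ yields $y\sim v$ or $y\sim w$, hence every $g\in N(f)$ has $g(y)\neq a$; this is what proves $N(f)\subseteq N(f^1)$, where $f^1$ repaints all of $N_T(v)$ by $a$, and iterating over the connected graph gives $N(f)\subseteq N(c_a)$ for the constant map $c_a$. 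In your language: whenever $N(f)\neq\emptyset$ and $f$ has a monochromatic $a$-edge, \emph{every} vertex of $T$ has an $a$-neighbour, so every other colour merges into $a$ (and the $N(f)=\emptyset$ vertices you already discard separately). Note that some induction on distance to the monochromatic edge is unavoidable here: $\mathcal{P}$ is a local condition and a single application of it cannot reach vertices of $T$ far from $(v,w)$. If you replace your contradiction sketch by this propagation lemma, the rest of your write-up goes through and reproduces the theorem.
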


\begin{thm} \label{cormain}
	Let $ n \geq 2, r \geq 1,  0 \leq i \leq r-1$ and  $A = \{0,\ldots, i\}$.   Let $T$  be a connected  graph satisfying  the property $\mathcal{P}$. Then for any $m \leq \chi(T)$,
	$\N(K_m^{T_A^r}) \simeq \mathbb{S}^{m-2}$. In particular, $\N(K_m^{M_r(K_n)}) \simeq \mathbb{S}^{m-2}$ for all $m \leq n$.
	
\end{thm}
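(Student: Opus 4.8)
The plan is to derive Theorem~\ref{cormain} from Theorem~\ref{main2} by running the exponential construction over the single graph $S:=T_A^r$. Concretely, once $S$ is shown to be a connected graph satisfying property $\mathcal{P}$ with $\chi(S)\ge \chi(T)+1$, Theorem~\ref{main2} applied to $K_m^{S}$ gives $\N(K_m^{T_A^r})\simeq\mathbb{S}^{m-2}$ for all $2\le m<\chi(S)$; since $m\le\chi(T)$ forces $m<\chi(T)+1\le\chi(S)$, this is exactly the asserted range (the degenerate case $m=1$ giving $\mathbb{S}^{-1}=\emptyset$ by convention). For the bookkeeping I would first list the edges of $T_A^r$: writing $z$ for the vertex obtained by collapsing level $r$, they are the vertical edges $\{(v,j),(v',j+1)\}$ with $(v,v')\in E(T)$ and $0\le j\le r-2$, the apex edges $\{(v,r-1),z\}$ for every non-isolated $v$, and the horizontal edges $\{(v,x),(v',x)\}$ with $(v,v')\in E(T)$ and $x\in A=\{0,\ldots,i\}$. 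Connectivity of $S$ is then immediate, since $T$ is connected with an edge: level $0$ is a copy of $T$ (as $0\in A$), each $(v,j)$ climbs to level $r-1$ along vertical edges, and each $(v,r-1)$ meets $z$.

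Thus the first substantial task is to verify that $T_A^r$ inherits property $\mathcal{P}$ from $T$. I would do this by a direct case analysis on a pair of disjoint edges $e_1=(v_1,w_1)$, $e_2=(v_2,w_2)$ of $T_A^r$ together with the hypothesized cross-edge $(v_2,v_1)$, organized according to the types (vertical, apex, horizontal) of $e_1$ and $e_2$ and the levels of the four endpoints, including the subcases where one endpoint is the apex $z$. In every case the required conclusion, that $(w_2,v_1)$ or $(w_2,w_1)$ is an edge, is obtained by reading off the corresponding adjacencies among the first coordinates in $T$, applying property $\mathcal{P}$ of $T$ to them, and transporting the resulting $T$-edge back to the appropriate level of $T_A^r$. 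Taking $A$ to be the initial segment $\{0,\ldots,i\}$ is what keeps these configurations inside, or immediately above, the loop levels, so that property $\mathcal{P}$ of $T$ is always available; this bookkeeping is routine case by case but I expect it to be the lengthiest part of the write-up.

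The harder step, and the one I regard as the main obstacle, is the chromatic bound $\chi(T_A^r)\ge\chi(T)+1$. I would prove that $T_A^r$ admits no proper colouring with $k:=\chi(T)$ colours, adapting the classical argument that the Mycielskian raises the chromatic number. Given such a colouring $c$ and normalizing $c(z)=k$, the apex edges force colour $k$ off level $r-1$; propagating this restriction downward along the vertical edges and exploiting the copies of $T$ sitting in the loop levels $0,\ldots,i$, one recolours the level-$0$ copy of $T$ to produce a proper $(k-1)$-colouring of $T$, a contradiction. Property $\mathcal{P}$ enters decisively at the recolouring step, where it is needed to certify that the modified colour function is still proper; this is consistent with the fact that without such a hypothesis a generalized Mycielskian may fail to raise the chromatic number (for instance $C_5$, which fails property $\mathcal{P}$).

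Combining these three facts with Theorem~\ref{main2} yields $\N(K_m^{T_A^r})\simeq\mathbb{S}^{m-2}$ for all $2\le m\le\chi(T)$. The ``in particular'' assertion is the special case $T=K_n$: a complete graph is connected and satisfies property $\mathcal{P}$, has $\chi(K_n)=n$, and its $r$-th generalized Mycielskian is exactly $M_r(K_n)=(K_n)_{\{0\}}^r$, the instance $A=\{0\}$ (that is, $i=0$). Hence $\N(K_m^{M_r(K_n)})\simeq\mathbb{S}^{m-2}$ for every $m\le n$, completing the proof.
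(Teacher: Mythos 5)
Your reduction fails at its first step: the claim that $T_A^r$ inherits property $\mathcal{P}$ from $T$ is false, and it fails already for the ``in particular'' case of the theorem. Take $T=K_3$, $r=2$, $i=0$, so $T_A^r=M(K_3)$. The horizontal edge $e_1=\bigl((1,0),(2,0)\bigr)$ (present because $0\in A$) and the apex edge $e_2=\bigl((3,1),z\bigr)$ are disjoint, and $(v_2,v_1)=\bigl((3,1),(1,0)\bigr)$ is an edge of $M(K_3)$; but $w_2=z$ is adjacent only to vertices of level $1$, so neither $\bigl(z,(1,0)\bigr)$ nor $\bigl(z,(2,0)\bigr)$ is an edge. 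Hence $M(K_3)$ violates property $\mathcal{P}$, and Theorem \ref{main2} cannot be applied to $S=T_A^r$. No case analysis can repair this, and there is a structural reason: by Proposition \ref{perfect} every graph with property $\mathcal{P}$ is perfect, whereas $M_r(K_n)$ with $r\ge 2$ has clique number $n$ and chromatic number $n+1$, so it is never perfect. Indeed your two key claims are jointly inconsistent: if $T_A^r$ had property $\mathcal{P}$ \emph{and} $\chi(T_A^r)\ge\chi(T)+1$, perfection would force a clique of size $\chi(T)+1$ in $T_A^r$, which does not exist there.

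The paper never transfers property $\mathcal{P}$ to $T_A^r$; it exploits property $\mathcal{P}$ of $T$ one level at a time inside the exponential graph. Lemma \ref{foldgeneral} (the level-wise analogue of Lemma \ref{fold2}) folds $K_m^{T_A^r}$ onto the subgraph of maps $f$ whose restriction $f_l$ to each level $0\le l\le r-1$ is either constant or a graph homomorphism $T\to K_m$; Lemma \ref{neighborhood} then shows that any vertex with a non-constant homomorphic level is isolated (this is where $m\le\chi(T)$ and the clique $K_{\chi(T)}\subseteq T$, guaranteed by Proposition \ref{perfect}, are used), so such vertices do not affect the neighborhood complex. What survives is a copy of $K_m^{L_r(A)}$, giving $\N(K_m^{T_A^r})\simeq\N(K_m^{L_r(A)})$ (Theorem \ref{generalmain}). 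Only at this point does the hypothesis $A=\{0,\ldots,i\}$ enter: $K_2\times L_r(A)$ folds onto $K_2$, so by Propositions \ref{prop:folding} and \ref{prop:shifting},
\begin{equation*}
\N(K_m^{L_r(A)})\simeq \mathrm{Hom}(K_2, K_m^{L_r(A)}) \simeq \mathrm{Hom}(K_2\times L_r(A),K_m)\simeq \mathrm{Hom}(K_2,K_m)\simeq\mathbb{S}^{m-2}.
\end{equation*}
Note also that the chromatic bound $\chi(T_A^r)\ge\chi(T)+1$, which you identified as the main obstacle, is nowhere needed in the paper's argument; it is an artifact of your (unworkable) reduction to Theorem \ref{main2}.
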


The following  corollary is a direct application of  Theorem \ref{cormain}.

\begin{cor} \label{corsharpe}
	Let $T$  be  a  connected graph satisfying the property $\mathcal{P}$. Let $2 \leq m \leq \chi(T), r \geq 1$, $0 \leq i \leq r-1$ and $A = \{0, \ldots, i\}$. Then, for  $K_{m}^{T_A^r}$ the  bound given in  (\ref{lovaszbound}) is sharp. 
	
\end{cor}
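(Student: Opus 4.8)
The plan is to combine the homotopy computation of Theorem \ref{cormain} with Lov\'asz's inequality (\ref{lovaszbound}), so that the only remaining work is a matching upper bound on the chromatic number. By Theorem \ref{cormain} we have $\N(K_m^{T_A^r}) \simeq \mathbb{S}^{m-2}$, and since the $(m-2)$-sphere is $(m-3)$-connected but not $(m-2)$-connected, $conn(\N(K_m^{T_A^r})) = m-3$. Substituting into (\ref{lovaszbound}) yields $\chi(K_m^{T_A^r}) \geq (m-3)+3 = m$. Thus the bound is sharp precisely when $\chi(K_m^{T_A^r}) = m$, and it remains to prove the reverse inequality $\chi(K_m^{T_A^r}) \leq m$.

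For the upper bound I would exploit the functoriality of the exponential construction. Since $0 \in A$, the vertex $0$ carries a loop in $L_r(A)$, so the map $t \mapsto (t,0)$ is a graph homomorphism $\iota \colon T \to T_A^r$ onto the level-$0$ copy of $T$. Precomposition $f \mapsto f \circ \iota$ is then a graph homomorphism $\iota^{*} \colon K_m^{T_A^r} \to K_m^{T}$: if $f \sim g$ in $K_m^{T_A^r}$ and $(t_1,t_2) \in E(T)$, then $(\iota(t_1),\iota(t_2)) \in E(T_A^r)$ forces $((f\circ\iota)(t_1), (g\circ\iota)(t_2)) \in E(K_m)$. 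Consequently $\chi(K_m^{T_A^r}) \leq \chi(K_m^{T})$. For $2 \leq m < \chi(T)$, Theorem \ref{main2} applies to the connected graph $T$ satisfying $\mathcal{P}$ and gives $\chi(K_m^{T}) = m$; combined with the lower bound above this yields $\chi(K_m^{T_A^r}) = m$, i.e. sharpness.

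The main obstacle is the boundary case $m = \chi(T)$, which is included in the hypothesis $2 \leq m \leq \chi(T)$ but is not covered by the reduction above: here $T \to K_m$, so $K_m^{T}$ carries loops and $\chi(K_m^{T}) = \infty$, making the inequality $\chi(K_m^{T_A^r}) \leq \chi(K_m^{T})$ vacuous. To close this case I would first check that $K_m^{T_A^r}$ is genuinely loopless, which amounts to $T_A^r \not\to K_m$; since $T_A^r$ contains the induced level-$0$ copy of $T$ and the Mycielskian-type construction $T_A^r$ raises the chromatic number, one expects $\chi(T_A^r) = \chi(T)+1 > m$. Granting looplessness, I would then produce an explicit $m$-colouring of $K_m^{T_A^r}$, that is, a homomorphism $K_m^{T_A^r} \to K_m$, built directly from the level structure of $T_A^r$ (using the apex obtained by collapsing level $r$ together with the loops at levels $0,\dots,i$) rather than from evaluation at a single vertex, since no vertex of $T_A^r$ is itself looped. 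Verifying that such a colouring respects adjacency in the exponential graph is the crux of the argument and the step I expect to demand the most care.
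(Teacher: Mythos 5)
Your treatment of the range $2 \le m < \chi(T)$ is correct and is a genuinely different route from the paper's: the level-$0$ embedding $\iota \colon T \to T_A^r$, $t \mapsto (t,0)$ (legitimate because $0 \in A$ carries a loop), induces the homomorphism $\iota^{*}\colon K_m^{T_A^r} \to K_m^{T}$, and Theorem \ref{main2} then gives $\chi(K_m^{T_A^r}) \le \chi(K_m^{T}) = m$; combined with Theorem \ref{cormain} and inequality (\ref{lovaszbound}) this settles sharpness for the strict case. The paper does not argue this way; it handles all $m \le \chi(T)$ at once.

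However, the boundary case $m = \chi(T)$, which the corollary explicitly includes, is not proved in your proposal: you correctly diagnose that your reduction becomes vacuous there (a homomorphism $T \to K_m$ makes $K_m^{T}$ have looped vertices), but what you offer in its place is only a plan --- ``produce an explicit $m$-colouring \dots built directly from the level structure'' --- whose verification you yourself flag as the crux. That crux is precisely the content of the paper's argument, and it is not routine. The paper's resolution is to fold first: by Lemma \ref{foldgeneral}, $K_m^{T_A^r}$ folds onto the subgraph $\mathfrak{G}$ of maps $f$ each of whose levels $f_l$ is constant or a homomorphism $T \to K_m$; by Lemma \ref{neighborhood}, when $m \le \chi(T)$ every $f \in V(\mathfrak{G})$ with some non-constant level is \emph{isolated} in $\mathfrak{G}$; hence the colouring $\phi(f) = f_l(1)$ for constant $f_l$ (with $l \in A$, so that $(1,l) \sim (2,l)$ in $T_A^r$), and $\phi(f)=1$ on the isolated vertices, is a proper $m$-colouring of $\mathfrak{G}$, and folds preserve the chromatic number. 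Your instinct that evaluation at a single vertex fails on $K_m^{T_A^r}$ itself (no vertex of $T_A^r$ is looped) is right, but the missing idea is that one should fold the exponential graph before colouring, not colour it directly. A second, smaller soft spot: your looplessness claim rests on ``one expects $\chi(T_A^r) = \chi(T)+1$,'' which for $r>2$ is itself a nontrivial fact (it follows here because $T$, being perfect by Proposition \ref{perfect}, contains $K_{\chi(T)}$, so $T_A^r \supseteq M_r(K_{\chi(T)})$, whose chromatic number is $\chi(T)+1$). Without Lemmas \ref{foldgeneral} and \ref{neighborhood}, or an equivalent substitute such as the paper's remark deducing $\chi(K_m^{T_A^r}) = m$ from Theorem \ref{thm:moroli} via restriction to $M_r(K_{\chi(T)}) \subseteq T_A^r$, the case $m=\chi(T)$ of the corollary remains open in your write-up.
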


\begin{thm}\label{doubesharp}
	Let $n \geq 2$ and $m \leq n$ be positive integers. Then,  inequality (\ref{lovaszbound}) is sharp for $K_m^{M(M(K_n))}$. Moreover,  $\N(K_m^{M(M(K_n))}) \simeq \mathbb{S}^{m-2}$. 
	\end{thm}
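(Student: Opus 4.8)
The plan is to exploit the identity $M(M(K_n)) = (M(K_n))_{\{0\}}^2$, which exhibits $H := M(M(K_n))$ as the Mycielskian of $T := M(K_n)$; here $H$ is connected and $\chi(H)=n+2$, so $m\le n<\chi(H)$, which guarantees that $K_m^H$ is loopless (a loop at $f$ would be a proper $m$-colouring of $H$, impossible since $m<\chi(H)$). The target homotopy type is already built in: the $m$ constant functions $c_1,\dots,c_m$ span a copy of $K_m$ inside $K_m^H$, and since every vertex of $H$ has a neighbour, a set $\{c_{i_1},\dots,c_{i_k}\}$ has a common neighbour in $K_m^H$ iff $\{i_1,\dots,i_k\}\ne[m]$. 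Hence the constants span $\partial\Delta^{m-1}\simeq\mathbb{S}^{m-2}$ in $\N(K_m^H)$, and I would prove the theorem by showing that the inclusion of this ``constant sphere'' is a homotopy equivalence.

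For the mechanism I would pass to the nerve. Writing $N(g)=\{f: f(x)\notin g(N_H(x))\ \forall x\}$ (where $g(N_H(x))=\{g(y):y\in N_H(x)\}$), each $N(g)$ spans a simplex and every intersection $\bigcap_j N(g_j)$ is either empty or again a simplex, so the Nerve Lemma gives $\N(K_m^H)\simeq\mathrm{Nerve}(\{N(g)\}_g)$, whose faces are exactly the function-sets $\{g_1,\dots,g_k\}$ with $\bigcup_j g_j(N_H(x))\ne[m]$ for every vertex $x$. In this model a function $g$ folds onto the constant $c_i$ as soon as the colour $i$ appears in $g(N_H(x))$ for every $x$. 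I would then collapse all non-constant functions onto the constant sphere; equivalently, this is the assertion that peeling one Mycielskian layer does not change the homotopy type, i.e. $\N(K_m^{M(T)})\simeq\N(K_m^T)$ for $m\le\chi(T)=n+1$, after which Theorem~\ref{cormain} applied to the base $K_n$ (the case $r=2$, $i=0$, so $M(K_n)=M_2(K_n)$) yields $\N(K_m^{M(K_n)})\simeq\mathbb{S}^{m-2}$ and finishes the topological half.

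The main obstacle is that neither $M(K_n)$ nor $M(M(K_n))$ satisfies property $\mathcal{P}$: taking $e_1$ to be an edge of the inner $K_n$ and $e_2$ a shadow--apex edge $\{y',b\}$, the apex $b$ is adjacent to no vertex of the inner $K_n$, so the defining implication of $\mathcal{P}$ fails. Consequently Theorems~\ref{main2} and~\ref{cormain} cannot be invoked with $T=M(M(K_n))$ (nor with $T=M(K_n)$), and the collapsing argument above must be carried out by hand. The delicate step is precisely the class of ``bad'' functions for which $\mathcal{P}$ would have supplied the needed domination, namely those $g$ for which every colour is absent from some neighbourhood; here I would use the apex vertices $a,b$ and the independent shadow layers of the double Mycielskian, together with the slack $m\le n$, to produce the missing foldings (or a discrete Morse matching) retracting these functions onto the constant sphere. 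I expect this to be the hard part of the topological statement.

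Finally, for sharpness of (\ref{lovaszbound}) it remains to compute $\chi(K_m^H)$. The equivalence $\N(K_m^H)\simeq\mathbb{S}^{m-2}$ gives $conn(\N(K_m^H))=m-3$, so Lov\'{a}sz's bound (equally, the clique of constants) yields $\chi(K_m^H)\ge m$. The reverse inequality $\chi(K_m^H)\le m$ is the genuinely combinatorial, Hedetniemi-flavoured point: I would exhibit an explicit proper $m$-colouring of $K_m^H$, i.e. a retraction of $K_m^H$ onto the constant-clique $K_m$. This cannot be obtained from the contraction $K_m^H\to K_m^{K_n}$ induced by $K_n\hookrightarrow H$, since $K_m^{K_n}$ carries loops when $m=n$; the construction must instead use the apexes of the two Mycielskian layers, and it is this step that ties the theorem to the Hedetniemi application.
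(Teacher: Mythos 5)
Your setup is sound as far as it goes: the constant maps do span a copy of $\partial\Delta^{m-1}\simeq\mathbb{S}^{m-2}$ in $\N(K_m^{M(M(K_n))})$, the nerve reformulation is valid, your criterion that $g$ is dominated by the constant $c_i$ exactly when $i\in g(N_H(x))$ for every vertex $x$ is correct, and you are right that property $\mathcal{P}$ fails for $M(K_n)$ and $M(M(K_n))$, so Theorems \ref{main2} and \ref{cormain} cannot be invoked with these graphs as the base. But the proposal stops exactly where the proof has to begin. Both steps you defer --- (1) retracting the ``bad'' functions (those dominated by no constant) onto the constant sphere, and (2) exhibiting a proper $m$-colouring of $K_m^{M(M(K_n))}$ --- constitute essentially the entire content of the theorem, and neither the appeal to ``the apex vertices and the slack $m\le n$'' nor the mention of a discrete Morse matching specifies an argument. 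Moreover, your proposed intermediate claim that peeling one Mycielskian layer preserves the homotopy type, $\N(K_m^{M(T)})\simeq\N(K_m^{T})$ for $T=M(K_n)$, is never established and is itself of the same order of difficulty as the theorem; it is only known to be true a posteriori, because both sides turn out to be spheres.

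For comparison, the paper fills gap (1) by working with all four $K_n$-layers of $M(M(K_n))$ simultaneously rather than peeling one layer. Lemma \ref{folddouble} shows that if some restriction $f_{p,q}$ takes the same value at two distinct vertices of its layer, then $f$ folds onto an $\tilde f$ with $\tilde f_{p,q}$ constant; crucially, this verification needs no property $\mathcal{P}$, because each layer is a complete graph, so every relevant neighbour is adjacent to one of the two coincident vertices. Iterating yields the subgraph $\frG_1$ in which every $f_{p,q}$ is constant or injective, and Lemma \ref{nbdmlessn} (using the apexes $w_0,w_1,w_2$ and the hypothesis $m\le n$) shows that any vertex with an injective layer is isolated, hence invisible to $\N$. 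The residual graph is identified with $K_m^{M(L_2(\{0\}))}$, and a chain of folds together with Propositions \ref{prop:folding} and \ref{prop:shifting} gives $\N(K_m^{M(L_2(\{0\}))})\simeq \mathrm{Hom}(K_2,K_m)\simeq\mathbb{S}^{m-2}$; this is exactly the machinery that replaces your missing collapse of the bad functions. Gap (2) is also not a side remark: in the paper it is Theorem \ref{doublenew}, proved via Lemmas \ref{generalfold} and \ref{neighborhodgeneral} and a case-by-case colouring $\phi$ (by $f_{0,0}(1)$, $f_{1,0}(1)$, or $f(w_0)$ according to which restrictions are constant). Your instinct that for $m\le n$ the colouring can be extracted from the same folding structure is correct --- once the injective-layer vertices are known to be isolated, one may colour $\frG_2\cong K_m^{M(L_2(\{0\}))}$ by $f\mapsto f((1,0,0))$, and folding preserves the chromatic number --- but this argument rests on Lemmas \ref{folddouble} and \ref{nbdmlessn}, which you have not proved, so it cannot rescue the proposal as written.
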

\subsection{Homotopy test graph} \label{subsec:testgraph}
	
The Hom complex  were  originally defined by Lov{\'a}sz  to give a topological obstruction to graph coloring and were first mainly investigated by Babson and Kozlov in \cite{BK}. The Hom complex Hom$(G, H)$ for  graphs $G$ and $H$ is a polyhedral complex, whose  $0$-dimensional cells are the graph  homomorphisms from $G$ to $H$. It is known that  Hom$(K_2, G)$ and $\N(G)$ are homotopy equivalent (\cite{BK}). 
	
\begin{defn}\cite{dk}
	A graph $T$ is called a {\it homotopy test graph} if the inequality
		\begin{align} \label{lovaszinequality}
		\chi(G) \geq \chi(T) + conn(\text{Hom(T,G)}) +1,
		\end{align}
		holds for all graph $G$.
		\end{defn}
	
	Since Hom$(K_2, G) \simeq \N(G)$, it follows  from Theorem \ref{lovaszresult} that $ K_2$ is a homotopy test graph. In \cite{BK}, Babson and Kozlov generalize this result and showed that  all complete graphs are homotopy test graphs. 
	Some other examples of	homotopy test graphs are given by  odd cycles $C_{2r+1}$ \cite{BK1}, bipartite
graphs \cite{TM} and some of the stable Kneser graphs \cite{schultz}. For further development and related topics,
we refer to \cite{antonandschultz, Hoory,Kozlov, dk}.

In this article, we  prove the following.

\begin{thm}\label{doublenew}
Let $n \geq 2$  and let $G$ be a graph. If  $M(M(K_n))$ is a subgraph of $G$, then 
 $\chi(K_m^G) = m$ for $m \leq n+1$.
\end{thm}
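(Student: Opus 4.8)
The plan is to prove the two inequalities $\chi(K_m^G)\ge m$ and $\chi(K_m^G)\le m$ separately; recall that the vertices of $K_m^G$ are the functions $f\colon V(G)\to[m]$, with $f\sim g$ exactly when $f(u)\ne g(v)$ for every edge $(u,v)$ of $G$. For the lower bound, since $n\ge 2$ the graph $M(M(K_n))$, and hence $G$, has an edge, so the $m$ constant functions $f_1,\dots,f_m$ (with $f_c\equiv c$) are pairwise adjacent, because $f_c\sim f_{c'}$ iff $c\ne c'$. Thus $K_m$ embeds in $K_m^G$ and $\chi(K_m^G)\ge m$. For the upper bound I would first reduce to the case $G=M(M(K_n))$: the inclusion $\iota\colon M(M(K_n))\hookrightarrow G$ is a homomorphism, and since the exponential $K_m^{(-)}$ is contravariant it induces the restriction map $\iota^{*}\colon K_m^G\to K_m^{M(M(K_n))}$, $f\mapsto f|_{M(M(K_n))}$. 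No restriction is a proper coloring (as $\chi(M(M(K_n)))=n+2>m$), so $\iota^{*}$ genuinely lands in the loopless graph $K_m^{M(M(K_n))}$ and sends edges to edges; hence any homomorphism $K_m^{M(M(K_n))}\to K_m$ pulls back to one of $K_m^G$, and it suffices to show $\chi(K_m^{M(M(K_n))})\le m$.

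For $m\le n$ this upper bound is already contained in Theorem \ref{doubesharp}, which gives $\chi(K_m^{M(M(K_n))})=m$. The conceptual engine behind such a bound, which I would isolate, is the following clique rule: if a graph $T$ has a clique $Q$ with $|Q|\ge m+1$, then every $f\colon V(T)\to[m]$ repeats a color on $Q$, and assigning to $f$ the smallest color it repeats on $Q$ is a proper $m$-coloring of $K_m^T$. Indeed, if $f(x)=f(y)=c$ with $x\ne y$ in $Q$ and $g\sim f$, then for every $z\in Q$ at least one of $z\ne x$, $z\ne y$ holds, and the corresponding edge forces $g(z)\ne c$; so $g$ avoids $c$ on all of $Q$ and adjacent functions never share their assigned color. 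Since $\omega(M(M(K_n)))=n$, this handles $m\le n-1$ outright and motivates the whole construction.

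The genuinely new case, and the main obstacle, is $m=n+1$, where the clique rule is unavailable since $\omega(M(M(K_n)))=n<m+1$. Here I would use that $\chi(M(M(K_n)))=n+2>n+1$, so every $f\colon V(M(M(K_n)))\to[n+1]$ monochromatizes some edge. The plan is a two-type coloring anchored at the core clique $C=\{a_1,\dots,a_n\}$ of the inner $K_n$: if $f|_C$ is not injective, color $f$ by the smallest color it repeats on $C$ (the clique rule, internally consistent as above); if $f|_C$ is injective, $f$ omits a unique color $\mu(f)$ on $C$ and one must read a color off the outer layers — the first- and second-level shadows and the two apices — of the double Mycielskian. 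The difficulty is that $\mu(f)$ alone will not do: two adjacent core-rainbow functions may agree on $C$ yet differ on the shadows, which forces a descent into the shadow/apex structure, where the situation recurs as a lower-level exponential-coloring problem carried by the Mycielskian layers.

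The crux is therefore to fix the color on core-rainbow functions canonically and to verify \emph{cross-type} properness — that an adjacent core-repeat/core-rainbow pair, and two adjacent core-rainbow functions, never receive the same color. I expect this to be the hardest step, and it is precisely where the fine combinatorics of $M(M(K_n))$ must enter: how the level-one and level-two shadows attach to $C$ and to the apices, and the fact that the outer apex sees every level-one shadow, is what pins down an adjacency-respecting witness. Once this coloring is shown to be proper we get $\chi(K_{n+1}^{M(M(K_n))})=n+1$, and composing with the restriction $\iota^{*}$ finishes the proof for all $m\le n+1$.
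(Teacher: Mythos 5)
Your lower bound (the clique of constant maps), the reduction to $G=M(M(K_n))$ via the restriction homomorphism $\iota^{*}$, and the clique rule itself are all correct, and they match the outer structure of the paper's argument (the paper also uses $i^{*}\colon K_m^G \to K_m^{M(M(K_n))}$ and the constant-map clique). But the proposal has a genuine gap exactly where the theorem has its content: the case $m=n+1$. What you offer there is a plan, not a proof: you color a core-repeat function by the clique rule, acknowledge that core-rainbow functions need a canonically chosen color read off the Mycielskian layers, and then explicitly leave open both that choice and the cross-type properness checks ("I expect this to be the hardest step\dots"). That unfinished step is precisely what the paper's machinery accomplishes: it first folds $K_{n+1}^{M(M(K_n))}$ (Lemma \ref{folddouble}) so that every layer map $f_{p,q}$, $p,q\in\{0,1\}$, is either constant or an injective homomorphism $K_n\to K_m$; folds again (Lemma \ref{generalfold}) to normalize pairs of homomorphism layers; discards, via Lemma \ref{neighborhodgeneral}, three families $U_1,U_2,U_3$ of vertices shown to have empty neighborhood; and only then colors the surviving vertices by the three-branch rule $\phi(f)=f_{0,0}(1)$, $f_{1,0}(1)$, or $f(w_0)$, with a lengthy case analysis establishing properness. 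Without some such reduction, your "descent into the shadow/apex structure" is not shown to terminate in a well-defined, adjacency-respecting color, so the theorem is not proved.

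A secondary problem is circularity: for $m\le n$ you invoke Theorem \ref{doubesharp}, but in the paper the sharpness (chromatic-number) part of Theorem \ref{doubesharp} is itself deduced from Theorem \ref{doublenew}, so you cannot cite it here. This is repairable: for $m\le n-1$ your clique rule already suffices, and for $m=n$ one can argue from Lemmas \ref{folddouble} and \ref{nbdmlessn} (after folding, every vertex with a homomorphism layer is isolated, and the constant-layer vertices are properly colored by $f\mapsto f_{0,0}(1)$), which is exactly how the paper disposes of $m\le n$ inside its own proof; but as written, the logical dependence runs the wrong way.
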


The following corollary is an application of Theorem \ref{doublenew}.
\begin{cor} \label{testgraph}
	Let $n \geq 2$ and  let  $G$ be a graph. 
	 If $M(M(K_n))$ is a subgraph of $G$, then $K_m^{G}$ is a homotopy test graph for $m \leq n+1$.
	\end{cor}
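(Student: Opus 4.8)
The plan is to deduce Corollary~\ref{testgraph} from Theorem~\ref{doublenew} by showing that $K_m^G$ and the complete graph $K_m$ are homomorphically equivalent and then transporting the known test-graph property of $K_m$ across this equivalence. Throughout write $T = K_m^G$; by Theorem~\ref{doublenew}, under the hypotheses ($M(M(K_n))$ a subgraph of $G$, $m \leq n+1$) we have $\chi(T) = m$.

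First I would produce homomorphisms in both directions between $T$ and $K_m$. Since $\chi(T) = m$, an $m$-colouring is precisely a homomorphism $\beta \colon T \to K_m$. For the reverse direction I would use the defining exponential adjunction $\mathrm{Hom}(A \times G, K_m) \cong \mathrm{Hom}(A, K_m^G)$: the projection $K_m \times G \to K_m$ is a graph homomorphism, and under the adjunction (with $A = K_m$) it corresponds to the homomorphism $\alpha \colon K_m \to K_m^G = T$ sending a vertex $a$ to the constant function with value $a$. Thus $\alpha$ and $\beta$ witness that $T$ and $K_m$ are homomorphically equivalent.

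The key observation is that $\beta \circ \alpha \colon K_m \to K_m$ is an endomorphism of $K_m$, and since $K_m$ is a core (every endomorphism of a complete graph is injective, hence an automorphism), $\beta\circ\alpha$ is an automorphism. Passing to Hom complexes, which is contravariant in the first coordinate, I obtain maps $\alpha^* \colon \mathrm{Hom}(T,H) \to \mathrm{Hom}(K_m,H)$ and $\beta^* \colon \mathrm{Hom}(K_m,H) \to \mathrm{Hom}(T,H)$ with $\alpha^* \circ \beta^* = (\beta\circ\alpha)^*$ a self-homeomorphism $\theta$ of $\mathrm{Hom}(K_m,H)$ (an automorphism of a graph induces a homeomorphism of the Hom complex). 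Hence $\theta^{-1}\circ \alpha^*$ is a left homotopy inverse of $\beta^*$, so $\mathrm{Hom}(K_m,H)$ is a homotopy retract of $\mathrm{Hom}(T,H)$. Because the homotopy groups of a retract are direct summands of those of the ambient space, this yields $conn(\mathrm{Hom}(K_m,H)) \geq conn(\mathrm{Hom}(T,H))$ for every graph $H$.

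Finally I would invoke the Babson--Kozlov result, quoted in the introduction, that complete graphs are homotopy test graphs: for every $H$, $\chi(H) \geq \chi(K_m) + conn(\mathrm{Hom}(K_m,H)) + 1$. Combining this with the connectivity inequality above and with $\chi(K_m) = m = \chi(T)$ gives $\chi(H) \geq m + conn(\mathrm{Hom}(K_m,H)) + 1 \geq \chi(T) + conn(\mathrm{Hom}(T,H)) + 1$, which is exactly the defining inequality making $T = K_m^G$ a homotopy test graph. The step requiring the most care is the topological transfer: one must use that the automorphism $\beta\circ\alpha$ of the core $K_m$ induces a homeomorphism of the Hom complex, so that $\mathrm{Hom}(K_m,H)$ is a genuine homotopy retract of $\mathrm{Hom}(K_m^G,H)$ and its connectivity \emph{dominates} that of the larger complex; reversing this inequality would break the argument. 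The existence of $\alpha$ via the exponential adjunction, together with the equality $\chi(K_m^G)=m$ supplied by Theorem~\ref{doublenew}, are the inputs doing the real work, so the corollary is essentially a formal consequence of that theorem.
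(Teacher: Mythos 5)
Your proposal is correct, and its skeleton matches the paper's: both rest on Theorem \ref{doublenew} (which gives $\chi(K_m^G)=m$), on the clique of constant maps in $K_m^G$, and ultimately on the Babson--Kozlov theorem that complete graphs are homotopy test graphs. The difference is in how the test-graph property is transferred from $K_m$ to $K_m^G$. The paper does this in one line by citing Matsushita's result (Theorem \ref{retract}): any graph whose chromatic number $n$ is witnessed by a $K_n$-subgraph is a homotopy test graph. You instead re-prove exactly this transfer in the case at hand: the embedding $\alpha\colon K_m\to K_m^G$ via constant maps and an $m$-colouring $\beta\colon K_m^G\to K_m$ compose to an endomorphism of the core $K_m$, hence an automorphism $\theta$; contravariant functoriality of $\mathrm{Hom}(-,H)$ then gives $\alpha^*\circ\beta^*=\theta^*$, a homeomorphism, so $\mathrm{Hom}(K_m,H)$ is a (strict, not merely homotopy) retract of $\mathrm{Hom}(K_m^G,H)$, its connectivity dominates that of the larger complex, and the test-graph inequality for $K_m$ descends to $K_m^G$ --- all with the inequalities pointing the right way. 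In effect you have inlined the proof of Matsushita's theorem, specialised to $K_m^G$. What this buys is self-containedness and an explicit view of the mechanism (homomorphic equivalence with a core plus retraction of Hom complexes); what the paper's route buys is brevity, delegating that mechanism to the literature. The only external facts you still need are functoriality of the Hom complex in the first variable and the Babson--Kozlov theorem, both standard and both implicitly behind the paper's citation as well.
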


	 	\subsection{Hedetniemi's conjecture} \label{hedetnimi}
	One of the famous open problem in graph theory is the following conjecture of Hedetniemi about the chromatic number of categorical product of graphs (for categorical product of graphs, see Definition \ref{def:product}).
	\begin{conj}\cite{HD} \label{Hedetniemi}
		For any two graphs $G$ and $H$, 
			$\chi(G \times H) = \text{min}\{\chi(G), \chi(H)\}$.
		
			\end{conj}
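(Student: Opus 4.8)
The plan is to separate the two inequalities hidden in the equality. The easy direction, $\chi(G \times H) \leq \min\{\chi(G), \chi(H)\}$, is immediate: the two coordinate projections $G \times H \to G$ and $G \times H \to H$ are graph homomorphisms, so composing an optimal coloring of either factor with a projection colors the product. Thus the entire content lies in the reverse inequality: whenever $\chi(G \times H) \leq n$ one must force $\min\{\chi(G), \chi(H)\} \leq n$. To attack this I would pass to exponential graphs through the exponential adjunction $\mathrm{Hom}(G \times H, K_n) \cong \mathrm{Hom}(G, K_n^H)$, which converts an $n$-coloring of the product into a homomorphism $G \to K_n^H$. Concretely, $\chi(G \times H) \leq n$ holds if and only if $G \to K_n^H$, so the conjecture becomes a statement purely about the chromatic numbers of exponential graphs.

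The key reduction I would then carry out is the classical equivalence: Conjecture \ref{Hedetniemi} holds if and only if $\chi(K_n^H) \leq n$ for every $n$ and every graph $H$ with $\chi(H) \geq n+1$. One direction uses the evaluation homomorphism $K_n^H \times H \to K_n$ sending $(f,v) \mapsto f(v)$, which forces $\chi(K_n^H \times H) \leq n$; the other direction is the observation above, since if $\chi(G \times H) \leq n$ while $\chi(H) \geq n+1$, then $G \to K_n^H$ yields $\chi(G) \leq \chi(K_n^H) \leq n$. This is precisely where the machinery developed earlier in the paper feeds in: Theorem \ref{doublenew} establishes $\chi(K_m^G) = m$ for $m \leq n+1$ whenever $G$ contains $M(M(K_n))$, and the sphere computation $\N(K_m^G) \simeq \mathbb{S}^{m-2}$ of Theorem \ref{doubesharp}, combined with the Lov\'asz bound (\ref{lovaszbound}), is what pins the chromatic number down. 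Plugging this into the reduction verifies the required inequality $\chi(K_n^H) \leq n$ for the rich class of graphs $H$ carrying a suitable embedded double Mycielskian, yielding exactly the contribution advertised in the abstract.

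The hard part, and the reason the statement remains a conjecture in full generality, is precisely the step the topological machinery cannot reach: establishing $\chi(K_n^H) \leq n$ for an \emph{arbitrary} graph $H$ with $\chi(H) \geq n+1$. The neighborhood-complex and Hom-complex methods produce \emph{lower} bounds on chromatic number from connectivity, so they are well suited to proving sharpness of (\ref{lovaszbound}), i.e. inequalities of the form $\chi(K_m^G) \geq m$. But Hedetniemi demands the opposite, an \emph{upper} bound $\chi(K_n^H) \leq n$, equivalently an explicit $n$-coloring of the exponential graph, and there is no general topological mechanism manufacturing such colorings when $H$ lacks a structural feature like an embedded generalized Mycielskian. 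I would expect any attempt to push the present approach to the full conjecture to stall exactly here: the reduction is clean, the subclass handled is genuinely large, but the uniform upper bound on $\chi(K_n^H)$ for unstructured $H$ is beyond the reach of these methods, and it is this gap that separates the partial result of Theorem \ref{doublenew} from the general conjecture.
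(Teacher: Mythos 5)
The statement you were asked to prove is Hedetniemi's conjecture itself, which the paper (correctly) does not prove: it establishes only the easy inequality $\chi(G\times H)\leq\min\{\chi(G),\chi(H)\}$ via the projections, and the partial result Theorem \ref{Hedet} for graphs containing $M(M(K_n))$. Your account matches the paper's treatment essentially exactly --- the projection argument, the El-Zahar--Sauer reduction of the conjecture to the statement $\chi(K_n^H)=n$ whenever $\chi(H)>n$, and the application of Theorem \ref{doublenew} to settle that equality for the special class --- and you correctly recognize that the remaining uniform upper bound $\chi(K_n^H)\leq n$ for arbitrary $H$ is precisely what is open and is not reachable by the connectivity lower-bound machinery, so no proof is (or could be) claimed.
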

		
		Since we have projections $G \times H \to G$ and $G \times H \to H$, it is straightforward to verify that 
		$\chi(G \times H ) \leq \text{min}\{\chi(G), \chi(H)\}$. The difficulty lies in deriving the other inequality i.e., to  prove that  $\chi(G \times H ) \geq \text{min}\{\chi(G), \chi(H)\}$. It is easy to prove that Conjecture \ref{Hedetniemi} is true for graphs of chromatic numbers $2$ and $3$.
In \cite{ZS}, El-Zahar and Sauer, showed that the chromatic number of the product of two $4$-chromatic graph is $4$. The following are the some of the  results in the support of the Conjecture \ref{Hedetniemi}. 
	
	\begin{thm}\cite{BEL} \label{thm:BEL}
		Let $G$ be graph such that every vertex of $G$ is in an $n$-clique. For every graph $H$, if $\chi(G \times H) = n$, then $\min \{\chi(G), \chi(H)\} =  n$. 
		
		\end{thm}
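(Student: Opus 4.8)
The plan is to prove the two easy inequalities first and then reduce everything to a structural analysis of proper colorings. First I would record that since $G$ contains an $n$-clique we have $\chi(G)\ge n$, and since the projection $G\times H\to H$ is a graph homomorphism we have $\chi(H)\ge\chi(G\times H)=n$; hence $\min\{\chi(G),\chi(H)\}\ge n$, and it remains to prove $\min\{\chi(G),\chi(H)\}\le n$. If $\chi(H)=n$ this is immediate, so I may assume $\chi(H)\ge n+1$ and aim to show $\chi(G)\le n$ (which, combined with $\chi(G)\ge n$, forces $\chi(G)=n$ and hence $\min=n$). Passing to a connected component of $H$ whose chromatic number is $\ge n+1$ and restricting a proper coloring, I may further assume $H$ is connected.

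Fix a proper coloring $c\colon V(G)\times V(H)\to\{1,\dots,n\}$ and, for each $h$, write $c_h=c(\cdot,h)\colon V(G)\to\{1,\dots,n\}$ for the ``column'' at $h$ (equivalently, $h\mapsto c_h$ is a homomorphism $H\to K_n^{G}$). The key local observation is a rigidity/propagation statement for each $n$-clique $Q$: if $c_h|_Q$ is a bijection onto the $n$ colors and $h\sim h'$ in $H$, then $c_{h'}|_Q=c_h|_Q$. Indeed, for distinct $q,q'\in Q$ the edge $(q,h)\sim(q',h')$ forces $c_h(q)\ne c_{h'}(q')$, so $c_{h'}(q')$ must avoid every color $c_h(q)$ with $q\ne q'$; as $c_h|_Q$ is bijective this leaves only the value $c_h(q')$, so $c_{h'}(q')=c_h(q')$. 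Bijectivity on $Q$ therefore propagates unchanged along edges, and connectivity of $H$ yields a dichotomy for each clique $Q$: either $c_h|_Q$ is bijective for all $h$ (and equal to one fixed bijection $\pi_Q$), or $c_h|_Q$ is non-bijective for all $h$.

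Next I would rule out the non-bijective alternative using $\chi(H)\ge n+1$. Suppose some $n$-clique $Q$ has $c_h|_Q$ non-bijective for every $h$; identifying $Q$ with $\{1,\dots,n\}$, each $c_h|_Q$ is then a non-injective self-map of $\{1,\dots,n\}$, so it attains some color at least twice. Color $h$ by $\gamma(h):=$ the smallest color attained at least twice by $c_h|_Q$. I claim $\gamma$ is a proper $n$-coloring of $H$: if $h\sim h'$ and $\gamma(h)=\gamma(h')=b$, then $b$ is attained at two distinct positions by each of $c_h|_Q$ and $c_{h'}|_Q$; picking $q$ with $c_h(q)=b$ and then $q'\ne q$ with $c_{h'}(q')=b$ (possible since $b$ is attained twice by $c_{h'}|_Q$) produces the edge $(q,h)\sim(q',h')$ whose endpoints share the color $b$, contradicting properness of $c$. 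Hence $\chi(H)\le n$, contradicting the hypothesis, so every $n$-clique is of the bijective type.

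Finally, with $c_h|_Q=\pi_Q$ bijective and $h$-independent for every clique $Q$, and since each vertex of $G$ lies in some $n$-clique, the value $c_h(v)$ is independent of $h$; define $f(v):=c_h(v)$. Choosing any edge $h\sim h'$ of $H$ (which exists because $\chi(H)\ge 2$), every edge $(v,w)$ of $G$ yields $(v,h)\sim(w,h')$ and hence $f(v)=c_h(v)\ne c_{h'}(w)=f(w)$, so $f$ is a proper $n$-coloring of $G$ and $\chi(G)\le n$, completing the reduction. I expect the main obstacle to be the non-bijective case: finding the correct coloring rule for the columns (the ``smallest repeated value'' choice) and checking it is proper is the one genuinely clever point, whereas the propagation bookkeeping, the connectivity reduction, and the final assembly of $f$ are routine once that construction is in hand.
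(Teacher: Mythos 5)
Your proof is correct, but there is nothing in the paper to compare it against: Theorem \ref{thm:BEL} is quoted from the reference \cite{BEL} (Burr, Erd\H{o}s, Lov\'asz) as a known result, and the paper gives no proof of it. Judged on its own, your argument is a complete and valid proof, and it follows the classical line of reasoning for this theorem. The three steps all check out: (1) the rigidity lemma --- if $c_h|_Q$ is a bijection on an $n$-clique $Q$ and $h\sim h'$, then for each $q'\in Q$ the adjacencies $(q,h)\sim(q',h')$ for $q\ne q'$ exclude all colors except $c_h(q')$, so $c_{h'}|_Q=c_h|_Q$ --- together with connectivity of $H$ gives the clean dichotomy per clique; (2) in the non-bijective alternative, the ``smallest repeated color'' map $\gamma$ is indeed a proper $n$-coloring of $H$, since a common repeated color $b$ at adjacent $h,h'$ always yields distinct $q,q'\in Q$ with $c(q,h)=c(q',h')=b$, contradicting properness; this correctly contradicts $\chi(H)\ge n+1$; (3) the assembled map $f$ is well defined because $c_h(v)$ is forced to equal $\pi_Q(v)$ for every clique $Q$ containing $v$, and properness of $f$ follows from any single edge of $H$. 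Two small points worth making explicit if you write this up: the reduction to connected $H$ only preserves the inequality $\chi(G\times H')\le n$ (not equality), but that is all your argument uses; and the conclusion of step (3) is $\chi(G)\le n$, which combines with the clique bound $\chi(G)\ge n$ to finish, so the case split on $\chi(H)$ at the start is genuinely exhaustive.
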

	
	\begin{thm} \cite{DSW}\cite{EW}
		Let $G$ and $H$ be connected graphs containing $n$-cliques. If $\chi(G \times H) = n$, then $\min\{\chi(G), \chi(H)\} = n$.
	\end{thm}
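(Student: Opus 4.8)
The plan is to establish the only nontrivial part: since $G$ and $H$ contain $n$-cliques we already have $\chi(G)\ge n$ and $\chi(H)\ge n$, so it suffices to show that the existence of a proper $n$-colouring $c\colon V(G\times H)\to\{1,\dots,n\}$ forces $\chi(G)=n$ or $\chi(H)=n$. I would fix an $n$-clique $K=\{a_1,\dots,a_n\}\subseteq G$ and an $n$-clique $L=\{b_1,\dots,b_n\}\subseteq H$, and then use $c$ together with these two cliques to manufacture a proper $n$-colouring of one of the factors. Throughout, $(g,h)\sim(g',h')$ in $G\times H$ means $g\sim g'$ and $h\sim h'$.

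First I would analyse the restriction of $c$ to $K\times L\cong K_n\times K_n$. Since $(a_i,b_k)\sim(a_j,b_l)$ precisely when $i\ne j$ and $k\ne l$, each colour class is an independent set: if it contained two cells in different rows it would be confined to a single column, and symmetrically, so every class lies in one row $\{a_i\}\times L$ or one column $K\times\{b_k\}$. A counting argument (the $n$ classes cover all $n^2$ cells and each meets a single line) then forces them to be $n$ full rows or $n$ full columns, i.e. $c(a_i,b_k)=\pi(i)$ for a permutation $\pi$, or $c(a_i,b_k)=\rho(k)$ for a permutation $\rho$. Interchanging the roles of $G$ and $H$, I may assume the first case, and after relabelling colours $c(a_i,b_k)=i$. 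Next I would propagate this along $H$: writing $\phi(h)=(c(a_1,h),\dots,c(a_n,h))$, adjacency gives $\phi(h)_i\ne\phi(h')_j$ whenever $h\sim h'$ and $i\ne j$, and one checks that if $\phi(h)$ is a permutation then $\phi(h')=\phi(h)$. Hence $\{h:\phi(h)\text{ is a permutation}\}$ is a union of components of $H$; it contains every $b_k$ (where $\phi(b_k)$ is the identity), so connectedness of $H$ makes it all of $V(H)$ with $\phi$ constant. This yields the tube identity $c(a_i,h)=i$ for all $h\in V(H)$ and all $i$.

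The heart of the argument is to colour $G$. For $g\in V(G)$ set $\psi(g)=(c(g,b_1),\dots,c(g,b_n))$; as before, $g\sim g'$ forces $\psi(g)_k\ne\psi(g')_l$ for all $k\ne l$. I would let $R(g)\subseteq\{1,\dots,n\}$ be the set of colours occurring at least twice in $\psi(g)$ and define $f(g)$ to be any element of $R(g)$. Two facts make $f$ a proper $n$-colouring. First, $R(g)$ is never empty: the set $\{g:\psi(g)\text{ is a permutation}\}$ is again a union of components of $G$, and since it omits each $a_i$ (where $\psi(a_i)=(i,\dots,i)$ is constant by the tube identity), connectedness of $G$ forces it to be empty, so every $\psi(g)$ repeats some colour. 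Second, if $t\in R(g)$ and $g\sim g'$, then $t$ occupies two distinct positions of $\psi(g)$; for any position $l$ of $\psi(g')$ one of those two positions differs from $l$, whence $\psi(g')_l\ne t$, so $t$ does not occur in $\psi(g')$ at all. Thus $R(g)\cap R(g')=\emptyset$ across every edge, so $f(g)\ne f(g')$, giving $\chi(G)\le n$, hence $\chi(G)=n$ and $\min\{\chi(G),\chi(H)\}=n$.

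I expect the main obstacle to be precisely the construction in the previous paragraph, not the bookkeeping. The naive attempt $f(g)=c(g,b_1)$ fails, since one can exhibit adjacent $g,g'$ with $c(g,b_1)=c(g',b_1)$; one must instead find an invariant of the whole tuple $\psi(g)$ that is forced to be disjoint along edges, and the repeated-colour set $R(g)$ is such an invariant. Its usefulness hinges entirely on guaranteeing $R(g)\ne\emptyset$, which is where the clique in $G$ (via the tube identity) and the connectedness of $G$ are essential, while the clique in $H$ and the connectedness of $H$ enter through the $K_n\times K_n$ classification and the tube propagation. Making all four hypotheses cooperate is the delicate point, and the symmetric column case is handled by the same argument with the roles of $G$ and $H$ reversed, producing instead a proper $n$-colouring of $H$.
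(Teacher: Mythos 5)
Your proof is correct, but there is nothing in the paper to compare it against: the paper only quotes this statement as known background, with citations to Duffus--Sands--Woodrow and Welzl, and gives no proof of it. Your argument is essentially the classical one behind those citations: the row/column classification of proper $n$-colourings of $K_n\times K_n$, propagation of permutation-tuples along edges, and the repeated-colour invariant $R(g)$ whose pairwise disjointness across edges yields a proper $n$-colouring of one factor. Two minor observations. First, your Step 2 (the tube identity $c(a_i,h)=i$ for all $h\in V(H)$) is never actually used: in Step 3 you only need $\psi(a_i)=(i,\dots,i)$, which already follows from the normalization $c(a_i,b_k)=i$ on $K\times L$. With Step 2 deleted, connectedness of $H$ enters only in the symmetric column case and connectedness of $G$ only in the row case, which is exactly how the two hypotheses divide the work. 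Second, your construction implicitly assumes $n\ge 2$: for $n=1$ the tuple $\psi(a_1)=(1)$ is a permutation, so $R(g)$ can be empty; but that case is trivial, since $\chi(G\times H)=1$ forces one connected factor to be a single vertex. Neither point affects the validity of the proof.
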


	\begin{thm}\cite[Theorem $3$]{moroli} \label{thm:moroli}
		Let $G$  be be graph such that the clique number of $G$ is $n \geq 2$.
Let $A$ be the set of vertices of $G$ that are not contained in an $n$-clique. Let there exists a 
coloring  with $n+1$ colors $1, \ldots, n+1$ of $G$  such that each vertex of $A$ is colored by either $n$ or $n+1$.  For every graph $H$, if $\chi(G \times H) = n$, then $\min \{\chi(G), \chi(H)\} =  n$. 
		\end{thm}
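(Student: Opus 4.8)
The plan is to argue by a fibrewise (Greenwell--Lov\'asz type) rigidity analysis of an $n$-colouring of $G\times H$, generalising the mechanism behind Theorem \ref{thm:BEL}. First I would record the cheap reductions. Since $\chi(G\times H)=n\le\min\{\chi(G),\chi(H)\}$ always, both $\chi(G)\ge n$ and $\chi(H)\ge n$; moreover the hypothesised colouring with $n+1$ colours (call it $\gamma$) shows $\chi(G)\le n+1$, so $\chi(G)\in\{n,n+1\}$. If $\chi(G)=n$ we are done, so I may assume $\chi(G)=n+1$ and aim to prove $\chi(H)=n$; as $\chi(H)\ge n$ it suffices to produce a proper $n$-colouring of $H$, and this can be done one component at a time, so I take $H$ connected with at least one edge. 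Fix a proper colouring $c\colon V(G\times H)\to\{1,\dots,n\}$ and write $c_h=c(\cdot,h)$ for the induced map $V(G)\to\{1,\dots,n\}$ on the fibre over $h$.

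The technical core is a rigidity statement in the spirit of Greenwell--Lov\'asz. For an $n$-clique $K=\{g_1,\dots,g_n\}$ of $G$, call $h$ \emph{$K$-rainbow} if $c_h|_K$ is a bijection onto $\{1,\dots,n\}$. Since $(g_i,h)$ and $(g_j,h')$ are adjacent in $G\times H$ whenever $i\ne j$ and $hh'\in E(H)$, properness of $c$ forces $c_h(g_i)\ne c_{h'}(g_j)$ for all $i\ne j$ along every edge $hh'$ of $H$. From this I would extract two facts: (i) if $h'$ is $K$-rainbow and $hh'\in E(H)$, then $c_h|_K=c_{h'}|_K$; hence the $K$-rainbow vertices form a union of components of $H$, on each of which $c(g,h)$ is independent of $h$ for $g\in K$; and (ii) if a colour is repeated by $c_h|_K$, it cannot be repeated by $c_{h'}|_K$ for any $h'\sim h$, so adjacent fibres have disjoint sets of repeated colours.

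These facts give a dichotomy. If some $n$-clique $K_0$ admits no $K_0$-rainbow fibre, then every $c_h|_{K_0}$ repeats a colour; setting $f(h)$ to be any such repeated colour, fact (ii) makes $f$ a proper colouring of $H$ with colours in $\{1,\dots,n\}$, whence $\chi(H)=n$. Otherwise every $n$-clique is rainbow over $H$, and fact (i) shows that $c(g,h)$ is independent of $h$ for every $g$ in $B=V(G)\setminus A$; writing $c(g,h)=d(g)$, the map $d\colon B\to\{1,\dots,n\}$ is a proper colouring of $G[B]$, because for adjacent $g,g'\in B$ any edge $hh'$ of $H$ gives $(g,h)\sim(g',h')$ and hence $d(g)\ne d(g')$.

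The hard part will be to eliminate this second, ``all-rainbow'' case under the standing assumption $\chi(G)=n+1$, and this is exactly where the hypothesis on $\gamma$ must be spent. The mechanism I expect: each $a\in A$ is adjacent to at most $n-2$ vertices of any $n$-clique $K$ --- otherwise $a$ together with $n-1$ mutually adjacent vertices of $K$ would form an $n$-clique through $a$, contradicting $a\in A$ --- so across an edge of $H$ the admissible colours for $c_h(a)$ exclude at most $n-2$ values and always leave at least two free, mirroring the two ``spare'' colours $n,n+1$ that $\gamma$ assigns on $A$. The plan is therefore to merge the fibre-derived colouring $d$ on $B$ with the two-colour pattern $\gamma|_A$ into a single proper $n$-colouring of $G$, which would contradict $\chi(G)=n+1$ and show that the all-rainbow case cannot occur, leaving only the first case. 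Carrying out this merge --- reconciling the fibre colours $d$ on $B$ with the colours $n,n+1$ forced on $A$ along the $A$--$B$ edges, possibly after a global recolouring --- is the main obstacle, and is precisely the step for which Moroli's restriction of the non-clique vertices to two colours is indispensable.
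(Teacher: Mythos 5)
This theorem is quoted in the paper from Broere and Matsoha \cite{moroli} without any internal proof, so there is nothing in the paper to compare your argument against; it can only be judged on its own terms. Your reductions and your rainbow-fibre analysis are correct and are exactly the classical Greenwell--Lov\'asz/Burr--Erd\H{o}s--Lov\'asz rigidity mechanism: facts (i) and (ii) hold, Case A (some $n$-clique $K_0$ with no $K_0$-rainbow fibre) correctly yields a proper $n$-colouring of $H$ via the repeated colours, and in Case B (over a connected $H$ with an edge) you correctly obtain a map $d\colon B\to\{1,\dots,n\}$ with $c(g,h)=d(g)$ independent of $h$, proper on $G[B]$. However, the step you yourself flag as the crux --- eliminating Case B under $\chi(G)=n+1$ --- is missing, and the mechanism you sketch for it would not work. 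The observation that each $a\in A$ meets any $n$-clique in at most $n-2$ vertices gives $a$ two spare colours only \emph{relative to a single clique}; $a$ may neighbour many cliques and many vertices of $B$, so this bound does not control its list of forbidden colours. Worse, $d$ typically uses all $n$ colours on $B$ while $\gamma$ insists on colours $n,n+1$ on $A$, so a literal merge of $d$ with $\gamma|_A$ is impossible, and no recolouring scheme is given. This is a genuine gap, located precisely where the theorem's special hypothesis must be spent.

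The gap can be closed, but by a different use of the hypothesis: all that $\gamma$ is needed for (beyond $\chi(G)\le n+1$) is that $G[A]$ is \emph{bipartite}, and one should colour $A$ using the product colouring $c$ itself on two adjacent fibres, not using $\gamma$ or the clique-degree bound. Write $A=A_1\sqcup A_2$ with $A_1=\gamma^{-1}(n)\cap A$ and $A_2=\gamma^{-1}(n+1)\cap A$, both independent sets in $G$; fix an edge $(h_0,h_1)\in E(H)$; and define $\phi(g)=d(g)$ for $g\in B$, $\phi(a)=c(a,h_0)$ for $a\in A_1$, and $\phi(a)=c(a,h_1)$ for $a\in A_2$. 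Every edge of $G$ is properly coloured by $\phi$: edges inside $B$ by properness of $d$; an edge $(a,b)$ with $a\in A_1$, $b\in B$ because $(a,h_0)\sim(b,h_1)$ in $G\times H$ and $d(b)=c(b,h_1)$; an edge $(a,b)$ with $a\in A_2$, $b\in B$ because $(a,h_1)\sim(b,h_0)$; an edge between $A_1$ and $A_2$ because $(a,h_0)\sim(a',h_1)$; and there are no edges inside $A_1$ or inside $A_2$. Thus $\phi$ is a proper $n$-colouring of $G$, contradicting $\chi(G)=n+1$, so Case B cannot occur and Case A finishes the proof component by component. With this two-fibre trick inserted your outline becomes a complete proof; as written, it stops exactly at the decisive step.
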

	In all above results, for a  pair of graph $G$ and $H$, the Hedetniemi's conjecture is true, when $\chi(G) = n+1$ and $G$ contains an $n$-clique. It is natural to ask, what can we say, if  $\chi(G) = n+2$ and the size of the maximal clique in $G$  is  $n$. 	In this article we prove the following, which is an immediate consequence of Theorem \ref{doublenew}.
	
%
	
	\begin{thm}\label{Hedet}
		Let $n \geq 2$ be positive integer. Let  $G$  be  a graph containing $M(M(K_n))$ as a subgraph. For every graph $H$, if $\chi(G \times H) = n+1$, then $\min \{\chi(G), \chi(H)\} =  n+1$. 
	\end{thm}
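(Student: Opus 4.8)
The plan is to deduce this directly from Theorem \ref{doublenew} using the standard product–exponential adjunction for graph homomorphisms, together with the trivial projection bound already noted in the discussion of Conjecture \ref{Hedetniemi}. The real content is carried by Theorem \ref{doublenew}; the present statement is a formal consequence.

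First I would record the inequalities coming from the projections $G \times H \to G$ and $G \times H \to H$. Since $\chi(G \times H) = n+1$ and $\chi(G \times H) \leq \min\{\chi(G), \chi(H)\}$, we immediately obtain $\chi(G) \geq n+1$ and $\chi(H) \geq n+1$, hence $\min\{\chi(G), \chi(H)\} \geq n+1$. Thus it remains only to prove the reverse inequality, and for this it suffices to establish that $\chi(H) \leq n+1$.

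Next I would invoke the exponential adjunction: for any graphs $A, B$ and any positive integer $m$, there is a graph homomorphism $A \times B \to K_m$ if and only if there is a graph homomorphism $B \to K_m^A$. Applying this with $A = G$, $B = H$, and $m = n+1$, the hypothesis $\chi(G \times H) = n+1$ (which provides a homomorphism $G \times H \to K_{n+1}$) yields a homomorphism $\varphi : H \to K_{n+1}^G$. Since $M(M(K_n))$ is a subgraph of $G$, Theorem \ref{doublenew} applied with $m = n+1$ (which is permissible as $n+1 \leq n+1$) gives $\chi(K_{n+1}^G) = n+1$, equivalently a homomorphism $\psi : K_{n+1}^G \to K_{n+1}$. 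Composing, $\psi \circ \varphi : H \to K_{n+1}$ is a homomorphism, so $\chi(H) \leq n+1$. Combined with $\chi(H) \geq n+1$ this forces $\chi(H) = n+1$, and therefore $\min\{\chi(G), \chi(H)\} = n+1$, as required.

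I do not anticipate any genuine obstacle: once Theorem \ref{doublenew} is in hand, the argument is a purely formal manipulation. The only point requiring mild care is to invoke the adjunction in the correct variance, so that the proper coloring of $K_{n+1}^G$ furnished by Theorem \ref{doublenew} pulls back along $\varphi$ to a proper coloring of $H$; everything else is immediate.
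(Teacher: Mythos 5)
Your proof is correct and follows essentially the same route as the paper: both derive a homomorphism $H \to K_{n+1}^{G}$ from the coloring of $G \times H$ via the product--exponential correspondence, and then apply Theorem \ref{doublenew} (with $m = n+1$) to color $H$ with $n+1$ colors. The only cosmetic difference is that the paper identifies $\min\{\chi(G),\chi(H)\} = \chi(H)$ by noting $\chi(G) \geq \chi(M(M(K_n))) = n+2$, whereas you get $\min\{\chi(G),\chi(H)\} \geq n+1$ directly from the projection bound, which works equally well.
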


 Some other special
cases of the Conjecture \ref{Hedetniemi} have also been proved to be true \cite{meysam, moroli, BEL,  DSW, SZ,  TZ, DT,  EW}.
For further development and related research, we refer to the survey articles \cite{sauer, CT, XZ}.

\section{Preliminaries} \label{sec:prel}
%
%
%
%

A  graph $G$ is a  pair $(V(G), E(G))$,  where $V(G)$  is the set of vertices of $G$  and $E(G) \subseteq V(G)\times V(G)$ denotes the set of edges.
If $(x, y) \in E(G)$, it is also denoted by $x \sim y$ and $x$ is said to be adjacent to $y$.
All the graphs in this article are finite, undirected, i.e., $(x,y) \in E(G)$ implies $(y, x) \in E(G)$ and without multiple edges.

A graph $G$ is said to be {\it simple} if $(x,y) \in E(G)$ implies that $x \neq y$. A {\it subgraph} $H$ of $G$ is a graph with $V(H) \subseteq V(G)$ and $E(H) \subseteq E(G)$.
For a subset $U \subseteq V(G)$, the induced subgraph $G[U]$ is the subgraph whose set of vertices  $V(G[U]) = U$
and the set of edges
$E(G[U]) = \{(a, b) \in E(G) \ | \ a, b \in U\}$.

For a positive integer $r$, {\it cyclic graph} $C_r$ is the graph with $V(C_r) = \{1, \ldots, r\}$ and $E(C_r) = \{(i, i+1) \ | \ 1 \leq i \leq r-1\} \cup \{(1,r)\}$.

The {\it
	neighborhood  of $v \in V(G)$} is defined as $N_G(v)=\{ w \in V(G) \ |  \
(v,w) \in E(G)\}$.  If $A\subseteq V(G)$, the neighborhood of $A$
is defined as $N_G(A)= \{x \in  V(G) \ | \ (x,a) \in E(G)
\ \forall \ a \in A \}$.  
 
  A {\it graph homomorphism} from $G$ to $H$ is a set map $\phi: V(G) \to V(H)$ which preserve the edges, i.e, $(x,y) \in E(G) \implies (\phi(x), \phi(y)) \in E(H)$. A graph homomorphism $f$ is called an {\it isomorphism} if $f$ is bijective and $f^{-1}$ is also a graph homomorphism. Two graphs are called {\it isomorphic},  if there exists an isomorphism between them. If $G$ and $H$ are isomorphic, we write $G \cong H$.
  
  The {\it complete graph} $K_n$ is a graph with 
$V(K_n) = \{1, \ldots, n\}$  and $E(K_n) =\{(i,j) | i \neq j\}$. A graph isomorphic to  $K_n$ is called an {\it $n$-clique} or a {\it clique of size $n$}. The {\it chromatic number} of a simple graph is defined by 
$\chi(G): = \min\{n \ | \ \exists \ \text{a graph homomorphism from} \ G \ \text{to} \ K_n  \}$.

The
{\it clique number} $\omega(G)$ of a graph $G$ is the largest integer $k$ such that $G$ contains an $k$-clique as a subgraph. 

\begin{defn}[Categorical product] \label{def:product}
		The {\it categorical product} of two graphs $G$ and $H$, denoted by $G\times H$ is the graph where $V(G\times H)=V(G)\times V(H)$ and  $(g,h) \sim (g',h')$ in $G\times H$  if $g \sim g'$  and $h \sim h'$ in $G$ and $H$ respectively.
	\end{defn}
	
	\begin{defn}[Exponential graph] Let $G$ and $H$ be graphs. The {\it exponential graph} $\displaystyle H^{G}$ is a graph where $\displaystyle V(H^{G})$ contains all the set maps from $V(G)$ to $V(H)$ and any two vertices  $f$  and $f'$ in $\displaystyle V(H^{G})$ are  said to be adjacent, if $ v \sim v'$  in $G$ implies that $f(v) \sim f'(v')$ in $H$.
	
	\end{defn}

More details about product of graphs and exponential graphs can be  found in 	\cite{GR,HN}.
	
	A topological space $X$ is said to be $k$-{\it connected} if every continuous map from a $m$-dimensional sphere $\mathbb{S}^m \to X$ can be extended to a continuous map from the $(m+1)$-dimensional ball $\mathbb{B}^{m+1} \to X$ for $m = 0, 1, \ldots , k.$ The {\it connectivity} of $X$ is the largest $k$ such that $X$ is $k$-connected and it is denoted by $conn(X)$. If $X$ is a non empty
	disconnected space, it is said to be $-1$ connected.

	A {\it finite abstract simplicial complex X} is a collection of
	finite sets where $\tau \in X$ and $\sigma \subset \tau$,
	implies $\sigma \in X$. The elements  of $X$ are called the  {\it simplices}
	of $X$. If $\sigma \in X$ and $|\sigma |=k+1$, then $\sigma$ is said
	to be $k\,dimensional$.

	\begin{defn}
	The {\it neighborhood complex} $\N(G)$ of a graph $G$ is the abstract simplicial complex whose simplices are those subsets of 
	vertices of $G$, which have a common neighbor, i.e., $\N(G) = \{A \subseteq V(G) \ | \ N_G(A) \neq \emptyset\}$.
	\end{defn}
	
	In this article we consider $\N(G)$ as a topological space, namely its  geometric realization. For  definition of geometric realization and details about simplicial complexes, we refer to  book \cite{dk} by  Kozlov.
	
		\begin{defn}\cite[Definition $1.2$]{BK}
	For any two graphs $G$ and $H$, Hom complex  Hom$(G,H)$ is the polyhedral
	complex whose cells are indexed by all functions $\eta: V(G) \to
	2^{V(H)}\setminus \{\emptyset\}$, such that if $(v,w) \in E(G)$ then
	$\eta(v) \times \eta(w) \subseteq E(H)$.

	\end{defn}

	 Let $G$ be a graph and $N_G(u) \subseteq N_G(v)$  for two distinct vertices $u$ and $v$ of $G$. The graph $G\setminus \{u\}$ is called a {\it fold} of $G$.  Here,  $V(G\setminus
	\{u\}) = V(G) \setminus \{u\}$ and the edges in the subgraph
	$G\setminus \{u\}$ are all those edges of $G$ which do not contain
	$u$. In this case,  the map $V(G) \to V(G) \setminus \{u\}$, which sends $u$ to $v$ and fixes all other vertices, is a graph homomorphism. Thus, we conclude that  $\chi(G \setminus\{u\}) = \chi(G)$.

From \cite[Theorem $3.3$]{kozlov} we have the following result which allows us to replace a graph by a subgraph in the $\text{Hom}$ complex.

\begin{prop}\label{prop:folding}
	Let $G \setminus \{v\}$ be a fold of $G$ and let H be some graph. Then
	Hom$(G,H) \simeq$ Hom$(G \setminus\{v\} ,H)$  and  Hom$(H,G) \simeq $
	Hom$(H,G \setminus \{v\} )$. 
	\end{prop}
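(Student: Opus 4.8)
The plan is to realise the fold as a pair of maps that are mutually inverse up to homotopy, and to build the homotopies by elementary poset (closure operator) deformation retractions of the face posets of the relevant $\text{Hom}$ complexes. Write $G' = G\setminus\{v\}$ and let $u\neq v$ be a vertex witnessing the fold, so that $N_G(v)\subseteq N_G(u)$. There are two graph homomorphisms in play: the inclusion $i\colon G'\hookrightarrow G$ and the fold $r\colon G\to G'$ defined by $r(v)=u$ and $r|_{V(G')}=\mathrm{id}$, the latter being a homomorphism precisely because $N_G(v)\subseteq N_G(u)$. Since $\text{Hom}(H,-)$ is covariant, $\text{Hom}(-,H)$ is contravariant, and $r\circ i=\mathrm{id}_{G'}$, the induced maps satisfy $r_*i_* = \mathrm{id}$ on $\text{Hom}(H,G')$ and $i^*r^* = \mathrm{id}$ on $\text{Hom}(G',H)$. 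Hence in each case it suffices to produce a deformation retraction of the larger $\text{Hom}$ complex onto the image of the smaller one, equivalently to show $i_*r_*\simeq\mathrm{id}$ and $r^*i^*\simeq\mathrm{id}$.

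For the covariant statement $\text{Hom}(H,G)\simeq\text{Hom}(H,G')$ I would work on the face poset $P$ of the prodsimplicial complex $\text{Hom}(H,G)$, whose elements are the cells $\eta\colon V(H)\to 2^{V(G)}\setminus\{\emptyset\}$ with $\eta(a)\times\eta(b)\subseteq E(G)$ for every $(a,b)\in E(H)$, ordered by componentwise inclusion. First apply the ascending closure operator $c$ that adjoins $u$ to every fibre containing $v$, i.e. $c(\eta)(a)=\eta(a)\cup\{u\}$ when $v\in\eta(a)$ and $c(\eta)(a)=\eta(a)$ otherwise; the cell condition for $c(\eta)$ follows from $N_G(v)\subseteq N_G(u)$. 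Then, on the fixed subposet (cells in which $v\in\eta(a)$ forces $u\in\eta(a)$), apply the descending closure operator $d$ that deletes $v$ from every fibre; $d$ is well defined there because such a fibre cannot equal $\{v\}$, and the fixed poset of $d$ is exactly the face poset of $\text{Hom}(H,G')$. Since a monotone idempotent self-map $f$ of a poset with $f\geq\mathrm{id}$ (resp. $f\leq\mathrm{id}$) induces a strong deformation retraction of the order complex onto that of its image, composing the two retractions gives $\text{Hom}(H,G)\simeq\text{Hom}(H,G')$.

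For the contravariant statement $\text{Hom}(G,H)\simeq\text{Hom}(G',H)$ the cells are maps $\eta\colon V(G)\to 2^{V(H)}\setminus\{\emptyset\}$, and the only constraints involving $v$ come from edges $(v,x)$ with $x\in N_G(v)$. Here I would use the single ascending closure operator that replaces $\eta(v)$ by the largest compatible set $M(\eta)=\bigcap_{x\in N_G(v)}N_H(\eta(x))$ and leaves every other fibre unchanged. One checks $M(\eta)\supseteq\eta(v)\neq\emptyset$, so the operator is genuinely ascending, and on its image the value at $v$ is determined by the restriction $\eta|_{V(G')}$. The key point is that $N_G(v)\subseteq N_G(u)$ forces $\eta(u)\subseteq M(\eta)$, so that restriction to $V(G')$ and extension by $\eta(v)=M(\eta)$ are mutually inverse isomorphisms between the fixed subposet and the face poset of $\text{Hom}(G',H)$. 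The closure-operator deformation retraction then yields $\text{Hom}(G,H)\simeq\text{Hom}(G',H)$.

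The main obstacle is verifying the cell conditions cleanly, and in particular the behaviour at loops: adjoining $u$ to two adjacent fibres in the covariant step, or certifying the $(v,x)$-constraints in the contravariant step, uses only $N_G(v)\subseteq N_G(u)$ except in the degenerate situation where $v$ carries a loop, which must be treated separately (or excluded, as happens for simple graphs). Once the validity of the two closure operators is established, the homotopy equivalences follow from the standard structural fact that an ascending (or descending) closure operator on the face poset of a prodsimplicial complex induces a deformation retraction onto the subcomplex realised by its fixed cells, together with the homeomorphism between a prodsimplicial complex and the order complex of its face poset; this is the input I would cite rather than reprove.
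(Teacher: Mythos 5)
First, a point of comparison: the paper does not prove this proposition at all; it is quoted from Kozlov (\cite[Theorem $3.3$]{kozlov}, ``A simple proof for folds on both sides in complexes of graph homomorphisms''), whose proof is, like yours, based on closure operators on face posets. Your covariant half, the equivalence $\mathrm{Hom}(H,G)\simeq \mathrm{Hom}(H,G\setminus\{v\})$ via the ascending operator adjoining $u$ to every fibre containing $v$ followed by the descending operator deleting $v$, is correct and is essentially Kozlov's argument: both operators are monotone, idempotent, respectively $\geq\mathrm{id}$ and $\leq\mathrm{id}$, and the cell conditions follow from $N_G(v)\subseteq N_G(u)$. (Even the loop case you flag is automatic here: if $v$ lies in two adjacent fibres then $(v,v)\in E(G)$, so $v\in N_G(v)\subseteq N_G(u)$ gives $(u,v)\in E(G)$, and then $u\in N_G(v)\subseteq N_G(u)$ gives $(u,u)\in E(G)$; no separate treatment is needed.)

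The contravariant half has a genuine gap. Your operator $c(\eta)(v)=M(\eta)=\bigcap_{x\in N_G(v)}N_H(\eta(x))$ is ascending and idempotent, but it is \emph{not} monotone, and monotonicity is exactly what the closure-operator lemma requires (a non-monotone self-map does not even induce a simplicial map on the order complex). Enlarging a fibre $\eta(x)$ shrinks $N_H(\eta(x))$: if $\eta\leq\eta'$ then $M(\eta)\supseteq M(\eta')$, the reverse of what is needed. Concretely, let $G$ have vertices $u,v,w$ and edges $(v,w),(u,w)$, so $N_G(v)=\{w\}\subseteq N_G(u)$, and let $H=K_3$; take $\eta\leq\eta'$ with $\eta(w)=\{1\}$, $\eta'(w)=\{1,2\}$, and $\eta(v)=\eta'(v)=\eta(u)=\eta'(u)=\{3\}$. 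Then $c(\eta)(v)=\{2,3\}\not\subseteq\{3\}=c(\eta')(v)$. The same defect breaks your final identification: restriction is a monotone bijection from the fixed cells onto the face poset of $\mathrm{Hom}(G\setminus\{v\},H)$, but its inverse (extension by $M$) is not monotone, so this is not a poset isomorphism and the order complexes need not coincide. The repair is Kozlov's actual argument: use the ascending operator $\eta(v)\mapsto\eta(v)\cup\eta(u)$ (legitimate since every $x\in N_G(v)$ lies in $N_G(u)$, so $\eta(u)\times\eta(x)\subseteq E(H)$), whose image consists of the cells with $\eta(u)\subseteq\eta(v)$, followed by the descending operator $\eta(v)\mapsto\eta(u)$ on that image; the resulting fixed cells are those with $\eta(v)=\eta(u)$, i.e.\ exactly the image of $r^*$, and this subposet \emph{is} order-isomorphic to the face poset of $\mathrm{Hom}(G\setminus\{v\},H)$ via restriction. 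With that substitution your outline goes through.
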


From \cite[Proposition 3.5]{ad} we have a relationship between the exponential graph and the categorical product in the $\text{Hom}$ complex.

\begin{prop}\label{prop:shifting}
	Let $G$, $H$ and $K$ be graphs.  Then
	$$
	\displaystyle  \text{Hom}(G \times H, K)  \simeq \text{Hom}(G, K^{H}).
	$$

\end{prop}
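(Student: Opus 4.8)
The plan is to prove the statement at the level of the face posets of the two Hom complexes, since each $\mathrm{Hom}(-,-)$ is a polyhedral (regular CW) complex whose geometric realization is homeomorphic to the order complex of its face poset, i.e.\ to its barycentric subdivision; see \cite{dk}. Recall that a cell of $\mathrm{Hom}(G,H)$ is a map $\eta\colon V(G)\to 2^{V(H)}\setminus\{\emptyset\}$ with $\eta(v)\times\eta(w)\subseteq E(H)$ whenever $(v,w)\in E(G)$, and that $\eta\le\eta'$ in the face poset means $\eta(v)\subseteq\eta'(v)$ for all $v$. I write $P$ for the face poset of $\mathrm{Hom}(G\times H,K)$ and $Q$ for that of $\mathrm{Hom}(G,K^{H})$.

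First I would construct an order embedding $\Phi\colon P\to Q$ lifting the classical graph exponential adjunction on $0$-cells. Given a cell $\eta\colon V(G)\times V(H)\to 2^{V(K)}\setminus\{\emptyset\}$, define $\Phi(\eta)(g)=\prod_{h\in V(H)}\eta(g,h)\subseteq V(K^{H})$, i.e.\ the set of all maps $f\colon V(H)\to V(K)$ with $f(h)\in\eta(g,h)$ for every $h$. The check that $\Phi(\eta)$ is a genuine cell of $\mathrm{Hom}(G,K^{H})$ is routine: if $g\sim g'$ in $G$ and $h\sim h'$ in $H$ then $(g,h)\sim(g',h')$ in $G\times H$, so $\eta(g,h)\times\eta(g',h')\subseteq E(K)$, which is exactly the condition forcing $\Phi(\eta)(g)\times\Phi(\eta)(g')\subseteq E(K^{H})$. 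Since $\Phi(\eta)(g)$ is a full product with nonempty factors, its projection onto the $h$-th coordinate recovers $\eta(g,h)$, so $\Phi$ is injective and both $\Phi$ and its inverse on the image are monotone. Thus $\Phi$ is an isomorphism of $P$ onto the subposet $\bar Q\subseteq Q$ of \emph{box cells}, those $\zeta$ for which each $\zeta(g)$ is a product set.

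Next I would exhibit $\bar Q$ as the image of a closure operator on $Q$. For $\zeta\in Q$ put $c(\zeta)(g)=\prod_{h}\pi_{h}(\zeta(g))$, where $\pi_{h}(\zeta(g))=\{f(h)\mid f\in\zeta(g)\}\subseteq V(K)$ is the coordinate projection. The same edge computation as above, now choosing for given projection values witnesses $F\in\zeta(g)$ and $F'\in\zeta(g')$ with $F(h)$ and $F'(h')$ equal to those values, shows $c(\zeta)$ is again a cell; clearly $\zeta\le c(\zeta)$, the map $c$ is monotone, and $c$ fixes precisely the box cells, so $c\circ c=c$ and $c(Q)=\bar Q$. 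Invoking the standard closure-operator homotopy lemma \cite{dk} (a monotone idempotent self-map $c$ of a poset with $c\ge\mathrm{id}$ deformation retracts its order complex onto that of $c(Q)$) gives $|\mathrm{Hom}(G,K^{H})|=|\Delta(Q)|\simeq|\Delta(\bar Q)|$. Combining this with the poset isomorphism $\Phi$ yields $|\Delta(\bar Q)|\cong|\Delta(P)|=|\mathrm{Hom}(G\times H,K)|$, which is the asserted homotopy equivalence.

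The edge-condition verifications are harmless bookkeeping; the step I expect to demand the most care is the interface between the combinatorics and the topology. One must be explicit that $\bar Q$ is only a \emph{subposet} of $Q$ and need not be closed under passage to faces (a face of a box cell is generally not a box), so the retraction produced by $c$ lives on the order complexes, that is the barycentric subdivisions, rather than on the polyhedral complexes themselves. This is precisely why the conclusion is a homotopy equivalence and not a homeomorphism, and it is the reason the face-poset/order-complex correspondence together with the closure-operator lemma of \cite{dk} must be cited carefully for a clean write-up.
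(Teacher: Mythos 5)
You cannot be compared against the paper's own proof here, because the paper gives none: Proposition \ref{prop:shifting} is quoted from Dochtermann \cite[Proposition 3.5]{ad}. Your argument is a correct, self-contained proof, and it is essentially the argument of that cited source: realize each Hom complex as the order complex of its face poset, embed the face poset $P$ of $\mathrm{Hom}(G\times H,K)$ into the face poset $Q$ of $\mathrm{Hom}(G,K^{H})$ as the box cells via $\Phi(\eta)(g)=\prod_{h}\eta(g,h)$, and retract $Q$ onto this image by the closure operator $c(\zeta)(g)=\prod_{h}\pi_{h}(\zeta(g))$, invoking the closure-operator lemma from \cite{dk}; your caveat that the box cells form only a subposet, not a subcomplex, so that the equivalence lives on barycentric subdivisions, is also accurate. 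The single point you assert without verifying is that $\Phi$ maps $P$ \emph{onto} the box cells, equivalently that $c(Q)\subseteq\Phi(P)$: given a box cell $\zeta$ with $\zeta(g)=\prod_{h}S_{g,h}$, one must check that $\eta(g,h):=S_{g,h}$ satisfies the cell condition for $\mathrm{Hom}(G\times H,K)$. This follows by the same witness trick you already use for $c$: if $(g,h)\sim(g',h')$ with $a\in S_{g,h}$ and $a'\in S_{g',h'}$, choose $f\in\zeta(g)$ and $f'\in\zeta(g')$ with $f(h)=a$ and $f'(h')=a'$; then $f\sim f'$ in $K^{H}$ forces $a\sim a'$ in $K$. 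So the omission is a one-line repair, but strictly speaking without it the chain $|\Delta(Q)|\simeq|\Delta(c(Q))|=|\Delta(\Phi(P))|\cong|\Delta(P)|$ does not close up.
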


\section{Proofs }

%
%

Throughout this article, for any positive integer $n$, we denote the set $ \{1, \ldots, n\}$ by $[n]$. For a map $f$, we let Im $f$ denote the image set of $f$. 

A simple graph $G$ is called {\it perfect} if, for every induced subgraph $H$ of $G$, the
number of vertices in a maximum clique is $\chi(H)$. The  {\it compliment graph} $G^c$  of $G$ is the graph on same vertex set as of $G$ and $E(G^c) = \{ (x,y) \ | \ (x,y) \notin E(G) \}$. The well known Strong Perfect Graph Theorem (\cite{Strongperfect}) says that $G$ is perfect if and
only if no induced subgraph of $G$ or $G^c$ is an odd cycle of length greater
than three. Using this characterization of perfect graphs we show that any graph satisfying the property $\mathcal{P}$ (defined in section \ref{subsec:neighborhood}) is perfect.

\begin{prop}  \label{perfect} 
	Every graph satisfying the property $\mathcal{P}$ is  perfect.
\end{prop}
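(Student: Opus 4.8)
The plan is to verify the two conditions of the Strong Perfect Graph Theorem directly from property $\mathcal{P}$. First I would observe that $\mathcal{P}$ is inherited by induced subgraphs: if $H = G[U]$, then the edges of $H$ are exactly the edges of $G$ with both endpoints in $U$, and the defining implication of $\mathcal{P}$ refers only to edges among the four vertices involved, so an instance of the configuration in $H$ is an instance in $G$ and conversely. Hence it suffices to show that no induced subgraph of $G$ is an odd cycle $C_m$ with $m \geq 5$ (no odd holes), and that no induced subgraph of $G^c$ is such a cycle (no odd antiholes); the latter is equivalent to $G$ containing no induced copy of the complement $\overline{C_m}$ for odd $m \geq 5$.

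For the first condition, suppose $G$ had an induced cycle $x_1 x_2 \cdots x_m$ with $m \geq 5$ (indices read cyclically). I would apply $\mathcal{P}$ to the disjoint edges $e_1 = (x_2, x_1)$ and $e_2 = (x_3, x_4)$, which are disjoint since $m \geq 4$ forces $x_1, x_2, x_3, x_4$ to be distinct. Taking $v_1 = x_2$, $w_1 = x_1$, $v_2 = x_3$, $w_2 = x_4$, the hypothesis $(x_3, x_2) \in E(G)$ holds, so $\mathcal{P}$ forces $(x_4, x_2) \in E(G)$ or $(x_4, x_1) \in E(G)$. But in an induced cycle of length $m \geq 5$ both $x_2 x_4$ and $x_1 x_4$ are chords, hence non-edges, a contradiction. (In fact this rules out induced cycles of every length $\geq 5$.)

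The main work, and the step I expect to be the real obstacle, is the antihole case: ruling out an induced $\overline{C_m}$ for odd $m \geq 5$. Label its vertices $1, \dots, m$ so that $i \sim j$ precisely when $i$ and $j$ are \emph{not} cyclically consecutive, i.e.\ $i - j \not\equiv \pm 1 \pmod m$; since the copy is induced, these are exactly the adjacencies in $G$. The difficulty is to choose four vertices forming two disjoint edges with the correct adjacency while forcing both ``chord candidates'' of $\mathcal{P}$ to land on the cyclically consecutive (non-edge) pairs. I would take $v_1 = 2$, $w_1 = m$, $v_2 = 4$, $w_2 = 1$. For $m \geq 5$ one checks that $e_1 = (2, m)$ and $e_2 = (4, 1)$ are edges (the distances $2$ and $3$ are not $\pm 1 \bmod m$), that $e_1, e_2$ are disjoint, and that the hypothesis $(v_2, v_1) = (4, 2) \in E(G)$ holds. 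Then $\mathcal{P}$ would demand $(w_2, v_1) = (1, 2) \in E(G)$ or $(w_2, w_1) = (1, m) \in E(G)$; but $\{1,2\}$ and $\{1, m\}$ are precisely cyclically consecutive pairs, hence non-edges of $\overline{C_m}$, a contradiction.

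With both conditions established, the Strong Perfect Graph Theorem recalled above (using that $\overline{C_5} \cong C_5$, so the case $m = 5$ is already covered) yields that $G$ is perfect. The only delicate point is the explicit choice of the four vertices in the antihole; the hole case and the heredity of $\mathcal{P}$ are routine.
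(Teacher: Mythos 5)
Your proof is correct and takes essentially the same route as the paper's: both verify the two conditions of the Strong Perfect Graph Theorem by exhibiting, in each forbidden configuration, two disjoint edges whose $\mathcal{P}$-conclusion would have to be a chord that cannot exist. The only differences are cosmetic: for the antihole you apply $\mathcal{P}$ to the vertices $1,2,4,m$, while the paper uses $1,2,3,5$ (edges $(3,1)$ and $(5,2)$ with hypothesis edge $(5,3)$, forcing $(2,1)$ or $(2,3)$), and you write out the odd-hole case explicitly, which the paper merely declares clear from the definition.
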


\begin{proof} Let $T$ be  a graph satisfying the  property $\mathcal{P}$. It is clear from the definition of property  $\mathcal{P}$ that no induced subgraph of $T$ is isomorphic to  an odd cycle of length greater than $3$.
	Suppose $T^c$ has an induced subgraph $H$ isomorphic to an odd cycle of length $r \geq 5$. Without loss of generality we assume that $V(H) = \{1, \ldots, r\}$ and  $E(H) = \{(i,i+1) | 1 \leq i \leq r-1\} \cup \{(1,r)\}$. Then, clearly $(1,3)$,$(2,5)$ and $(3,5)$ are edges in $T$, but neither $(2,1)$ nor $(2,3)$ is an edge in $T$, which is a contradiction to the fact that $T$ is satisfying the  property $\mathcal{P}$. 
\end{proof}

\subsection{Proofs of the results  of Section \ref{subsec:neighborhood}}

We first prove the following lemma, which plays a crucial role in the proof of Theorems \ref{main2} and  \ref{cormain}.

\begin{lem} \label{fold2} 
	
Let $m$ be a positive integer and 	let $T$ be a connected   graph having the property $\mathcal{P}$.  Let $f \in V(K_m^T)$ such that $f(v) = f(w) = a$ for some edge $(v,w) \in E(G)$. Then,  there exists $\tilde{f}$ such that $N_{K_m^T} (f) \subseteq N_{K_m^T}(\tilde{f})$ and $\tilde{f}(x) = a$ for all $x \in V(T)$. 
\end{lem}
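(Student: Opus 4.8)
The plan is to take $\tilde f$ to be the constant map with $\tilde f(x)=a$ for every $x\in V(T)$ and to verify the inclusion $N_{K_m^T}(f)\subseteq N_{K_m^T}(\tilde f)$ via a value-avoidance reformulation. First I would unwind the adjacency in the exponential graph: for $f,f'\in V(K_m^T)$ one has $f'\sim f$ precisely when $f(x)\neq f'(y)$ for every edge $(x,y)\in E(T)$ (since adjacency in $K_m$ is just ``$\neq$''). Because $T$ is connected and carries the edge $(v,w)$, every vertex has a neighbour, so the constant map $\tilde f$ is adjacent to $f'$ exactly when $a\notin\mathrm{Im}\,f'$. Hence it suffices to prove the single implication: if $f'\in N_{K_m^T}(f)$ then $a\notin\mathrm{Im}\,f'$.

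Assume for contradiction that $f'(u)=a$ for some $u\in V(T)$. I would first extract three adjacency facts. Reading the edge $(v,w)$, on which $f\equiv a$, against $f'\sim f$ gives $f'(v)\neq a$ and $f'(w)\neq a$, whence $u\notin\{v,w\}$. Moreover $u$ cannot be adjacent to $v$: an edge $(v,u)$ together with $f'\sim f$ would force $f(v)\neq f'(u)$, i.e. $a\neq a$, which is absurd; the identical argument with $(w,u)$ rules out $u\sim w$. Thus $u\neq v,w$ and $u\not\sim v$, $u\not\sim w$.

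The key geometric input is that property $\mathcal P$ forces $T$ to have diameter at most $2$: a shortest path between two non-adjacent vertices of length $\geq 3$ would be an induced $P_4$, and feeding its two end-edges into $\mathcal P$ (with the middle cross-adjacency as hypothesis) produces a forbidden non-adjacency. Hence any two non-adjacent vertices of $T$ have a common neighbour. Applying this to the non-adjacent pair $u,v$ yields a common neighbour $z$ with $u\sim z\sim v$; here $z\neq u,v$ automatically ($T$ is simple) and $z\neq w$, since $z=w$ would give $u\sim w$. Consequently the edges $(v,w)$ and $(z,u)$ are disjoint, and the cross-adjacency $z\sim v$ lets me invoke property $\mathcal P$ with $(v_1,w_1)=(v,w)$ and $(v_2,w_2)=(z,u)$: it forces $u\sim v$ or $u\sim w$, contradicting the facts established in the previous paragraph. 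This contradiction shows $a\notin\mathrm{Im}\,f'$, giving $f'\in N_{K_m^T}(\tilde f)$ and hence the desired inclusion.

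I expect the main obstacle to be precisely this diameter-two step: without it one would have to chase the value $a$ along a path of unbounded length, and it is the recognition that $\mathcal P$ collapses all such paths to length $2$ (equivalently, that $\mathcal P$ forbids induced $P_4$'s) which reduces the entire argument to one clean application of $\mathcal P$. A minor point that I would take care with is the disjointness of the two edges and the correct labelling of their endpoints, so that the hypothesis of $\mathcal P$, namely $v_2\sim v_1$, is matched by the available cross-edge $z\sim v$.
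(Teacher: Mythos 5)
Your proof is correct, but it follows a genuinely different route from the paper's. The paper proves the lemma by propagation: it defines an intermediate map $f^1$ equal to $a$ on $N_T(v)$ and equal to $f$ elsewhere, verifies $N_{K_m^T}(f)\subseteq N_{K_m^T}(f^1)$ using the same disjoint-edge application of $\mathcal{P}$ that you use (there with the edges $(v,w)$ and $(x,y)$, where $x\in N_T(v)$ and $y\sim x$), and then iterates, spreading the value $a$ outward one neighbourhood at a time; connectedness and finiteness of $T$ guarantee that this terminates at the constant map after finitely many folds. You instead argue in one shot: you isolate the structural fact that $\mathcal{P}$ together with connectedness forces diameter at most $2$ (a geodesic of length $3$ would give two disjoint edges violating $\mathcal{P}$, i.e.\ $\mathcal{P}$-graphs have no induced path on four vertices), reformulate adjacency to the constant map as value-avoidance ($f'\sim\tilde f$ iff $a\notin\mathrm{Im}\,f'$, using that every vertex of a connected graph on at least two vertices has a neighbour), and then a single application of $\mathcal{P}$ to the disjoint edges $(v,w)$ and $(z,u)$, with $z$ a common neighbour of the non-adjacent pair $u,v$, rules out any $u$ with $f'(u)=a$. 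The details check out: $f'(v)\neq a\neq f'(w)$, the non-adjacencies $u\not\sim v$, $u\not\sim w$, the exclusion $z\neq w$, and the endpoint labelling needed to invoke $\mathcal{P}$ are all correctly verified. What your argument buys is brevity and a structural insight the paper never makes explicit (connected $\mathcal{P}$-graphs have diameter at most $2$); what the paper's iterative scheme buys is robustness, since the same step-by-step fold is reused nearly verbatim in the layered setting of Lemma \ref{foldgeneral} for the graphs $T_A^r$, where the modification is confined to a single layer and the clean global statement ``no neighbour of $f$ ever takes the value $a$'' is no longer the natural formulation.
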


\begin{proof}
Define  $f^1 : V(T) \to [m]$  by   
	\begin{center}
		$f^1(x)= \begin{cases}
		a, & \text{if}  \ x \in N_T(v) ,\\
		f(x), & \text{otherwise}.\\
		\end{cases}$
	\end{center}
		Let $g \in N_{K_m^T}(f)$. We first show that $g \in N_{K_m^T}(f^1)$. Since $f^1(x) = f(x)$ for all $x \notin N_T(v)$, observe that to prove $g \sim f^1$, it is enough to show that for each $x \in N_T(v)$ and $y \sim x$, $g(x) \sim f^1(y)$ and $g(y) \sim f^1(x)$ in $K_m$. Let $x \in N_T(v)$ and $y \sim x$ in $T$. If $y \in N_T(v)$, then $f^1(y) = a$. In this case, since $x \sim v$ in $T$ and  $f\sim g$, we see that $g(x) \sim f(v) = a = f^1(y)$.  Now, if $y \notin N_T(v)$, then $f^1(y) = f(y)$ and therefore $g(x) \sim f(y) = f^1(y)$.  Hence, $g(x) \sim f^1(y)$ in $K_m$. 
		
		We now show that $g(y) \sim f^1(x)$ in $K_m$. Since, $f^1(x) = a$, it is sufficient to show that $g(y) \neq a$. If $x = w$, then $g \sim f$ implies that  $ g(y) \sim f(w)$ (in $K_m$). Hence, $g(y) \neq f(w) = a$. So assume that $x \neq w$. If $y \in \{v, w\}$, then since $(v, w) \in E(T)$ and $ f(v) = f(w) = a$, we see that  $g \sim f \implies g(y) \neq a$.  If $y \notin \{v, w\}$, then since $(v,x)$, $(x, y)$ belong to $E(T)$ and $T$  is satisfying the property $\mathcal{P}$, we conclude that either $(y,v)$ or $(y,w)$ is an edge in $T$. Now, $f \sim g$ implies that $g(y) \neq a$.

	Thus, $g \sim f^1$ and therefore $N_{K_m^T}(f) \subseteq N_{K_m^T}(f^1)$.  Now, we apply the above argument  for $f^1$ and get a $f^2 \in V(K_m^T)$ such that $N_{K_m^T} (f^1) \subseteq N_{K_m^T}(f^2)$
	and $f^2(x) = a$ for all $x \in N_T(v) \bigcup\limits_{y \in N_T(v)}N_T(y)$.  Since $T$ is a finite connected graph, after finite number of steps,  we get a $f^k \in V( K_m^T)$ such that $N_{K_m^T}(f) \subseteq N_{K_m^T}(f^1)$, $N_{K_m^T}(f^1) \subseteq N_{K_m^T}(f^2), \ldots, N_{K_m^T}(f^{k-1}) \subseteq N_{K_m^T}(f^k)$ and $f^k(x) = a$ for all $x \in V(T)$. We take $\tilde{f} = f^k$.


\end{proof}

\begin{proof}[Proof of Theorem \ref{main2}]
	
	Since, $m < \chi(T)$, there exists no graph homomorphism from $T$ to $K_{m}$. Hence, for any $f \in V(K_{m}^{T})$, there exists and edge $(v, w) \in E(T)$ such that $f(v) = f(w)$ and therefore from Lemma \ref{fold2}, $K_{m}^{T}$ is folded onto a subgraph $\frT_1$ where each vertex of $\frT_1$ is a constant map from $V(T)$ to $[m]$.  Observe that $\frT_1$ is isomorphic to the complete graph $K_{m}$. Since, folding preserve the chromatic number, $\chi(K_{m}^T) = \chi(\frT_1) = m$. 
	
	Since,  $\N(G) \simeq $ Hom$(K_2, G)$, from Proposition \ref{prop:folding}, we conclude that $\N(K_m^T) \simeq \N(\frT_1) \simeq \N(K_m) \simeq \mathbb{S}^{m-2}$. Now, since $conn(\mathbb{S}^{m-2}) = m-3$ result follows.
\end{proof}

\begin{rmk} In the proof of Theorem  \ref{main2}, we have shown that $\chi(K_m^T) = m$ for $m < \chi(T)$. But, this is directly follows from Theorem \ref{thm:BEL}, by using the following argument.  
	 From Proposition \ref{perfect}, $T$ is a perfect graph and therefore it contains a clique of order $\chi(T)$. Hence, Theorem \ref{thm:BEL} implies that $\chi(T \times H) = \min \{\chi(T), \chi(H)\}$ for any graph $H$. In particular $\chi(T \times K_m^T) = \min\{\chi(T), \chi(K_m^T)\}$. Since the map $\phi: T \times K_m^T \to K_m$ defined by $\phi(x, f) = f(x)$ is a graph homomorphism, we see that $\chi(T \times K_m^T) \leq  m$. But, since $\chi(K_m^T) \geq m$ and $\chi(T) > m$, $\chi(T \times K_m^T) = \min\{\chi(T), \chi(K_m^T)\}$ implies that $\chi(K_m^T) = m$.
	\end{rmk}

In the rest of the section, we assume that $T$ is a connected graph satisfying the property $\mathcal{P}$, $m$ and $r$ are  positive integers, $A \subseteq \{0, \ldots, r-1\}$ and  $\mathfrak{T} = T_A^r$. For any $f \in V(K_m^{\frT})$ and $0 \leq l \leq r-1$, we define $f_l : V(T) \to [m]$ by 
$f_l(x) = f((x,l))$ for all $x \in V(T)$.

\begin{lem} \label{foldgeneral}
Let $f \in V(K_m^{\frT})$ such that $f_l(v) = f_l(w) =a$ for some $(v,w) \in E(T)$ and some $0 \leq l \leq r-1$.  There exists  $\tilde{f}\in V(K_m^{\frT})$ such that $\tilde{f}_l(x) =  a$ for all $x \in V(T)$ and $N_{K_m^{\frT}}(f) \subseteq N_{K_m^{\frT}} (\tilde{f})$.
	\end{lem}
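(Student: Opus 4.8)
The plan is to mimic the proof of Lemma \ref{fold2}, localizing the entire construction to the single level $l$. Recall that $\frT = T_A^r$ is obtained by stacking copies of $T$ at the levels $0, \ldots, r-1$ (with horizontal $T$-edges present at level $j$ exactly when $j \in A$) together with one apex vertex $*$ arising from the identification at level $r$; every edge of $\frT$ between two genuine levels, and every edge to $*$, is governed by an edge of $T$. Because of this, the property-$\mathcal{P}$ argument of Lemma \ref{fold2} should transfer once we keep careful track of levels.

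First I would define $f^1 \in V(K_m^{\frT})$ to agree with $f$ everywhere except that $f^1((x,l)) = a$ for every $x \in N_T(v)$; that is, $f$ is altered only at level $l$, exactly as in Lemma \ref{fold2} but carrying the extra coordinate. I claim $N_{K_m^{\frT}}(f) \subseteq N_{K_m^{\frT}}(f^1)$. Fix $g \sim f$. Since $f^1$ and $f$ differ only on the modified vertices $\{(x,l) : x \in N_T(v)\}$, to prove $g \sim f^1$ it suffices to show, for every modified vertex $(x,l)$ and every neighbor $p$ of $(x,l)$ in $\frT$, that $g(p) \neq a$ (this is precisely the $K_m$-condition $g(p) \sim f^1((x,l)) = a$; all remaining edge-conditions coincide with those already guaranteed by $g \sim f$).

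The neighbors $p$ of $(x,l)$ are of the form $p = (y, l-1)$, $p = (y, l+1)$, or $p = (y,l)$ (the last only when $l \in A$), each with $y \sim x$ in $T$, together with $p = *$ when $l = r-1$. For the apex, $* \sim (v, r-1)$ and $f((v,r-1)) = a$, so $g(*) \neq a$ at once. For the level-type neighbors the goal in each case is to exhibit an edge of $\frT$ joining $p$ to a vertex of $f$-value $a$; the candidates are $(v,l)$ and $(w,l)$, which are $\frT$-adjacent to $p$ exactly when $y \sim v$, respectively $y \sim w$, in $T$ (using $l-1 \sim l$, $l+1 \sim l$, or the loop at $l \in A$). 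If $y \in \{v,w\}$ the required edge is $(v,w)$ lifted to the two relevant levels; otherwise $x \sim v$ and $x \sim y$, and applying property $\mathcal{P}$ to the disjoint edges $(v,w)$ and $(x,y)$ (disjointness holds since $y \notin \{v,w\}$ and, as $y \not\sim w$ in this subcase, $x \neq w$) yields $y \sim v$ or $y \sim w$. In every case $g(p) \sim f(\,\cdot\,) = a$, hence $g(p) \neq a$, establishing $g \sim f^1$.

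Finally I would iterate as in Lemma \ref{fold2}: since $f^1((v,l)) = f_l(v) = a$ (note $v \notin N_T(v)$ because $T$ is simple) and $f^1((w',l)) = a$ for every $w' \in N_T(v)$, each edge $(v,w')$ is again a level-$l$ monochromatic edge, so repeating the construction spreads the value $a$ to $\bigcup_{w' \in N_T(v)} N_T(w')$, and so on. As $T$ is finite and connected, after finitely many steps one obtains $f^k$ with $f^k_l(x) = a$ for all $x \in V(T)$ and $N_{K_m^{\frT}}(f) \subseteq N_{K_m^{\frT}}(f^k)$; I take $\tilde f = f^k$. The main obstacle is the level bookkeeping — confirming that $(v,l)$ and $(w,l)$ remain adjacent to each type of neighbor $p$, including the same-level neighbors when $l \in A$ and the apex when $l = r-1$ — but since every inter-level adjacency of $\frT$ reduces to a $T$-adjacency, the property-$\mathcal{P}$ core of Lemma \ref{fold2} applies unchanged.
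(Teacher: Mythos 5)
Your proposal is correct and takes essentially the same route as the paper's proof: the identical one-step modification $f^1$ (resetting the value to $a$ on $N_T(v)\times\{l\}$), the identical use of property $\mathcal{P}$ applied to the disjoint edges $(v,w)$ and $(x,y)$ to produce $y\sim v$ or $y\sim w$, and the identical finite iteration using connectedness of $T$. The differences are only presentational: your reduction to the one-sided condition $g(p)\neq a$ is a slightly cleaner bookkeeping of the exponential-graph adjacency, and your handling of the degenerate case $x=w$ (via the observation that once one may assume $y\not\sim w$, necessarily $x\neq w$, so disjointness holds) replaces the paper's treatment of $x_1=w$ as a separate leading case, while your explicit apex and same-level ($l\in A$) checks make precise what the paper leaves implicit.
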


\begin{proof}
	The proof is similar to that of  proof of the Lemma \ref{fold2}. 
	Define $f^1 : V(\frT) \to [m]$ by  
	\begin{center}
		$f^1(x)= \begin{cases}
		a, & \text{if}  \ x \in N_T(v) \times \{l\} ,\\
		f(x), & \text{otherwise}.\\
		\end{cases}$
	\end{center}
	
	Let $h \in N_{K_m^{\frT}}(f)$. We first show that $h \sim f^1$ in $K_m^{\frT}$. Here, it is enough to show that for each $x \in N_{T}(v) \times \{l\}$ and $y \in N_{\frT}(x)$, $h(x) \sim f^1(y)$ and $h(y) \sim f^1(x)$ (in $K_m$). Let $x= (x_1,l)$ and $y = (y_1, j)$ for some $x_1, y_1 \in V(T)$, $1 \leq j  \leq r$ such that $x \in N_{T}(v) \times \{l\}$ and $y \in N_{\frT}(x)$.  Since, $(x,y) \in E(\frT)$, we see that $(j,l) \in E(L_r(A))$ and $(x_1, v) \in E(T)$. 
	If $y \in N_T(v) \times \{l\}$, then $f^1(y) = a$. In this case, $y \sim x$ in $\frT$ and $x_1 \sim v$ in $T$ implies that  $x \sim (v,l)$ in $\frT$. Hence,  $h\sim f$ implies that $h(x) \sim f((v,l)) = f_l(v) = a = f^1(y)$.  Now, if $y \notin N_T(v) \times \{l\}$, then $f^1(y) = f(y)$ and therefore $h(x) \sim f(y) = f^1(y)$.  Hence, $h(x) \sim f^1(y)$ in $K_m$.

	Here, $f^1(x) = a$. Hence, to show that $h(y) \sim f^1(x)$, it is enough to prove that $h(y) \neq a$. If $x_1 = w$, then $x = (w,l)$ and therefore $h \sim f$ implies that $h(y) \neq  f(x) = f((w,l) )= f_l(w)= a$.  If $y_1 \in \{v, w\}$, then either $((y_1,j), (v,l)) \in E(\frT)$ or $((y_1,j), (w,l)) \in E(\frT)$.
	Since, $f((v,l)) = f((w,l)) = a$ and  $h \sim f$, we see that  $h((y_1,j)) \neq a$.  So, assume that $x _1 \neq w$ and $y_1 \notin \{v, w\}$. Since $(w,v), (v, x_1)$, $(x_1, y_1)$ are edges in $T$  and $T$  is satisfying  the property $\mathcal{P}$, we conclude that either $(y_1, v)$ or $(y_1, w)$ is an edge in $T$. Hence, either $((y_1, j), (v,l)) \in E(\frT)$ or $((y_1, j), (w,l)) \in E(\frT)$. Now, $f \sim h$ implies that $h((y_1,j)) \neq a$.

	Thus $N_{K_m^{\frT}}(f) \subseteq N_{K_m^{\frT}}(f^1)$.  We now  apply the above argument  on $f^1$ and get a $f^2$ such that $N_{K_m^{\frT}} (f^1) \subseteq N_{K_m^{\frT}}(f^2)$
	and $f^2((x,l)) = a$ for all $x \in N_T(v) \bigcup\limits_{y \in N_T(v)}N_T(y)$. Since $T$ is finite connected graph, after finite number of steps,  we get a $f^k \in K_m^{\frT}$ such that $N_{K_m^{\frT}}(f) \subseteq N_{K_m^{\frT}}(f^1)$, $N_{K_m^{\frT}}(f^1) \subseteq N_{K_m^{\frT}}(f^2), \ldots, N_{K_m^{\frT}}(f^{k-1}) \subseteq N_{K_m^{\frT}}(f^k)$ and $f^k((x,l)) = a$ for all $x \in V(T)$. We take $\tilde{f} = f^k$.
	
\end{proof}


\begin{lem} \label{neighborhood}
	 Let $0 \leq l \leq r-1$ and let $f \in V (K_m^{\frT})$ such that the map $f_l$ is a graph homomorphism from $T$ to $K_m$. 
	 If $m \leq \chi(T)$, then $N_{K_m^{\frT}}(f) = \emptyset$. 

\end{lem}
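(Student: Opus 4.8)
The plan is to argue by contradiction, extracting from the hypotheses a clique on which $f_l$ realizes all $m$ colors, and then propagating this ``rainbow'' clique up the levels of $\frT$ until it collides with the apex. First I would fix the palette: since $f_l$ is a graph homomorphism $T \to K_m$ we have $\chi(T) \le m$, and together with $m \le \chi(T)$ this forces $\chi(T) = m$. By Proposition \ref{perfect}, $T$ is perfect, so $\omega(T) = \chi(T) = m$ and $T$ contains an $m$-clique $Q = \{q_1, \dots, q_m\}$. Because $f_l$ is a proper coloring it is injective on $Q$, hence a bijection $Q \to [m]$, and after relabeling I may assume $f_l(q_i) = i$ for all $i$. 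I also record the meaning of adjacency in the exponential graph: $g \sim f$ in $K_m^{\frT}$ precisely when, for every edge $(u,u') \in E(\frT)$, one has $g(u) \neq f(u')$ and $g(u') \neq f(u)$.

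Now suppose for contradiction that some $g \in N_{K_m^{\frT}}(f)$ exists. The key point is that the vertices $(q_1,l'),\dots,(q_m,l')$ sit at each level $l'$, and that $(q_j, l'+1)$ is joined in $\frT$ to every $(q_i, l')$ with $i \neq j$ (as $q_i \sim q_j$ in $T$ and $(l',l'+1) \in E(L_r(A))$). I would then run an induction on $t = l'-l \ge 0$ showing that the labeling $q_j \mapsto j$ is realized \emph{alternately} by $f$ and $g$ along the levels. At the base, $f((q_j,l)) = f_l(q_j) = j$. For the step, if $f((q_i, l+t)) = i$ for all $i$, then the adjacency condition gives $g((q_j, l+t+1)) \neq f((q_i, l+t)) = i$ for every $i \neq j$, and since these exhaust $[m]\setminus\{j\}$ we are forced to $g((q_j, l+t+1)) = j$; the same step with the roles of $f$ and $g$ exchanged advances one further level. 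Thus for every $l \le l' \le r-1$, either $f$ or $g$ takes the value $j$ at $(q_j, l')$ for all $j \in [m]$.

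Finally I would close at the apex. The identified level-$r$ vertex $*$ is adjacent to every $(q_j, r-1)$, since each $q_j$ has a neighbor in $Q$ (using $m \ge 2$). Whichever of $f, g$ realizes all $m$ colors on $\{(q_j, r-1)\}$ then forces the corresponding apex value to avoid every element of $[m]$: if $g((q_j,r-1)) = j$ for all $j$ then the edge $(q_j,r-1)\sim *$ gives $f(*) \neq j$ for all $j$, while if $f((q_j,r-1)) = j$ for all $j$ it gives $g(*) \neq j$ for all $j$; either way the apex has no admissible color in $[m]$, a contradiction. Hence $N_{K_m^{\frT}}(f) = \emptyset$. (The degenerate case $m=1$ forces $\chi(T)=1$, so $T$ is a single vertex and $\frT$ is edgeless, and the conclusion holds trivially.)

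The main obstacle, and the step I would think hardest about, is choosing the right object to propagate. A naive attempt to force the entire map $g$ on a whole level from $f$ on the previous level fails, because a vertex of $T$ need not see all other colors among its neighbors, so the ``avoidance'' constraints do not pin $g$ down. Restricting attention to an $m$-clique is exactly what makes the forcing \emph{exact} and iterable, and Proposition \ref{perfect} (that is, property $\mathcal{P}$) is precisely what guarantees such a clique exists; the alternation of $f$ and $g$ up the levels, terminating at the apex, is the crux that converts surjectivity at the single level $l$ into a genuine contradiction.
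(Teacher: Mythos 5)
Your proof is correct and follows essentially the same route as the paper's: perfectness (Proposition \ref{perfect}) supplies an $m$-clique of $T$ on which $f_l$ is a bijection onto $[m]$, this rainbow pattern then propagates up the levels of $\frT$ alternately through $f$ and the putative neighbor, and the apex vertex produces the contradiction. The only discrepancy is your parenthetical on $m=1$: there the conclusion does not in fact hold trivially (a single-vertex $T$ makes $\frT$ edgeless, so the unique vertex of $K_1^{\frT}$ is vacuously adjacent to itself), but the lemma itself fails in that degenerate case and the paper silently restricts to $m \geq 2$, which is the only case in which the lemma is ever applied.
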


\begin{proof}
	
		 Let $n = \chi(T)$. From Proposition \ref{perfect}, $T$ is a perfect graph and therefore it contains a subgraph isomorphic to the complete graph  $K_n$. Without loss of generality we can assume that $K_n$ is a subgraph of $T$, i.e., $V(K_n) = [n] \subseteq V(T)$.

	Suppose $N_{K_m^{\frT}}(f) \neq \emptyset$ and  let $h \in N_{K_m^{\frT}}(f)$.  For $0 \leq i \leq r-1$, let $\tilde{f}_i$ and $\tilde{h}_i$ are restrictions of $f_i$ and $h_i$ on $K_n$ respectively. Since $f_l$  is a graph homomorphism from $T$ to $K_m$, $\tilde{f}_l$ is also a graph homomorphism from $K_n$ to $K_m$ and therefore    $\tilde{f}_l(i) \neq \tilde{f}_l(j)$ for all $1 \leq i \neq j \leq n$. 
	
 Since $m \leq \chi(T) = n$ and $\tilde{f}_l$ is a graph homomorphism, we conclude that $m = n$ and 
	 Im $\tilde{f}_l =[m]$. For any $1 \leq i \neq j \leq n$, since $(i,l+1) \sim (j,l)$ in $\frT$ and $f \sim h$ in $K_m^{\frT}$, we have $h((i,l+1)) \neq f((j,l))$. Hence, $\tilde{h}_{l+1}(i) \neq \tilde{f}_l(j)$ for $i \neq j$ and therefore  $\tilde{h}_{l+1} (i)= \tilde{f}_l(i)$ for all $1 \leq i \leq n$. Thus, $\tilde{h}_{l+1} = \tilde{f}_l$ and Im $\tilde{h}_{l+1} =[m]$.

For any $i \neq j$, since $(i, l+2) \sim (j, l+1)$, we see that $f((i,l+2)) \neq h((j,l+1 ))$. Using the fact that $\tilde{h}_{l+1} = \tilde{f}_l$, we conclude that $\tilde{f}_{l+2} = \tilde{h}_{l+1} = \tilde{f}_l$ and therefore Im $\tilde{f}_{l+2}= [m]$.	By applying the similar argument, since $(i,l+2) \sim (j,l+3)$ for all $1 \leq i \neq j \leq n$, we conclude that 
Im $\tilde{h}_{l+3} = [m]$ and $\tilde{h}_{l+3} = \tilde{f}_{l+2} = \tilde{h}_{l+1} = \tilde{f}_l$.
	
By 	repeating the above argument, we observe that Im $\tilde{f}_{l+t} = [m]$ for all even $t$ such that $l+ t \leq r-1$ and Im $\tilde{h}_{l+s}= [m]$, for all  odd $s$ such that $l+s \leq r-1$. Hence, either  $\text{Im} \ \tilde{f}_{r-1}= [m]$ or $\text{Im} \ \tilde{h}_{r-1} = [m]$.
	
	Let $w = (\ast, r)$ be the vertex of $T_A^r$, obtained by identifying all vertices  whose second coordinate is $r$.
Since, $w \sim (i,r-1)$ for all $1 \leq i \leq n$ and either $\text{Im} \ \tilde{f}_{r-1}= [m]$ or Im $\tilde{h}_{r-1} = [m]$, we see that $f \sim h \implies$ either  $h(w) \notin [m]$ or $f(w) \notin [m]$, which is a contradiction. Thus $N_{K_m^{\frT}}(f) = \emptyset$.

	\end{proof}

Using the above two lemmas we prove the following more general result, from which  proof of Theorem \ref{cormain} follows easily.

\begin{thm} \label{generalmain}
Let $r$ be  a positive integer and 	let $T$  be  a connected graph having the property $\mathcal{P}$. Then for any   $A \subseteq \{0, 1\ldots, r-1\}$  and  $2 \leq m \leq \chi(T)$, 
	$$
	\N(K_m^{T_A^r}) \simeq \N(K_m^{L_r(A)}).
	$$

\end{thm}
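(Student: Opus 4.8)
The plan is to exhibit inside $K_m^{\frT}$ an induced subgraph consisting of the maps that are constant on each level and then to fold the whole exponential graph onto it. Call a vertex $f \in V(K_m^{\frT})$ \emph{level-constant} if there is a function $c \colon \{0, \ldots, r\} \to [m]$ with $f((x,l)) = c(l)$ for all $x \in V(T)$ and all $0 \le l \le r-1$, and $f(\ast) = c(r)$, where $\ast$ denotes the vertex obtained by identifying all vertices whose second coordinate is $r$. Let $\mathcal{C}$ be the induced subgraph of $K_m^{\frT}$ spanned by the level-constant vertices. My first step is to verify that $f \mapsto c$ is a graph isomorphism $\mathcal{C} \cong K_m^{L_r(A)}$, noting that $V(L_r(A)) = \{0,\ldots,r\}$. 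Since $m \le \chi(T)$ forces $\chi(T) \ge 2$, the connected graph $T$ has at least one edge, so every edge $(l,j) \in E(L_r(A))$ (including the loops coming from $A$ and the edge into level $r$) is realized by some edge of $\frT$ lying over it, while conversely every edge of $\frT$ projects to an edge of $L_r(A)$. Hence for level-constant $f, f'$ one has $f \sim f'$ in $K_m^{\frT}$ exactly when $c(l) \ne c'(j)$ for all $(l,j) \in E(L_r(A))$, which is precisely adjacency in $K_m^{L_r(A)}$.

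The heart of the argument is to show that every vertex of $K_m^{\frT}$ folds onto a level-constant one. Given $f \in V(K_m^{\frT})$, I would distinguish two cases. If $N_{K_m^{\frT}}(f) = \emptyset$, then $N(f) \subseteq N(g)$ for any $g$, so $f$ folds onto an arbitrarily chosen vertex of $\mathcal{C}$ (equivalently, it may be discarded without affecting $\N$). If $f$ is not isolated, then by Lemma \ref{neighborhood} no restriction $f_l$ with $0 \le l \le r-1$ is a graph homomorphism $T \to K_m$; consequently every level $l$ carries a monochromatic edge, i.e.\ an edge $(v,w) \in E(T)$ with $f_l(v) = f_l(w)$. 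I can therefore apply Lemma \ref{foldgeneral} successively at $l = 0, 1, \ldots, r-1$. The key point is that the construction in Lemma \ref{foldgeneral} alters $f$ only on level $l$, so making one level constant neither destroys the monochromatic edge present on any other level nor changes the value on $\ast$; after $r$ applications I obtain $\hat f$ that is constant on every level $0, \ldots, r-1$, retains $f(\ast)$, and satisfies $N_{K_m^{\frT}}(f) \subseteq N_{K_m^{\frT}}(\hat f)$. Thus $\hat f \in V(\mathcal{C})$ and $f$ folds onto it.

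Finally I would assemble these individual folds into a homotopy equivalence. I fold away the vertices of $V(K_m^{\frT}) \setminus V(\mathcal{C})$ one at a time, each onto its target $\hat f \in V(\mathcal{C})$. Deleting a vertex only shrinks neighborhoods, and the inclusion $N(f) \subseteq N(\hat f)$ is preserved under deletion of any third vertex provided the target $\hat f$ survives; since targets always lie in $\mathcal{C}$ and are never deleted, each step is a legitimate fold. Using $\N(G) \simeq \mathrm{Hom}(K_2, G)$ together with Proposition \ref{prop:folding}, every fold preserves the homotopy type of the neighborhood complex, and after finitely many steps the graph is reduced to $\mathcal{C}$. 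Hence $\N(K_m^{\frT}) \simeq \N(\mathcal{C}) \cong \N(K_m^{L_r(A)})$. I expect the main obstacle to be the bookkeeping in this last step, namely justifying that the folds can be carried out in sequence (that targets persist and that the neighborhood containments survive repeated vertex deletions), rather than any single computation, since Lemmas \ref{foldgeneral} and \ref{neighborhood} already supply the substantive work.
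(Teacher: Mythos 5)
Your proposal is correct and takes essentially the same route as the paper's proof: fold using Lemma \ref{foldgeneral}, use Lemma \ref{neighborhood} to dispose of vertices having a homomorphism level, identify the subgraph of level-constant maps with $K_m^{L_r(A)}$ (using that $T$ has an edge), and conclude via $\N(G) \simeq \mathrm{Hom}(K_2,G)$ and Proposition \ref{prop:folding}. The only differences are cosmetic --- the paper folds in two stages (first onto maps whose levels are constant or homomorphisms, then drops the isolated homomorphism vertices), while you fold in one stage directly onto the level-constant subgraph; like the paper, your iteration quietly uses the construction in the proof of Lemma \ref{foldgeneral} (that $\tilde f$ agrees with $f$ off level $l$), not merely its statement.
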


 		\begin{proof}
 			
 		By 	using Lemma \ref{foldgeneral}, $K_m^{T_A^r}$ is folded onto an induced subgraph $\frT_1$
 		where $f \in V(\mathfrak{T}_1)$ if and only if, for each $0 \leq l \leq r-1$, the map $f_l : V(T) \to [m]$ defined by $f_l(x) = f((x,l))$ is either a constant map or a graph homomorphism from $T$ to $K_m$. For any $f \in V(\frT_1)$ such that $f_l$ is non-constant for some $0 \leq l \leq r-1$, $N_{\frT_1}(f) = \emptyset$ from Lemma \ref{neighborhood}. 
 		Hence,  $\N(\frT_1) \simeq \N(\frT_2)$, where $\frT_2$ is the induced subgraph of $\frT_1$ on the set of vertices $V(\frT_2) = \{f \in V(\frT_1) \ | \ f_l \ \text{is a constant map} \ \forall \ 0 \leq l \leq r-1\}$.
 
 	Since $\chi(T) \geq 2$, $T$ contains at least one edge and using this fact it is  easy to see that   $\frT_2 \cong K_m^{L_r(A)}$. Since $\N(G) \simeq $ Hom$(K_2, G)$, using Proposition \ref{prop:folding}, we conclude that $\N(K_m^{T_A^r}) \simeq \N(\frT_1) \simeq \N(\frT_2) \simeq  \N(K_m^{L_r(A)})$.

 			\end{proof}

 \begin{proof}[Proof of Theorem \ref{cormain}]

From Theorem \ref{generalmain}, $\N(K_m^{T_A^r}) \simeq \N(K_m^{L_r(A)})$. Since $A = \{0, \ldots, i\}$, we observe that $K_2 \times L_r(A)$
is folded onto a subgraph isomorphic to $K_2$. Hence, from Propositions \ref{prop:folding} and \ref{prop:shifting}, we conclude that  $\N(K_m^{T_A^r}) \simeq \N(K_m^{L_r(A) } ) \simeq$ Hom$(K_2, K_m^{L_r(A)} )\simeq$ 
	Hom$(K_2 \times L_r(A), K_m) \simeq$ Hom$(K_2, K_m) \simeq   \N(K_m) \simeq \mathbb{S}^{m-2}$. 
 	 \end{proof}

 \begin{proof}[Proof of Corollary \ref{corsharpe}]

 		Let $\chi(T) = n$. We first show that $\chi(K_{m}^{T_A^r}) = m$. Since $T$ is a perfect graph, it has a subgraph isomorphic to $K_n$. Without loss of generality we assume that $K_n$ is a subgraph of $T$. 	From Lemma \ref{foldgeneral}, $K_{m}^{T_A^r}$ is folded onto a subgraph $\frG$, where $f \in V(\frG)$ if and only if, for each $0 \leq l \leq r-1$, the map  $f_l$
 	is either a constant map or  a graph homomorphism from $T$ to $K_{m}$. Choose $0 \leq l \leq r-1$ such that $l \in A$. Define $\phi: V(\frG) \to [m]$ by

 	\begin{center}
 		$\phi(f)= \begin{cases}
 		f_l(1) & \text{if}  \ f_l \ \text{is a constant map},\\
 		1, & \text{otherwise}.\\
 		\end{cases}$
 	\end{center}	
 	
 	Clearly $\phi$ is well defined. 	Let $f , h \in V(\frG)$ such that $f \sim h$. For any $g \in V(\frG)$ such that $g_i$  is non-constant for some $0 \leq i \leq r-1$,  $N_{K_m^{T_A^r}}(g) = 0$  from Lemma \ref{neighborhood}  and therefore $N_{\frG}(g) \subseteq N_{K_{m}^{T_A^r}}(g)$ implies that $N_{\frG}(g) = \emptyset$.  Hence, $f_l$ and $h_l$ must be  constant maps.  Since $n \geq 2$,
 	and $(1, l) \sim (2, l)$ in $T_A^r$, $f((1, l)) \neq h((2, l)) = h((1,l))$. Hence $\phi(f) \neq \phi(h)$.  It follows that 
 	$\phi$ is a graph homomorphism. 
 	
 	Thus, $\chi(\frG) \leq m$. Since,  $\frG$ contains a subgraph isomorphic to $K_{m}$, namely induced by all constant maps, we conclude that 
 	$\chi(\frG) = m$. Thus, $\chi(K_{m}^{T_A^r}) = \chi(\frG) = m$.

 Since, $conn(\mathbb{S}^{m-2}) = m-3$, result follows from Theorem \ref{cormain}.

 	\end{proof}

\begin{rmk} 
		In the proof of Corollary \ref{corsharpe}, we have shown that  $\chi(K_m^{T_A^r}) = m$ for  $m \leq \chi(T)$. But, this is directly follows from Theorem \ref{thm:moroli}. For, first observe that $T_A^r$ contains $M_r(K_n)$ as a  subgraph, where $n = \chi(T)$. Since, $M_r(K_n)$ satisfies the condition of Theorem \ref{thm:moroli}, we conclude that $\chi(K_m^{M_r(K_n)}) = m$ for all $m \leq n$. Now, since $i^{\ast} : K_m^{T_A^r} \to K_m^{M_r(K_n)}$ defined by $i^{\ast}(f)(x) = f(x)$ is a graph homomorphism, it follows that $\chi(K_m^{T_A^r}) = m$.

\end{rmk}

We now fix some notations.  Let $m$ and $n$ be positive integers such that $m  \leq n+1$. For any $f \in V(K_m^{M(M(K_n))})$ and $i, j \in \{0,1\}$, we  let  $f_{i,j}: [n] \to [m]$  defined by $f_{i,j}(x) = f(x,i,j)$ for all $x \in [n]$. 
For $i \in \{0,1\}$, we let  $w_i= (\ast, i) \in V(M(M(K_n)))$, where $\ast \in V(M(K_n))$ obtained by identifying all the vertices whose second coordinate is $2$. Let $w_2 \in V(M(M(K_n)))$  obtained by identifying all the vertices whose second coordinate is $2$, i.e., by identifying all  the vertices of the form $(x,2)$ where $x \in V(M(K_n))$.


\begin{lem} \label{folddouble}
	Let $f \in V(K_m^{M(M(K_n))})$. Let  there exist $p, q \in \{0,1\}$ and $i_0, i_1 \in [n], i_0 \neq i_1$
	such that $f_{p,q}(i_0) = f_{p,q}(i_1)$. Then there exists $\tilde{f} \in V(K_m^{M(M(K_n))})$  such that $\tilde{f}_{p,q}(j) = f_{p,q}(i_0)$ for all $j \in [n]$ and $N_{K_m^{M(M(K_n))}}(f) \subseteq N_{K_m^{M(M(K_n))}}(\tilde{f})$.
\end{lem}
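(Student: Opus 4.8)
The plan is to define $\tilde f$ in a single step and verify directly that enlarging $f$ to a map which is constant on the whole slice does not shrink its neighborhood. Write $a = f_{p,q}(i_0) = f_{p,q}(i_1)$ and let $S = \{(x,p,q) : x \in [n]\}$ be the slice indexed by $(p,q)$. Set
\[
\tilde f(x,p,q) = a \quad (x \in [n]), \qquad \tilde f(u) = f(u) \ \text{for every other vertex } u .
\]
Then $\tilde f \in V(K_m^{M(M(K_n))})$, and $\tilde f_{p,q}(j) = a = f_{p,q}(i_0)$ for all $j \in [n]$ by construction, so only the containment $N_{K_m^{M(M(K_n))}}(f) \subseteq N_{K_m^{M(M(K_n))}}(\tilde f)$ requires work. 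Since two maps are adjacent in $K_m^{M(M(K_n))}$ exactly when they send the two ends of every edge of $M(M(K_n))$ to distinct values of $K_m$, for $g \sim f$ I must check, for each edge $\{u,y\}$ with an endpoint in $S$, that $g(u) \ne \tilde f(y)$ and $g(y) \ne \tilde f(u)$. As $\tilde f$ and $f$ differ only on $S$, the only conditions not already supplied by $g \sim f$ are of the form $g(y) \ne a$ for a vertex $y$ adjacent to some slice vertex, together with (when $S$ induces a clique) $g(u) \ne a$ for $u \in S$.

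The combinatorial heart of the argument is the following observation, which I would isolate as a claim. Fix $(p,q)$ and let $y$ be adjacent to at least one member of $S$. Then $y$ fails to be adjacent to \emph{at most one} member of $S$. Indeed, the edges of a generalized Mycielskian between two consecutive levels form a complete bipartite graph minus the perfect matching on the $[n]$-coordinate, while each apex $w_0, w_1, w_2$ is adjacent either to all or to none of the vertices sharing a fixed pair of levels; hence the adjacency of $y$ to $(i,p,q)$ depends on $i$ only through whether $i$ equals the single $[n]$-coordinate determined by $y$ (when $y$ has one) and is otherwise constant in $i$. In every case $\{i \in [n] : (i,p,q) \not\sim y\}$ has at most one element. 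Because $i_0 \ne i_1$, it follows that $y$ is adjacent to $(i_0,p,q)$ or to $(i_1,p,q)$; since $f$ takes the value $a$ at both of these and $g \sim f$, we obtain $g(y) \ne a$. I would prove the claim by running through the four cases $(p,q) \in \{0,1\}^2$ and reading off the neighbors of a typical slice vertex from the explicit adjacencies of the two nested Mycielskians.

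Granting the claim, the containment is immediate. For an edge $\{u,y\}$ with exactly one endpoint, say $y \in S$, in the slice, the condition $g(u) \ne \tilde f(y) = a$ holds because $u$ is adjacent to a slice vertex and hence, by the claim, to one of $(i_0,p,q),(i_1,p,q)$, while the remaining condition $g(y) \ne \tilde f(u) = f(u)$ is part of $g \sim f$. When $(p,q) = (0,0)$ the slice induces a clique $K_n$, so I must also treat edges with both ends in $S$: here the needed inequality $g(u) \ne a$ for $u = (x,0,0)$ holds because $u$ is joined inside this clique to $(i_0,0,0)$ (or to $(i_1,0,0)$ if $u = (i_0,0,0)$), both carrying the value $a$. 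Combining the cases yields $g \sim \tilde f$, hence the desired containment.

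The main obstacle I anticipate is bookkeeping rather than conceptual: correctly extracting, from the two successive Mycielskian constructions, the neighbors of a slice vertex in each of the four level-combinations $(p,q)$, and the all-or-nothing behaviour of the apexes $w_0, w_1, w_2$. The clique case $(p,q) = (0,0)$ is the only one needing the extra within-slice inequality, and it is also the case in which the iterative folding of the earlier lemmas is unavailable, since that relied on property $\mathcal{P}$ and $M(K_n)$ is not perfect; the single-step construction above sidesteps this by using the fact that the two distinguished indices $i_0 \ne i_1$ already dominate every neighbor of the slice.
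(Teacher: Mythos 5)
Your proposal is correct and follows essentially the same route as the paper's proof: the identical one-step definition of $\tilde f$ (constant value $a$ on the slice, $f$ elsewhere), followed by the observation that any neighbor of a slice vertex misses at most one vertex of the slice---so, since $i_0 \neq i_1$, it is adjacent to $(i_0,p,q)$ or $(i_1,p,q)$, forcing $g(y) \neq a$. The only difference is organizational: you isolate this adjacency fact as a uniform claim covering the apexes and the bipartite-minus-matching levels at once, whereas the paper verifies it inline by a case analysis on the type of neighbor $y$; the within-slice clique case for $(p,q)=(0,0)$ is handled the same way in both.
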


\begin{proof}
	
	Define 	$\tilde{f} : V(M(M(K_n))) \to [m]$  by 
	\begin{center}
		$\tilde{f}(x)= \begin{cases}
		f_{p,q}(i_0), & \text{if}  \ x = (j,p,q) \ \text{for some} \ j \in [n],\\
		f(x), & \text{otherwise}. \\
		\end{cases}$
	\end{center}
	
	Let $h \in N_{K_m^{M(M(K_n))}}(f)$.   We show that $h \sim \tilde{f}$.  Let $x = (j_0, p, q)$ for some $j \in [n]$ and let $y \sim x$. Observe that, to prove  $h \sim \tilde{f}$, it is enough to show that $h(x) \neq \tilde{f}(y)$ and $h(y) \neq \tilde{f}(x) = f_{p,q}(i_0) $. If $y = (j, p,q)$ for some $j \in [n]$, then $\tilde{f}(y) = f_{p,q}(i_0) $. Since, $y \sim x$, we conclude that $x$ is adjacent to either $(i_0,p,q)$ or $(i_1,p,q)$. Hence,  $h \sim f$ and $f((i_0, p,q)) = f((i_1, p,q))$ implies that  $h(x) \neq f((i_0,p,q)) = f_{p,q}(i_0) = \tilde{f}(y)$.  If $y \neq (j,p,q)$ for all $j \in [n]$, then $\tilde{f}(y) = f(y)$ and therefore $h(x) \neq \tilde{f}(y)$. Thus, $h(x) \neq \tilde{f}(y)$.
	
	 If $y = w_i$  for some $i \in \{0,1,2\}$, the since $y \sim x$, we see that $y \sim (i_0, p,q)$. Hence, $f \sim h$ implies that $h(y) \neq f((i_0, p,q)) = f_{p,q}(i_0)$. So, assume that $y \neq w_i$ for all $i \in \{0,1,2\}$. For any $i \neq j$, since $i \sim j $ in $K_n$, we observe that either $y \sim (i_0, p, q)$ or $y \sim (i_1, p,q)$. Since, $f \sim h$ and  $f((i_0,p,q))= f((i_1,p,q))$, we conclude that $h(y) \neq f_{p,q}(i_0)$. Thus, $h(y) \neq \tilde{f}(x)$.
\end{proof}

\begin{lem} \label{nbdmlessn}
	Let $m \leq n$ and let  $f \in V(K_m^{M(M(K_n))})$. If  there exist $p, q \in \{0,1\}$ such that  $f_{p,q}$  is a  graph homomorphism from $K_n$ to $K_m$, then $N_{K_m^{M(M(K_n))}}(f) = \emptyset$.
\end{lem}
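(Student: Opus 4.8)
The plan is to assume for contradiction that there is some $h \in N_{K_m^{M(M(K_n))}}(f)$ and to exploit the fact that adjacency in the exponential graph forces many inequalities: $h \sim f$ means $h(u) \neq f(u')$ (and $h(u') \neq f(u)$) for every edge $(u,u')$ of $M(M(K_n))$, since adjacency in $K_m$ is just inequality of colours. First I would record the consequence of the hypothesis: as $f_{p,q}$ is a graph homomorphism from $K_n$ to $K_m$, it is injective, so $\mathrm{Im}\,f_{p,q}$ has $n$ elements; since $m \leq n$ this forces $m = n$ and $\mathrm{Im}\,f_{p,q} = [m]$. Thus the layer $\{(x,p,q) : x \in [n]\}$ is a ``rainbow'': under $f$ it realizes every colour in $[m]$. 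This is exactly the situation of Lemma~\ref{neighborhood}, and the argument will mirror it.

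The engine of the proof is the following observation, which I would isolate. Suppose $u$ is a vertex of $M(M(K_n))$ adjacent to every vertex of a layer $\{(x,a,b) : x \in [n]\}$ that is rainbow for one of the two maps $f$ or $h$. If the layer is rainbow for $f$, then $h(u) \neq f(x,a,b)$ for all $x$ forces $h(u) \notin [m]$, which is absurd as $h$ takes values in $[m]$; symmetrically if the layer is rainbow for $h$. So it suffices, in each case, to produce an apex-type vertex dominating the relevant rainbow layer. Reading off the adjacencies of the iterated Mycielskian, the top apex $w_2$ is adjacent to every $(x,0,1)$ and every $(x,1,1)$, while the intermediate apex $w_0 = (\ast,0)$ is adjacent to every $(x,1,0)$. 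Hence the three cases $(p,q) \in \{(0,1),(1,1),(1,0)\}$ are immediate: the rainbow layer $\{(x,p,q)\}$ is dominated by $w_2$, $w_2$, $w_0$ respectively, and the observation yields the contradiction at once.

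The remaining case $(p,q) = (0,0)$ is the main obstacle, since the bottom clique $\{(x,0,0)\}$ has no single vertex adjacent to all of it, so the rainbow cannot be cashed in directly and must first be propagated up one Mycielskian level. Using the cross edges $(x,0,0) \sim (y,0,1)$ for $x \neq y$, adjacency $h \sim f$ gives $h(y,0,1) \neq f(x,0,0) = f_{0,0}(x)$ for every $x \neq y$; since $f_{0,0}$ is a bijection onto $[m]$, the values $f_{0,0}(x)$ with $x \neq y$ exhaust $[m] \setminus \{f_{0,0}(y)\}$, forcing $h(y,0,1) = f_{0,0}(y)$. Thus $h_{0,1} = f_{0,0}$ is itself a rainbow, now for $h$. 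Since the layer $\{(x,0,1)\}$ is dominated by $w_2$, applying the observation to this $h$-rainbow (using $h(x,0,1) \neq f(w_2)$ for all $x$) forces $f(w_2) \notin [m]$, a contradiction. This exhausts all four cases, giving $N_{K_m^{M(M(K_n))}}(f) = \emptyset$. The only delicate points I anticipate are keeping track of the direction of the inequalities coming from the exponential-graph adjacency and correctly unwinding the adjacencies of the doubly iterated Mycielskian in order to locate the dominating apex for each layer.
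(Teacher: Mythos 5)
Your proof is correct and takes essentially the same approach as the paper's: force $m=n$ so that the layer $\{(x,p,q)\}$ is rainbow, then cash this in at a dominating apex ($w_2$ when $q=1$, $w_0$ when $(p,q)=(1,0)$), with one propagation step via cross edges in the remaining case $(p,q)=(0,0)$. The only (immaterial) difference is that in that last case you propagate the rainbow to the layer $\{(x,0,1)\}$ via the edges $(x,0,0)\sim(y,0,1)$ and conclude with $w_2$, whereas the paper propagates to $\{(x,1,0)\}$ via $(i,1,0)\sim(j,0,0)$ and concludes with $w_0$; both variants are valid.
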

\begin{proof}	Since $f_{p,q}$ is a graph homomorphism from $K_n$ to $K_m$ and $m \leq n$, we conclude that $m  = n$ and  $\text{Im} \  f_{p,q} = [m]$.  Suppose $N_{K_m^{M(M(K_n))}} (f) \neq \emptyset$ and let  $h \in N_{K_m^{M(M(K_n))}}(f)$. Let us first assume that $q = 1$. Then,  $w_{2} \sim (j,p,q)$ for all $j \in [n]$. Hence, $f \sim h$ implies that $h(w_{2}) \notin \text{Im} \ f_{p,q} = [m]$, which is a contradiction.
	
	Now, let $q = 0$. If $p = 1$, then since $w_0 \sim (j,1,0)$ for all $j \in [n]$, we conclude that $h(w_0) \notin \text{Im} \ f_{p,q} = [m]$, a contradiction. So, assume  $p = 0$.  For any $i \neq j$, since  $(i,1,0) \sim (j,0,0)$ and $f \sim h$, we have $h_{1,0}(i) \neq f_{0,0}(j)$. Hence, $h_{1,0}(i) = f_{0,0}(i)$ for all $i$. Now, since $w_0 \sim (i, 1, 0)$ for all $i$, we see that $f(w_0) \notin$ \text{Im} $h_{1,0} = \text{Im} \ f_{0,0} = [m]$, which is a contradiction.
\end{proof}

\begin{proof}[Proof of Theorem \ref{doubesharp}]
	Using Lemma \ref{folddouble}, $K_m^{M(M(K_n))}$ is folded onto an induced subgraph  $\frG_1$, where $f \in V(\frG_1)$ if and only if, for each $p, q \in \{0,1\}$ the map $f_{p,q}$  is either a constant map or a graph homomorphism from $ K_n$  to $K_m$. Let $\frG_2$ be the induced subgraph of $\frG_1$, where $f \in V(\frG_2)$ if and only if $f_{p,q}$ is a constant map for all $p, q \in \{0,1\}$.   From  Lemma \ref{nbdmlessn}, $N_{\frG_1}(f) = \emptyset$ for all $f$ such that $f_{p,q}$ is non-constant for some $p,q \in \{0,1\}$. Hence, $\N(\frG_1) \simeq \N(\frG_2)$.  It is  easy to see that   $\frG_2 \cong K_m^{M(L_2(\{0\}))}$. Observe that $M(L_2(\{0\}))$ is folded onto $L_1(\{0\})$ and therefore $K_2 \times M(L_2(\{0\}))$ is folded onto $K_2 \times L_1(\{0\})$. Since, $K_2 \times L_1(\{0\})$ is folded onto  a subgraph isomorphic to $K_2$, we have, 
	
	$\N(K_m^{M(M(K_n))}) \simeq  \N(\frG_1) \simeq \N(\frG_2) \simeq \N(K_m^{M(L_2(\{0\}))})  \simeq $ Hom$(K_2, K_m^{M(L_2(\{0\}))}) \simeq$ Hom $(K_2 \times M(L_2(\{0\})), K_m) \simeq $ Hom$(K_2 \times L_1(\{0\}), K_m) \simeq $ Hom$(K_2, K_m) \simeq \N(K_m) \simeq \mathbb{S}^{m-2}$. 
	
	Result follows from  Theorem \ref{doublenew}.
	\end{proof}

\subsection{Proofs of the results of Section \ref{subsec:testgraph}}

As in the previous Section, let $m$ and $n$ be positive integers such that $m \leq n+1$. For any  $f \in V(K_m^{M(M(K_n))})$  and $i, j \in \{0,1\}$, we define $f_{i,j}: [n] \to [m]$ by $f_{i,j}(x) = f(x,i,j)$ for all $x \in [n]$.   For $i \in \{0,1\}$,  $w_i= (\ast, i) \in V(M(M(K_n)))$, where $\ast \in V(M(K_n))$ obtained by identifying all the vertices whose second coordinate is $2$ and  $w_2 \in V(M(M(K_n)))$ is obtained by identifying all the vertices  of the form $(x,2)$, where $x \in V(M(K_n))$.

%
%
%

   Let $\frG$ be the induced subgraph of $K_m^{M(M(K_n))}$ where $f \in V(\frG)$ if and only if, for any $i,j \in \{0,1\}$ the map $f_{i,j}$  is either a constant map or a graph homomorphism from $ K_n$  to $K_m$. 

We first prove few lemmas, which we need in the proof of Theorem \ref{doublenew}.

\begin{lem}  \label{generalfold}	Let $f \in V(\frG)$ such that $f_{0,0}$ and $f_{1,0}$ are graph homomorphisms. 
	\begin{itemize}
		\item[(i)] If $f_{0,0}(i_0) = f_{1,0}(j_0)$ for some $i_0 , j_0\in [n], i_0 \neq j_0$, then  there exists $\tilde{f} \in V(\frG)$ such that Im $\tilde{f}_{1,0} = \{f_{0,0}(i_0)\}$ and  $N_{\frG}(f) \subseteq N_{\frG}(\tilde{f})$.
		\item[(ii)] 	If Im $f_{0,0}  = Im \ f_{1,0}$ and $f_{0,0}(i) \neq f_{1,0}(i)$ for some $i \in [n]$, then there exists $\tilde{f} \in V(\frG)$ such that $Im \ \tilde{f}_{1,0} = \{f_{0,0}(i)\}$ and $N_{\frG}(f) \subseteq {N_{\frG}(\tilde{f})}$.
		\item[(iii)] If Im $f_{0,0} \neq Im \ f_{1,0}$ and  $f_{0,0}(i) \neq f_{1,0}(i)$, $f_{0,0}(j) \neq f_{1,0} (j)$ for some $i, j \in [n], i \neq j$, then there exists $\tilde{f} \in V(\frG)$ such that $\tilde{f}_{1,0}$ is a constant map  and $N_{\frG}(f) \subseteq N_{\frG}(\tilde{f})$. 
		\end{itemize}
	\end{lem}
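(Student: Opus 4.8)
The plan is to treat all three parts uniformly by \emph{folding the layer} $\{(x,1,0):x\in[n]\}$ to a constant, exactly in the spirit of Lemmas~\ref{folddouble} and \ref{foldgeneral}. In each case I define $\tilde f$ to agree with $f$ off this layer and to be identically equal to a constant $c\in[m]$ on it, with $c$ chosen according to the case. Since only $\tilde f_{1,0}$ is changed and it is now constant, $\tilde f\in V(\frG)$ automatically. To prove $N_{\frG}(f)\subseteq N_{\frG}(\tilde f)$ I take $h\in N_{\frG}(f)$ and check $h\sim\tilde f$. Because $f$ and $\tilde f$ differ only on layer $(1,0)$, and because the vertices $(x,1,0)$ form an independent set in $M(M(K_n))$ whose neighbors are exactly the vertices $(x',0,0)$ and $(x',0,1)$ (all $x'$) together with $w_0,w_1$, the relation $h\sim\tilde f$ reduces to the single requirement
\[
c\notin \mathrm{Im}\,h_{0,0}\cup \mathrm{Im}\,h_{0,1}\cup\{h(w_0),h(w_1)\};
\]
the complementary inequalities $h(x,1,0)\neq\tilde f(y)=f(y)$ are immediate from $h\sim f$.

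The next step is to read off the constraints that $h\sim f$ imposes. If $N_{\frG}(f)=\emptyset$ there is nothing to prove, so assume $h$ exists. The edges $(x,1,0)\sim w_0$ and $(x,1,0)\sim w_1$ give $h(w_0),h(w_1)\notin \mathrm{Im}\,f_{1,0}$; as $f_{1,0}$ is an injective homomorphism $K_n\to K_m$ and $m\leq n+1$, this forces $m=n+1$ (the case $m=n$ being covered by Lemma~\ref{nbdmlessn}). Let $\alpha$ and $\beta$ be the unique colors missing from $\mathrm{Im}\,f_{1,0}$ and $\mathrm{Im}\,f_{0,0}$; then $h(w_0)=h(w_1)=\alpha$. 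Running the same bookkeeping on the edges joining $(x',0,0)$ and $(x',0,1)$ to layers $(0,0)$ and $(1,0)$, and using that both $f_{0,0}$ and $f_{1,0}$ are homomorphisms, I obtain for every $x'$
\[
h_{0,0}(x'),\,h_{0,1}(x')\in I(x'):=\{\alpha,f_{1,0}(x')\}\cap\{\beta,f_{0,0}(x')\}.
\]
Hence it suffices to pick $c$ with $c\neq\alpha$ and $c\notin\bigcup_{x'}I(x')$.

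With this reduction each part is a short computation on $I(x')$. In (i) the value $c=f_{0,0}(i_0)=f_{1,0}(j_0)$ lies in both images, so $c\neq\alpha,\beta$, and $c\in I(x')$ would force $x'=j_0$ and $x'=i_0$ simultaneously, impossible as $i_0\neq j_0$. Part (ii) is the same with $\alpha=\beta$ (forced by $\mathrm{Im}\,f_{0,0}=\mathrm{Im}\,f_{1,0}$): for $c=f_{0,0}(i)$, membership $c\in I(x')$ would give $x'=i$ and hence $c=f_{1,0}(i)$, contradicting $f_{0,0}(i)\neq f_{1,0}(i)$. I expect part (iii) to be the main obstacle. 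There $\mathrm{Im}\,f_{0,0}\neq\mathrm{Im}\,f_{1,0}$ forces $\alpha\neq\beta$, and the crucial point is that $I(x')\neq\emptyset$ for every $x'$ (which must hold since $h$ exists) forces $f_{0,0}$ and $f_{1,0}$ to agree off the two indices $p:=f_{0,0}^{-1}(\alpha)$ and $q:=f_{1,0}^{-1}(\beta)$; the two prescribed disagreement points $i\neq j$ are therefore exactly $\{p,q\}$. Computing $\bigcup_{x'}I(x')$ from this structure shows that $f_{0,0}(q)\notin\bigcup_{x'}I(x')$ and $f_{0,0}(q)\neq\alpha$ (since $f_{0,0}(p)=\alpha$ with $p\neq q$, while $\beta\notin\mathrm{Im}\,f_{0,0}$), so the constant $c:=f_{0,0}(q)$ does the job. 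The delicate part throughout is tracking the coincidence patterns among $\alpha,\beta,f_{0,0}(x'),f_{1,0}(x')$, but once the containment $h_{0,0}(x'),h_{0,1}(x')\in I(x')$ is established, all three parts collapse to verifying $c\notin\bigcup_{x'}I(x')$.
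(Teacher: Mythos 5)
Your proof is correct --- I verified the neighborhood structure claim (each $(x,1,0)$ is adjacent exactly to $(y,0,0)$, $(y,0,1)$ for $y\neq x$ and to $w_0,w_1$), the reduction of $h\sim\tilde f$ to the single condition $c\notin \mathrm{Im}\,h_{0,0}\cup\mathrm{Im}\,h_{0,1}\cup\{h(w_0),h(w_1)\}$, the containments $h_{0,0}(x'),h_{0,1}(x')\in I(x')$ together with $h(w_0)=h(w_1)=\alpha$, and all three choices of $c$ --- but it is organized quite differently from the paper's argument. The paper uses the same fold $\tilde f$, yet proves part (i) by a purely local check: with $c=f_{0,0}(i_0)=f_{1,0}(j_0)$, every neighbor $y$ of the layer $(1,0)$ is adjacent to at least one of the two vertices $(i_0,0,0)$, $(j_0,1,0)$ at which $f$ takes the value $c$, so $h\sim f$ forces $h(y)\neq c$ immediately; this needs no case split on $m$, no assumption $N_{\frG}(f)\neq\emptyset$, and no appeal to Lemma \ref{nbdmlessn}. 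Parts (ii) and (iii) are then short reductions to (i): one only has to produce a coincidence $f_{0,0}(i_0)=f_{1,0}(j_0)$ with $i_0\neq j_0$, which in (iii) comes from pigeonhole using $m\leq n+1$. You instead prove all three parts inside one color-avoidance framework, and in (iii) you exploit the existence of a common neighbor $h$ to pin down the structure of $f$ (that $f_{0,0}$ and $f_{1,0}$ agree off $\{p,q\}$) --- a fact the paper never needs; note that your constant $c=f_{0,0}(q)$ is precisely the coincidence value $f_{1,0}(p)$ that the paper's pigeonhole produces, so the two proofs of (iii) end at the same $\tilde f$. The trade-off: your framework is more systematic, since everything reduces to the single membership test $c\notin\{\alpha\}\cup\bigcup_{x'}I(x')$ and the sets $I(x')$ make transparent exactly how the hypothesis $m\leq n+1$ enters, whereas the paper's route is shorter and unconditional, with (i) holding for every $m$ and every $f$ without invoking the existence of $h$.
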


\begin{proof}  
	\begin{itemize}
		\item[(i)] 	Define 	$\tilde{f} : V(M(M(K_n))) \to [m]$  by 
		\begin{center}
			$\tilde{f}(x)= \begin{cases}
			f_{0,0}(i_0), & \text{if}  \ x = (j,1,0) \ \text{for some} \ j \in [n],\\
			f(x), & \text{otherwise}. \\
			\end{cases}$
		\end{center}
		Let $h \sim f$.  Let $x = (j, 1, 0)$ for some $j \in [n]$ and let $y \sim x$. Let $y = (y_1, p,q)$. Observe that  $p \in \{0,2\}$ and $q \in \{0,1\}$.  If $p = 2$, then $y = w_q$. In this case,  $y \sim (j_0,1,0)$ and therefore $f \sim h$ implies that 
		$h(y) \neq f((j_0,1,0)) = f_{1,0}(j_0) = f_{0,0}(i_0)$. Assume that $p = 0$. In this case, since $i_0 \neq j_0$, we see that  either $(y_1, 0, q) \sim (i_0,0,0)$ (if $y_1 \neq i_0$) or $(y_1, 0,q) \sim (j_0, 1,0)$ (if $y_1 \neq j_0$). Since, $f(i_0,0,0) = f(j_0,1,0) $, we derive  that $h(y) \neq f_{0,0}(i_0)$.  Hence, $h(y) \neq \tilde{f}(x)$.
		
		Since, $y \sim x$, $y \neq (i,1, 0)$ for any $i \in [n]$ and therefore $\tilde{f}(y) = f(y)$. Hence, $h(x) \neq \tilde{f}(y)$.  Now, from the definition of $\tilde{f}$, we conclude that $\tilde{f} \sim h$. Thus, $N_{\frG}(f) \subseteq N_{\frG} (\tilde{f})$.
		
		\item[(ii)] Since $\text{Im} \ f_{0,0} = \text{Im} \ f_{1,0}$ and $f_{0,0}(i) \neq f_{1,0}(i)$, we see that there exists $j \neq i$ such that $f_{1,0}(j) = f_{0,0}(i)$. Proof follows from $(i)$.
		
		\item[(iii)] 	Since $f_{0,0}$ is a graph homomorphism from $K_n$ to $K_m$, we see that $f_{0,0}(i) \neq f_{0,0}(j)$. Further, since $f_{1,0}$ is a graph homomorphism, $| \text{Im} \  f| = n$. Now, $m \leq n+1$ implies that $\{ f_{0,0}(i), f_{0,0}(j) \} \cap  \text{Im} \ f_{1,0} \neq \emptyset$.
		
		If $f_{0,0}(i) \in \text{Im} \ f_{1,0}$, then $f_{1,0}(p) = f_{0,0}(i)$ for some $p$. Since, $f_{0,0}(i) \neq f_{1,0}(i)$, we see that $p \neq i$. Similarly, if $f_{0,0}(j) \in \text{Im} \ f_{1,0} $ and $f_{0,0}(j) = f_{1,0}(q)$, then $q \neq j$. Proof follows from $(i)$. 
		\end{itemize}
	\end{proof}

\begin{lem} \label{neighborhodgeneral}
	Let $f \in V(\frG)$ such that $f_{0,0}$  is a  graph homomorphism.
	\begin{itemize}
	
		\item[(i)] Let $f_{1,0}$ is a graph homomorphism such that  $\text{Im} \ f_{1,0} = \text{Im} \ f_{0,0}$. If  $f(w_0) \notin Im \ f_{0,0}$, then $N_{\frG}(f) = \emptyset$.
		\item[(ii)] Let $f_{1,0}$ is a graph homomorphism, $\text{Im} \ f_{0,0} \neq$  Im $f_{1,0}$ and $f(w_0) \in \text{Im} \ f_{0,0}$. If $f_{0,0}$ and $f_{1,0}$ differ at only one vertex, then  $N_{\frG}(f) = \emptyset$. 
		\item[(iii)]  Let $f_{1,0}$ is a constant map such that  $ f_{1,0}(1) \notin$ \text{Im} $f_{0,0}$.  If $f(w_0) \in $ \text{Im} $f_{0,0}$, then $N_{\frG}(f) = \emptyset$.
		\end{itemize}
	\end{lem}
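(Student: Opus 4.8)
The plan is to argue by contradiction. Assume $N_{\frG}(f)\neq\emptyset$ and fix $h\in N_{\frG}(f)$; it will suffice to use that $h\sim f$ in $K_m^{M(M(K_n))}$, i.e.\ $h(u)\neq f(u')$ whenever $u\sim u'$. First I would observe that in each of the three items the hypotheses force $m=n+1$: since $f_{0,0}$ is a homomorphism it is injective, so $|\text{Im}\,f_{0,0}|=n$ and hence $m\ge n$, while the existence of a colour outside $\text{Im}\,f_{0,0}$ (items (i), (iii)) or the inequality $\text{Im}\,f_{0,0}\neq\text{Im}\,f_{1,0}$ (item (ii)) excludes $m=n$. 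Write $C_0=\text{Im}\,f_{0,0}$ and let $\beta$ denote the unique colour in $[m]\setminus C_0$.

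The combinatorial engine is a single vertex of the form $(x_\star,1,1)$. From the construction of $M(M(K_n))$ one checks the adjacencies that the proof needs: $(x,1,1)$ is adjacent precisely to $w_0$, to $w_2$, and to every $(y,0,0)$ with $y\neq x$; $(x,0,1)$ is adjacent to $w_2$ and to every $(y,0,0)$ and $(y,1,0)$ with $y\neq x$; and $w_1$ is adjacent to $w_2$ and to every $(y,1,0)$. Consequently the set of $f$-colours on the neighbours of $(x,1,1)$ is $(C_0\setminus\{f_{0,0}(x)\})\cup\{f(w_0),f(w_2)\}$, and this equals all of $[m]=C_0\cup\{\beta\}$ precisely when both $f_{0,0}(x)$ and $\beta$ lie in $\{f(w_0),f(w_2)\}$; in that situation $h(x,1,1)$ would have no admissible value, the contradiction I am after. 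Everything therefore reduces to determining $f(w_2)$ and then selecting $x_\star$.

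For item (i) I would use $w_1\sim(y,1,0)$ together with $\text{Im}\,f_{1,0}=C_0$ to force $h(w_1)\notin C_0$, i.e.\ $h(w_1)=\beta$; the edge $w_1\sim w_2$ then gives $f(w_2)\neq\beta$, so $f(w_2)\in C_0$. Here $f(w_0)=\beta$ by hypothesis, so taking $x_\star=f_{0,0}^{-1}(f(w_2))$ makes the displayed colour set equal $(C_0\setminus\{f(w_2)\})\cup\{\beta,f(w_2)\}=[m]$. For items (ii) and (iii) I would instead extract $f(w_2)$ from $h_{0,1}$: the edges $(x,0,1)\sim(y,0,0)$ give $h_{0,1}(x)\in\{f_{0,0}(x),\beta\}$, and the edges $(x,0,1)\sim(y,1,0)$ give $h_{0,1}(x)\neq f_{1,0}(y)$ for $y\neq x$. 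In item (iii), where $f_{1,0}\equiv\beta$, this immediately yields $h_{0,1}=f_{0,0}$, and then $w_2\sim(x,0,1)$ forces $f(w_2)\notin C_0$, i.e.\ $f(w_2)=\beta$; since $f(w_0)\in C_0$, choosing $x_\star=f_{0,0}^{-1}(f(w_0))$ makes the colour set $(C_0\setminus\{f(w_0)\})\cup\{f(w_0),\beta\}=[m]$.

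Item (ii) is where I expect the only real subtlety, and it is the step I would write most carefully. There $f_{0,0}$ and $f_{1,0}$ differ at a single vertex $x_1$, so the two edge conditions only pin $h_{0,1}(x)=f_{0,0}(x)$ for $x\neq x_1$; hence $w_2\sim(x,0,1)$ (for $x\neq x_1$) gives merely $f(w_2)\notin\{f_{0,0}(x):x\neq x_1\}$, which leaves two candidates for $f(w_2)$, namely $\beta$ and $\alpha:=f_{0,0}(x_1)$, the colour missing from $\text{Im}\,f_{1,0}$. The plan is to eliminate $\alpha$ exactly as in item (i): $w_1\sim(y,1,0)$ forces $h(w_1)=\alpha$, whereupon $w_1\sim w_2$ gives $f(w_2)\neq\alpha$ and hence $f(w_2)=\beta$, reducing (ii) to the same final count as (iii). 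The main obstacle is thus not any hard estimate but keeping straight which colour is forced at which apex; correctly identifying $f(w_2)$ — and, in (ii), discarding the spurious value through the $w_1$–$w_2$ edge — is the crux, after which the uniform counting at $(x_\star,1,1)$ closes all three items at once.
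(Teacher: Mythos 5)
Your proof is correct, and it reaches the contradiction through a genuinely different pivot than the paper. The paper also argues by contradiction from $h \sim f$, but it works on the $h$-side of each edge: it first pins down $h$ on entire levels ($h_{1,1}=f_{0,0}$ in (i), $h_{0,1}\in\{f_{0,0},f_{1,0}\}$ in (ii), $h_{0,1}=f_{0,0}$ in (iii)), splitting in (ii) and (iii) into sub-cases according to whether $h_{1,1}$ is a constant map or a graph homomorphism --- a step that uses $h\in V(\frG)$ --- and then concludes that $\text{Im}\ h_{0,1}\cup\text{Im}\ h_{1,1}\cup\{h(w_1)\}=[m]$, so that $f(w_2)$, which must avoid all of these colours, has no admissible value. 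You work on the $f$-side instead: you first determine $f(w_2)$ (via $h(w_1)$ or $h_{0,1}$ together with the edges $w_1\sim w_2$ and $(x,0,1)\sim w_2$), and then saturate the neighbourhood of a single vertex $(x_\star,1,1)$ with $f$-colours, so that $h((x_\star,1,1))$ has no admissible value; your adjacency lists for $(x,1,1)$, $(x,0,1)$ and $w_1$ are accurate, and each of the three eliminations (including the two-candidate step for $f(w_2)$ in (ii)) checks out. Your route buys two things: it removes the paper's sub-case analysis on $h_{1,1}$ entirely (your item (ii) is markedly shorter than the paper's), and since you never use that $h$ lies in $\frG$ --- only that $h\sim f$ in $K_m^{M(M(K_n))}$ --- you in fact prove the stronger statement $N_{K_m^{M(M(K_n))}}(f)=\emptyset$, from which the lemma follows since $N_{\frG}(f)\subseteq N_{K_m^{M(M(K_n))}}(f)$. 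One further cosmetic difference: for $m\le n$ the paper invokes Lemma \ref{nbdmlessn} to get the conclusion non-vacuously, whereas you observe that the hypotheses force $m=n+1$ (they are unsatisfiable when $m\le n$); both dispositions are valid.
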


\begin{proof}
	If $m \leq n$, then since $f_{0,0}$ is  a graph homomorphism, $N_{\frG}(f) = \emptyset$ from Lemma \ref{nbdmlessn}. So, assume that $m = n+1$.  Suppose $N_{\frG}(f) \neq \emptyset$ and let $h \in N_{\frG}(f)$.  Observe that, since $w_2 \sim (i,j,1)$ for all $1 \leq i, j \leq n$ and $h \sim f$, $f(w_2) \notin$ Im $h_{0,1} \cup$ Im $ h_{1,1} \cup h(w_1)$. Therefore,  Im $h_{0,1} \cup$ Im $h_{1,1} \cup h(w_1) \neq [m]$. 
	\begin{itemize}
	
		\item[(i)] Since $f(w_0) \notin \text{Im} \ f_{0,0}$, $|\text{Im} \ f_{0,0}| = n$ and $m = n+1$, we see that $\{f(w_0)\} = [m] \setminus \text{Im} \ f_{0,0}$.  Since, $(i,1,1) \sim (j,0,0)$ for $i \neq j$ and $h \sim f$, we see that $h((i,1,1)) \in \{f_{0,0}(i), f(w_0)\}$. For any $i \in [n]$, since $(i,1,1) \sim w_0$ , we conclude that $f(w_0) \neq h((i,1,1))$.  Hence, $h((i,1,1)) = f((i,0,0))$ for all $i$, i.e., $h_{1,1} = f_{0,0}$. Since $w_1 \sim (i, 1,0)$ for all  $i \in [n]$, $h \sim f$  implies that $h(w_1) \notin \text{Im} \ f_{1,0} = \text{Im} \ f_{0,0} = \text{Im} \ h_{1,1}$. Thus, $\text{Im} \ h_{1,1} \cup h(w_1) = [m]$, which implies that Im $h_{0,1} \cup$ Im $h_{1,1} \cup h(w_1) = [m]$, a contradiction. Therefore, $N_{\frG}(f) = \emptyset$.

		\item[(ii)]  
		Let $\{a\} = [m] \setminus \text{Im} \ f_{0,0}$. Since $\text{Im} \ f_{0,0} \neq \text{Im}  \ f_{1,0}$ and $|\text{Im} \  f_{1,0}| = n$,  there exists $i_0 \in [n]$ such that $f_{1,0}(i_0) = a$.  Further, since $f_{0,0}$ and $f_{1,0}$ differs at only one vertex, we conclude that $f_{0,0}(j) = f_{1,0}(j)$ for all $j \in [n], j \neq i_0$. 
		
		Since $(i,0,1) \sim (j,0,0)$ for all $i \neq j$ and $h \sim f$, we see that $h((i,0,1) )\in \{f_{0,0}(i), a\}$. If $i \neq i_0$, then since $(i,0,1) \sim (i_0, 1,0)$ and $f((i_0,1,0)) = f_{1,0}(i_0) = a$, we see that $h((i,0,1)) \neq f((i_0,1,0)) = a$. Hence, $h((i,0,1)) = f_{0,0}(i)$ for all $i \neq i_0$ and $h((i_0,0,1)) \in \{f_{0,0}(i_0), a\}$. Thus, $h_{0,1}$ is either $f_{0,0}$ or $f_{1,0}$.
		
		Let $h_{0,1} = f_{1,0}$. Since $w_1 \sim (i, 1,0)$ for all $i$, we see that $h(w_1) \notin \text{Im} \ f_{1,0}$ and therefore $\text{Im} \ h_{0,1} \cup h(w_1) = [m]$, which is a contradiction.  
		
		Now, let $h_{0,1} = f_{0,0}$. If $ h_{1,1} $ is constant map, then since $(i,1,1) \sim (j,0,0)$ for all $i \neq j$, $h \sim f$ implies that Im $  h_{1,1} = a$. Hence $\text{Im} \ h_{0,1} \cup \text{Im}  \ h_{1,1} = [m]$, a contradiction. So assume that  $h_{1,1}$ is non-constant. Since, $h \in V(\frG)$, $h_{1,1}$ is a graph homomorphism. For any $i \in [n]$, $w_0 \sim (i,1,1)$ and therefore  $f(w_0) \notin$ Im $ h_{1,1}$. Since, $f(w_0) \in$ Im $f_{0,0}$, we see that $f(w_0) \neq a$. Now, $ |$Im $h_{1,1} | = n$ implies that $a \in$ Im $ h_{1,1}$ and therefore Im $h_{0,1} \cup$  Im $h_{1,1} = [m]$, which is not possible. Hence, $N_{\frG}(f) = \emptyset$.
		
		\item[(iii)] Let $\{a\} = $ Im $f_{1,0}$. Then $a \notin $ Im $f_{0,0}$.   Since $(i,0,1) \sim (j,1,0)$ for all $i \neq j$, $h \sim f$ implies that $a \notin $ Im $h_{0,1}$. Further, since $(i,0,1) \sim (j,0,0)$ for all $i \neq j$, we conclude that $h_{0,1}(k) = f_{0,0}(k)$ for all $k \in [n]$. Hence, $h_{0,1} = f_{0,0}$.
		
		Let us first assume that  $h_{1,1}$ is a constant map and Im $h_{1,1} = \{b\}$. Observe that   $b \notin$ Im $f_{0,0}$. Thus,  $h_{0,1} = f_{0,0}$ implies that Im $h_{0,1} \cup \{b\} = [m]$, which is a contradiction. 
		
		Now, assume that $h_{1,1}$ is a graph homomorphism. Observe that $f(w_0) \notin $ Im $h_{1,1}$.  Since $f(w_0) \in \text{Im} \ f_{0,0}$, there exists $i_0$  such that $f_{0,0}(i_0) = f(w_0)$. Since, $(i_0, 1,1) \sim (j, 0,0)$, $h_{1,1}(i_0) \neq f_{0,0}(j)$ for all $j \neq i_0$. Hence, either $h_{1,1}(i_0) = f_{0,0}(i_0)$ or $h_{1,1}(i_0) = a$. But, since, $f_{0,0}(i_0) = f(w_0) \notin \text{Im} \ h_{1,1}$, we conclude that $h_{1,1}(i_0) = a$ i.e., $a \in$ Im $h_{1,1}$. Thus, Im $h_{0,1} \cup $ Im $h_{1,1} = \text{Im} \ f_{0,0} \cup \text{Im} \ h_{1,1} = [m]$, which is not possible. Hence, $N_{\frG}(f) = \emptyset$.

		\end{itemize}
	\end{proof}

\begin{proof}[Proof of Theorem \ref{doublenew}]
We first prove that, for any $m \leq n+1$, $\chi(K_m^{M(M(K_n))}) = m$.	From Lemma \ref{folddouble}, $K_m^{M(M(K_n))}$ is folded onto a subgraph $\frG$, where $f \in V(\frG) $ if and only if for any $i,j \in \{0,1\}$ the map $f_{i,j}$  is either a constant map or a graph homomorphism from $ K_n$  to $K_m$. 
	Using Lemma \ref{generalfold}, $\frG$ is folded onto the induced subgraph on vertex set $V(\frG_1)$, where  $f \in V(\frG_1)$ if and only if the following are true:
\begin{itemize}
	\item[(A)] if  Im $f_{0,0} = \text{Im} \ f_{1,0}$, then $f_{0,0} = f_{1,0}$ (from Lemma \ref{generalfold} $(ii)$).
	\item[(B)]  if $f_{0,0}, f_{1,0}$ are graph homomorphisms and Im $f_{0,0} \neq$ Im $f_{1,0}$, then $f_{0,0}$
	and $f_{1,0}$ differ at only one vertex (from Lemma \ref{generalfold} $(iii)$).
	\end{itemize}	

We define the subsets $U_1, U_2, U_3 \subseteq V(\frG_1)$ by 

$U_1 = \{f \in V(\frG_1) \ | \ f_{0,0}, f_{1,0} \ \text{are graph homomorphisms}, \text{Im} \ f_{0,0} = \text{Im}  \ f_{1,0} \ \text{and} \  f(w_0) \notin \text{Im} \ f_{0,0}\}$,

$U_2 = \{f \in V(\frG_1) \ | \ f_{0,0}, f_{1,0} \ \text{are graph homomorphisms}, \text{Im}\ f_{0,0} \neq \text{Im}  \ f_{1,0}, f(w_0) \in\text{Im}\ f_{0,0} \ \text{and}, f_{0,0}\ \text{and} \ f_{1,0} \ \text{differ at only one vertex}\}$ 
and

$U_3 = \{f \in V(\frG_1) \ | \ f_{0,0} \ \text{is a graph homomorphism}, \ f_{1,0} \ \text{is a constanta map}, \ f_{1,0}(1) \notin \text{Im} \ f_{0,0}\ \text{and} \ f(w_0) \in \ \text{Im} \  f_{0,0} \}$.

Let $\frG_2$ be the induced subgraph of $\frG_1$ on vertex set $V(\frG_1) \setminus (U_1 \cup U_2 \cup U_3)$. From Lemma \ref{neighborhodgeneral},   $N_{\frG}(f) = \emptyset$ for all $f \in U_1 \cup U_2 \cup U_3$. Since $N_{\frG_1}(f) \subseteq N_{\frG}(f)$, we  see that $N_{\frG_1}(f) = \emptyset$  for all $f \in U_1 \cup U_2 \cup U_3$. Thus, we conclude  that $\chi(\frG_1) = \chi(\frG_2)$.

We define $\phi: V(\frG_2) \to [m]$ by 
	\begin{center}
	$\phi(f)= \begin{cases}
	f_{0,0}(1), & \text{if}  \ f_{0,0} \ \text{is a constant map},\\
	f_{1,0}(1), & \text{if} \ f_{0,0} \ \text{is a graph homomorphism and} \ f_{1,0}\  \text{is a constant map},\\
	f(w_0), & \text{if} \ f_{0,0} \ \text{and} \ f_{1,0} \ \text{are graph homomorphisms}.\\
	\end{cases}$
\end{center}

Clearly, $\phi$ is well defined. We show that $\phi$ is a graph homomorphism from $\frG_2$ to $K_m$.
Let $(f, h) \in E(\frG_2)$. Let us first assume that $m \leq n$. In this case, from Lemma \ref{nbdmlessn}, $N_{\frG_2}(g) = \emptyset$ for all $g$ such that $g_{0,0}$ is a graph homomorphism. Hence, $f_{0,0}$ and $h_{0,0}$ must be constant maps. Here, $\phi(f) = f_{0,0}(1)$  and $\phi(h) = h_{0,0}(1)$.  Since, $(1,0,0) \sim (2,0,0)$, $f_{0,0}(1) = f((1,0,0)) \neq h((2,0,0)) = h((1,0,0)) = h_{0,0}(1)$. Hence $\phi(f) \neq \phi(h)$. 

So, assume that $m = n+1$. We consider the following cases.

{\it Case 1.}  $f_{0,0}$ is a constant map.
	 
In this case $\phi(f) = f_{0,0}(1)$. If $h_{0,0}$ is a constant map, then $\phi(h) = h_{0,0} (1)$. Since $(1,0,0) \sim (2,0,0)$, $f((1,0,0)) \neq h((2,0,0)) = h((1,0,0))$. Hence $\phi(f) \neq \phi(h)$. If $h_{0,0}$ is  a graph homomorphism  and $h_{1,0}$ is  a constant  map, then $\phi(h) = h_{1,0}(1)$. In this case, since $(1,0,0) \sim (2,1,0)$, we see that $f((1,0,0)) \neq h((2,1,0)) = h((1,1,0))$ and therefore $\phi(f) \neq \phi(h)$. Now, assume that both $h_{0,0}$ and $h_{1,0}$ are graph homomorphisms. 
Here, $\phi(h) = h(w_0)$.

{\it Subcase 1.1.}	Im $h_{0,0} =$ Im  $h_{1,0}$.

In this case from the condition $(A)$ given above, we conclude that $h_{0,0} = h_{1,0}$. Since, $h \notin U_1$, we see that $h(w_0) \in $ Im $h_{0,0}$. Since $f_{0,0}$ is a constant map and $h \sim f$, observe that 
$f_{0,0}(1) \notin $ Im $h_{0,0}$. Thus $\phi(f) = f_{0,0}(1) \neq h(w_0) = \phi(h)$.

{\it Subcase 1.2.} 	Im $h_{0,0} \neq $ Im $h_{1,0}$.

Since, $h_{0,0}$ and $h_{1,0}$ are graph homomorphisms, $m = n+1$ and  $\text{Im} \ h_{0,0} \neq \text{Im} \ h_{1,0}$, we see that $\text{Im} \ h_{0,0} \cup \text{Im} \ h_{1,0} = [m]$. From the condition $(B)$ given above, we conclude that $h_{0,0}$ and $h_{1,0}$ differ at only one vertex. Since, $h \notin U_2$, we see that $h(w_0) \notin$ Im $h_{0,0}$. Now, $\text{Im} \ h_{0,0} \cup \text{Im} \ h_{1,0} = [m]$ implies that $h(w_0) \in$ Im $h_{1,0}$. Since, $f_{0,0}$ is a constant map  and $(i,0,0) \sim (j,1,0)$ for all $i \neq j$, we see that
$f \sim h \implies f_{0,0}(1) \notin $ Im $h_{1,0}$. Hence, $\phi(f) \neq \phi(h)$.

	 {\it Case 2.} $f_{0,0}$ is a graph homomorphism and $f_{1,0}$ is a constant map.

In this case  $\phi(f)  = f_{1,0}(1)$.  If $h_{0,0}$ is a constant map then $\phi(h) = h_{0,0}(1)$.  Since $(i,0,0) \sim (j,1,0)$ for all $i \neq j$ and $f \sim h$, we conclude that $h_{0,0}(1) \neq f_{1,0}(1)$.

 Let $h_{0,0}$ is  a graph homomorphism and $h_{1,0}$ is a constant map. Here, $\phi(h) = h_{1,0}(1)$.  Since $(i,0,0) \sim (j,1,0)$ for all $i \neq j$ and $f \sim h$, we see that  $h_{1,0}(1) \notin $ Im $f_{0,0}$. If $f_{1,0}(1) \in$ Im $f_{0,0}$, then $\phi(h) \neq \phi(f)$. So assume that $f_{1,0}(1) \notin $ Im $f_{0,0}$.
Since $f \notin U_3$, $f(w_0) \notin $ Im $f_{0,0}$. Using the fact that $|\text{Im} \ f_{0,0}| = n$ and $m = n+1$, we conclude that $f(w_0) = f_{1,0}(1) = \phi(f)$. Since $w_0 \sim (i,1,0)$ for all $i$, we see that $h_{1,0}(1) \neq f(w_0)$. Thus $\phi(f) \neq \phi(h)$.

Now, let both $h_{0,0}$ and $h_{1,0}$ are graph homomorphisms. Here, $\phi(h) = h(w_0)$. Since $w_0 \sim (i,1,0)$ for all $i$, we have  $f_{1,0}(1) \neq h(w_0)$ and therefore  $\phi(f) \neq \phi(h)$.

{\it Case 3.}	Both $f_{0,0}$ and $f_{1,0}$ are graph homomorphisms.

In this case $\phi(f) = f(w_0)$. If $h_{0,0}$ is a constant map or $h_{1,0}$ is a constant map, then by reversing the roles of $h$ and $f$, we conclude from Case $1$ and Case $2$ that $\phi(f) \neq \phi(h)$. So assume that both $h_{0,0}$ and $h_{1,0}$ are graph homomorphisms. Here, $\phi(h) = h(w_0)$.

{\it Subcase 3.1.} Im $f_{0,0} = $ Im $f_{1,0}$.

From $(A)$, $f_{0,0} = f_{1,0}$.  Since $f \notin U_1$, $f(w_0) \in$ Im $f_{0,0}$. Further, since $w_0 \sim (i,1,0)$ for all $i$, we see that $h(w_0) \notin \text{Im}\ f_{1,0}$. Since $f_{0,0} = f_{1,0}$, $f(w_0) \in $ Im $f_{1,0}$. Hence $\phi(f) = f(w_0) \neq h(w_0) = \phi(h)$.

{\it Subcase 3.2.}
	Im $f_{0,0} \neq $ Im $f_{1,0}$.

From $(B)$, $f_{0,0}$ and $f_{1,0}$ differ at only one vertex.  Since, $f \notin U_2$, $f(w_0) \notin$ Im  $f_{0,0}$. Since Im $f_{0,0} \neq $ Im $f_{1,0}$, we see that $f(w_0) \in$ Im $f_{1,0}$. Observe that $h(w_0) \notin $ Im $f_{1,0}$. Hence $\phi(f) \neq \phi(h)$. 

Thus $\phi(f) \sim \phi(h)$ in $K_m$. Therefore $\phi$ is a graph homomorphism. Hence, $\chi(\frG_2) \leq m$. Since $\frG_2$ contains a clique of order $m$, namely induced by constant maps, we conclude that
$\chi(\frG_2) = m$. Since folding preserve chromatic number, we have $\chi(K_m^{M(M(K_n))}) = \chi(\frG) = \chi(\frG_1) = \chi(\frG_2) = m$.

Now, result follows from the fact  that  we have graph homomorphism $i^{\ast} : K_m^G \to K_m^{M(M(K_n))}$ defined by $\phi(f)(x)= f(x)$ and $K_m^G$ has a subgraph isomorphic to the complete graph $K_m$, namely induced by all constant maps.
	\end{proof}

In \cite{TM}, Matsushita proved the following sufficient condition for a graph to be a homotopy test graph.

\begin{thm} \cite[Theorem $7$]{TM} \label{retract} 	Let $A$ be a graph of chromatic number  $n$. If $A$ has a subgraph isomorphic to $K_n$, then $A$ is a homotopy test graph.
\end{thm}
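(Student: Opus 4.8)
The plan is to reduce the statement to the known fact, due to Babson and Kozlov, that every complete graph is a homotopy test graph, by exhibiting $\mathrm{Hom}(K_n, G)$ as a retract of $\mathrm{Hom}(A, G)$ for every graph $G$. Concretely, writing $n = \chi(A)$, I must verify that $\chi(G) \geq n + conn(\mathrm{Hom}(A,G)) + 1$ for all $G$. Since $K_n$ is a homotopy test graph we already have $\chi(G) \geq n + conn(\mathrm{Hom}(K_n,G)) + 1$, so it suffices to establish the single connectivity inequality $conn(\mathrm{Hom}(K_n,G)) \geq conn(\mathrm{Hom}(A,G))$.

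To produce the retract, I would first use the two homomorphisms that the hypotheses supply: a proper $n$-colouring $c \colon A \to K_n$, which exists because $\chi(A) = n$, and the inclusion $i \colon K_n \hookrightarrow A$ of the given $K_n$-subgraph. Since $\mathrm{Hom}(-,G)$ is contravariantly functorial in its first argument, these induce cellular maps $c^{*} \colon \mathrm{Hom}(K_n, G) \to \mathrm{Hom}(A, G)$ and $i^{*} \colon \mathrm{Hom}(A,G) \to \mathrm{Hom}(K_n, G)$ on geometric realizations. The composite $i^{*} \circ c^{*}$ equals $(c \circ i)^{*}$, where $c \circ i \colon K_n \to K_n$ is a graph homomorphism from a complete graph to itself; because any such homomorphism sends pairwise adjacent vertices to distinct ones, it is injective, hence a bijection, and therefore an automorphism of $K_n$.

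The key point is then that an automorphism of $K_n$ induces an isomorphism of polyhedral complexes $\mathrm{Hom}(K_n,G) \to \mathrm{Hom}(K_n,G)$, hence a homeomorphism on realizations; in particular $(c\circ i)^{*}$ is invertible. Setting $r = ((c\circ i)^{*})^{-1} \circ i^{*}$ gives $r \circ c^{*} = \mathrm{id}_{\mathrm{Hom}(K_n,G)}$, so $c^{*}$ is a section and $\mathrm{Hom}(K_n,G)$ is a retract of $\mathrm{Hom}(A,G)$. A retract of a $k$-connected space is $k$-connected, since the section injects each homotopy group of the retract as a subgroup of the corresponding group of the ambient space (using $r_{*}\circ c^{*}_{*} = \mathrm{id}$), and these vanish up to degree $k$. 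Hence $conn(\mathrm{Hom}(K_n,G)) \geq conn(\mathrm{Hom}(A,G))$, and combining this with the test-graph inequality for $K_n$ yields $\chi(G) \geq n + conn(\mathrm{Hom}(A,G)) + 1$ for every $G$, which is exactly the assertion that $A$ is a homotopy test graph.

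The functoriality of $\mathrm{Hom}(-,G)$ and the retract/connectivity lemma are standard from Kozlov's book and require no new work. The one place that needs care, and the main conceptual step, is the identification $i^{*}\circ c^{*} = (c\circ i)^{*}$ together with the observation that $c\circ i$ is forced to be an automorphism of $K_n$: this is what upgrades an a priori one-sided comparison of the two Hom complexes into a genuine retraction, and it is precisely where the hypothesis that $A$ contains $K_n$ while $\chi(A)=n$ is used in full.
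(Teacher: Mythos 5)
Your proof is correct. The paper itself gives no proof of this statement---it is quoted from Matsushita \cite[Theorem 7]{TM} and used as a black box in the proof of Corollary \ref{testgraph}---and your argument (functoriality of $\mathrm{Hom}(-,G)$ applied to the colouring $A \to K_n$ and the inclusion $K_n \hookrightarrow A$, the observation that the composite is an automorphism of $K_n$ so that $\mathrm{Hom}(K_n,G)$ becomes a retract of $\mathrm{Hom}(A,G)$, the connectivity of retracts, and the Babson--Kozlov theorem that complete graphs are homotopy test graphs) is essentially the proof given in that source, so you have in effect reconstructed the standard argument.
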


\begin{proof}[Proof of Corollary \ref{testgraph}] Since, $K_m^G$ has a subgraph isomorphic to $K_m,$ result follows from  Theorem \ref{doublenew} and Theorem  \ref{retract}.
	
\end{proof}

\subsection{Proof of the result of Section \ref{hedetnimi}}

\begin{proof}[Proof of Theorem \ref{Hedet}]
	
	In \cite{ZS}, El-Zahar and Sauer (see also Proposition $2.2$, \cite{CT} showed that the Conjecture \ref{Hedetniemi} is equivalent to the statement that $\chi(K_m^A) = m$ for all $m < \chi(A)$. 
		They first proved and  then used the fact that for any graph homomorphism $\phi: A \times B \to K_m$, there are induced graph homomorphisms $\phi_A : A \to K_m^B$ defined by $\phi_A(a) (b)= \phi(a,b)$ and $\phi_B: B \to K_m^A$ defined by $\phi_B(b)(a) = \phi(a,b)$. We use this fact in this proof.
	
%
	
Since $\chi(M(M(K_n))) = n+2$, we see that $\chi(G) \geq n+2$.  Let  $H$ be a graph such that $\chi(G \times H) = n+1$. 	Let $\psi: G \times H \to K_{n+1}$ be  a graph homomorphism. From Theorem \ref{doublenew}, $\chi(K_{n+1}^G) = n+1$. Since, we have the  graph homomorphism $\psi_H: H \to K_{n+1}^G$, $\chi(H) \leq n+1$.  Using the fact that  $\chi(G \times H) \leq \chi(H)$, we conclude that  $\min \{\chi(G), \chi(H)\}  = \chi(H) = n+1$.
\end{proof}

\bibliographystyle{plain}

\end{document}